\newtheorem{theorem}{Theorem}[section]
\newtheorem{proposition}[theorem]{Proposition} 
\newtheorem{corollary}[theorem]{Corollary}
\newtheorem{conjecture}[theorem]{Conjecture}
\newtheorem{openquestion}[theorem]{Open question}
\newtheorem{lemma}[theorem]{Lemma}
\newtheorem{definition1}[theorem]{Definition}
\newtheorem{remark1}[theorem]{Remark}
\newcommand{\somme}[3]{\sum_{#1}^{#2} #3}
\newcommand{\indi}[1]{\mathds{1}_{#1}}
\newcommand{\Pf}{\mathbb{P}}
\newcommand{\inte}[4]{\int_{#1}^{#2} #3 \, \mathrm{d}#4}
\newmdenv[
  roundcorner=5pt, 
  frametitlebackgroundcolor=gray!0, 
  frametitlerule=false, 
  frametitlerulewidth=1pt, 
  frametitlerulecolor=black, 
  frametitlefont=\bfseries\large, 
  backgroundcolor=gray!0, 
  linewidth=1.5pt, 
  innertopmargin=0pt, 
  innerbottommargin=10pt, 
  innerleftmargin=10pt, 
  innerrightmargin=10pt, 
  frametitlealignment=\centering, 
  topline=false, 
  rightline=false, 
  bottomline=false, 
  leftline=true, 
    leftmargin=-0.3cm, 
  rightmargin=0cm
]{boxproof}
\author{Timothy Budd\footnote{IMAPP, Radboud University, Nijmegen, t.budd@science.ru.nl} \hspace{0.3cm}and \hspace{0.15cm} Tanguy Lions\footnote{ENS Lyon, tanguy.lions@ens-lyon.fr}}
\date{}
\title{\textbf{The tight length spectrum of large-genus random hyperbolic surfaces with many cusps}}
\begin{document}

\maketitle
\begin{abstract}
    Since the work of Mirzakhani \& Petri \cite{Mirzakhani_petri_2019} on random hyperbolic surfaces of large genus, length statistics of closed geodesics have been studied extensively. We focus on the case of random hyperbolic surfaces with cusps, the number $n_g$ of which grows with the genus $g$. We prove that if $n_g$ grows fast enough and we restrict attention to special geodesics that are \emph{tight}, we recover upon proper normalization the same Poisson point process in the large-$g$ limit for the length statistics. The proof relies on a recursion formula for tight Weil-Petersson volumes obtained in \cite{budd2023topological} and on a generalization of Mirzakhani's integration formula to the tight setting.
\end{abstract}
\begin{center}
   \begin{figure}[H]
    \centering
\includegraphics[scale = 0.45]{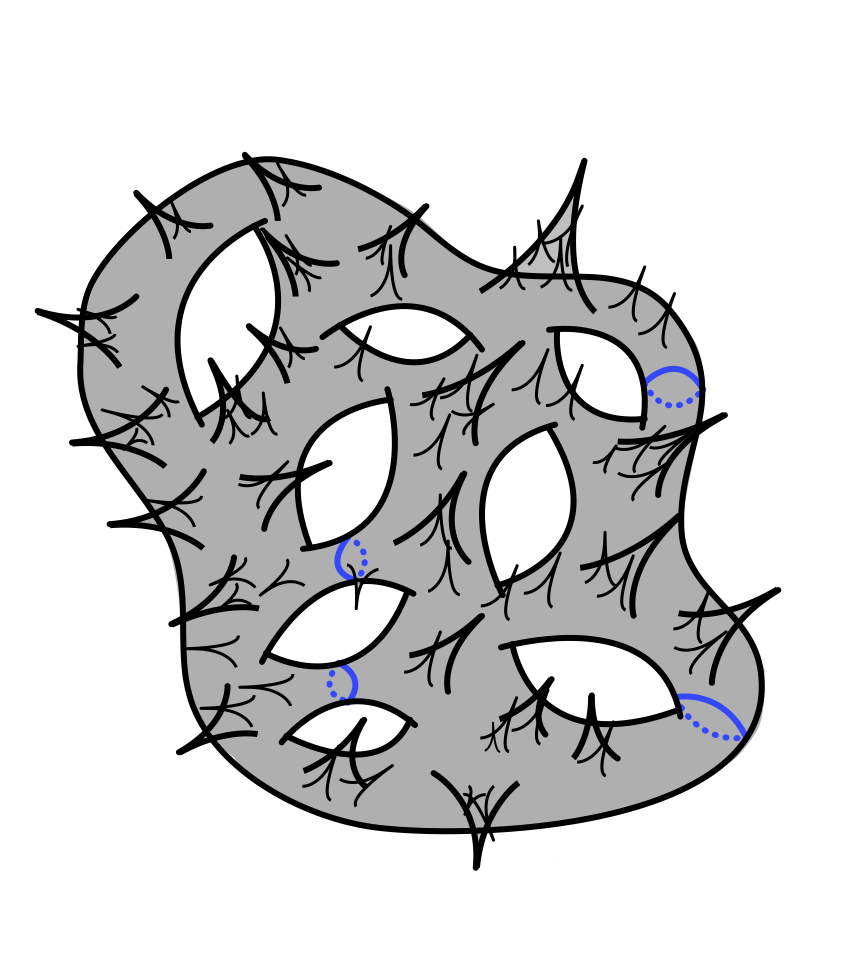}
\caption{A hyperbolic surface of genus $8$ with many cusps. The blue curves are the $4$ shortest \emph{tight} geodesics on the surface.}
    \label{fig:enter-label}
\end{figure} 
\end{center}
\maketitle
\section{Introduction}
\paragraph{Random hyperbolic surfaces.}
Random hyperbolic surfaces of large genus have been extensively studied recently (see~\cite{Magee_2022,Toward_spectral_gap,Wu_2022,Mirzakhani_petri_2019}). These advances have been made possible thanks to a good understanding of the Weil-Petersson volumes $V_{g,n}(\mathbf{L})$ of the moduli space of genus-$g$ hyperbolic surfaces with $n$ geodesic boundaries of length $\mathbf{L} = (L_1,\cdots,L_n) \in \mathbb{R}_{\ge 0}^n$ as $g \to +\infty$. For fixed genus $g$, there are few works studying the case of random hyperbolic surfaces of genus $g$ with $n$ cusps (see~\cite{hide2024largen}). The recent work \cite{budd2023topological} by Zonneveld and one of the authors provides a new framework to deal with the situation where the number of cusps is random. For $\mu \ge 0$, denote $F_{g}(\mu) = \somme{n=0}{\infty}{\frac{\mu^n}{n!}V_{g,n}}$, where $V_{g,n} = V_{g,n}(0)$. 
By \cite[Theorem~6.1]{manin1999invertible}, the generating function $F_{g}(\mu)$ is finite if and only if $\mu \leq \mu_c$ if $g=0$ or $\mu < \mu_c$ if $g\ge 1$, where $\mu_c\coloneqq \frac{j_0J_1(j_0)}{4\pi^2}=0.0316\ldots$ and $j_0$ is the first zero of the Bessel function of the first kind $J_0$. For $\mu < \mu_c$, we define $\Pf^{\mathrm{WP}}_{g,\mu}$, the \emph{$\mu$-Boltzmann probability measure} on hyperbolic surfaces of genus $g$, by first sampling $n$ with probability $\displaystyle F_{g}(\mu)^{-1}\frac{\mu^n}{n!}V_{g,n}$ and then chosing a surface $X$ under $\Pf^{\mathrm{WP}}_{g,n}$, the Weil-Petersson probability measure on hyperbolic surfaces of genus $g$ with $n$ cusps. This Boltzmann model is natural from the point of view of statistical physics. Also, from a probabilistic point of view, the model has better independence properties. We denote by $\mathcal{N}_{g,\mu}$ the number of cusps chosen.

\paragraph{Statistics of lengths of closed geodesics.}
 A closed geodesic $\gamma$ is said to be \emph{primitive} if it is not obtained by iterating a simple closed geodesic two or more times. The \emph{length spectrum} of a hyperbolic surface $X$ is the list of the ordered lengths $\ell_1 \le \ell_2 \le \cdots$ of closed geodesics on $X$. In the same way, we define the primitive length spectrum by considering only primitive closed geodesics. We denote by $\Lambda_g$ the random multiset of lengths of primitive closed geodesics of a surface $X$ sampled under $\Pf^{\mathrm{WP}}_g$. The recent work of Mirzakhani and Petri \cite{Mirzakhani_petri_2019} gives a full description of the large-genus asymptotic behaviour of the length spectrum for short geodesics.
 \begin{theorem}[Theorem~4.1 of \cite{Mirzakhani_petri_2019}]\label{Mirzakhani_Petri}\label{Length_spectrum_without_cusps}
     As $g \to \infty$, we have the convergence in distribution
     \begin{align*}
         \Lambda_g \xrightarrow[g \to \infty]{\mathrm{(d)}} \mathcal{P},
     \end{align*}
    where $\mathcal{P}$ is a Poisson point process on $\mathbb{R}_{+}$ with intensity $\frac{\cosh{t}-1}{t}\mathrm{d}t$. In the convergence, the random multiset $\Lambda_{g}$ is regarded as a random point process on $[0,\infty)$.
 \end{theorem}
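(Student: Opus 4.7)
My approach would be the classical one of \emph{factorial moments}. By Kallenberg's theorem on convergence of point processes, to show that the random multiset $\Lambda_g$ converges in distribution to a Poisson point process $\mathcal{P}$ of intensity $\lambda(t)\,dt := \frac{\cosh t - 1}{t}\,dt$, it suffices to prove that for every bounded interval $A \subset \mathbb{R}_+$ and every integer $k \ge 1$, the $k$-th factorial moment $\mathbb{E}\bigl[N_g^A (N_g^A - 1)\cdots(N_g^A - k + 1)\bigr]$ converges to $\bigl(\int_A \lambda(t)\,dt\bigr)^k$, where $N_g^A := |\Lambda_g \cap A|$ counts primitive closed geodesics of length in $A$.

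To compute these factorial moments, I would first observe that the length of any non-simple primitive closed geodesic is bounded below by a universal constant (via the collar lemma), so the contribution of non-simple primitives with length in a fixed $A$ vanishes as $g \to \infty$. Moreover, two simple closed geodesics of lengths in a short enough interval cannot intersect. Thus the relevant $k$-tuples in the factorial moment reduce, up to vanishing error, to ordered $k$-tuples of pairwise disjoint simple closed geodesics. Mirzakhani's integration formula then rewrites the factorial moment as
\begin{equation*}
\sum_{[\Gamma]} \frac{1}{|\mathrm{Sym}(\Gamma)|} \int_{A^k} L_1 \cdots L_k \cdot V_{g,\Gamma}(L_1,\ldots,L_k)\,dL_1 \cdots dL_k,
\end{equation*}
where the sum runs over mapping-class-group orbits of ordered $k$-tuples of disjoint simple closed curves, and $V_{g,\Gamma}$ is the Weil-Petersson volume of the moduli space of hyperbolic structures on the (possibly disconnected) surface obtained by cutting along $\Gamma$.

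The key step is to isolate the \emph{dominant} topological type. By the Mirzakhani-Zograf volume estimates, the ratio $V_{g',n'}/V_g$ decays rapidly whenever the cut surface loses more genus than necessary, so only the configurations in which each $\gamma_i$ is non-separating and the union $\gamma_1 \cup \cdots \cup \gamma_k$ remains non-separating survive in the limit. For this dominant type, the complement is connected of signature $(g-k, 2k)$ with boundary lengths paired as $(L_1,L_1,\ldots,L_k,L_k)$ and stabilizer of order $2^k$. Using the Mirzakhani-Zograf asymptotic
\begin{equation*}
\frac{V_{g-k,\, 2k}(L_1, L_1, \ldots, L_k, L_k)}{V_g} \xrightarrow[g \to \infty]{} \prod_{i=1}^k \left(\frac{2\sinh(L_i/2)}{L_i}\right)^{\!2}
\end{equation*}
together with the identity $4\sinh^2(L/2) = 2(\cosh L - 1)$, the dominant contribution evaluates to exactly $\bigl(\int_A \frac{\cosh L-1}{L}\,dL\bigr)^k$ after cancellation against the $L_1 \cdots L_k$ weight and the $2^{-k}$ symmetry factor.

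The main obstacle is the uniform control of the subleading topological types: one must show that their total contribution tends to $0$ as $g \to \infty$. This requires combining polynomial bounds on $V_{g,n}(\mathbf{L})$ with explicit dependence on $g$ and $n$ with a combinatorial enumeration of topological types of $k$-tuples of disjoint simple closed curves, and verifying that the volume-ratio decay beats the number of topological types. Although $k$ is fixed and the enumeration is finite, the decay rate of $V_{g-j, 2j+m}/V_g$ must be made explicit across all $(j,m)$ corresponding to non-dominant configurations (separating components, curves bounding a pair of pants, etc.); the needed bounds are exactly those provided by Mirzakhani-Zograf.
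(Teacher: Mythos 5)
The paper does not prove this theorem; it is cited verbatim as Theorem~4.1 of Mirzakhani--Petri and used as a benchmark for the paper's main result. So there is no ``paper's own proof'' to compare against directly, but your plan is indeed the Mirzakhani--Petri strategy, and it parallels the structure the present paper adopts when proving its own Theorem~\ref{main_theorem} (factorial moments, Mirzakhani-type integration formula, isolating the non-separating topological type, and showing the separating and intersecting configurations are negligible).

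There is, however, a genuine gap in the step where you discard non-simple and intersecting geodesics. You claim that the collar lemma forces a universal lower bound on the length of any non-simple primitive closed geodesic, ``so the contribution of non-simple primitives with length in a fixed $A$ vanishes,'' and that ``two simple closed geodesics of lengths in a short enough interval cannot intersect.'' Neither of these geometric impossibilities is correct for a general bounded interval $A$. The collar lemma does give a fixed lower bound (roughly $2\,\mathrm{arcsinh}\,1$) for the systolic length of a self-intersecting closed geodesic, but once $A$ extends beyond that constant there is no deterministic obstruction to non-simple geodesics of length in $A$. Likewise, two simple closed geodesics with lengths in $A$ can perfectly well intersect: they simply span a pair of pants or a once-holed torus with boundary length controlled by the lengths of the two curves. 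The actual reason these contributions vanish as $g\to\infty$ is \emph{probabilistic}: one shows via Mirzakhani's integration formula and the Mirzakhani--Zograf volume asymptotics that the expected number of embedded subsurfaces of the forbidden topological types with bounded total boundary length is $O(1/g)$. In the present paper's tight setting this is precisely the content of Proposition~\ref{number_of_closed_geodesics_with_inter_or_non_simple_1}, which is nontrivial precisely because, unlike in Mirzakhani--Petri, there is no a priori deterministic bound on the number of short tight geodesics (cf.\ the remark preceding it). So your plan needs to replace the ``geometric impossibility'' step with the corresponding first-moment volume estimate; the rest of the outline (dominant non-separating type, the $2^{-k}$ symmetry factor, the $\sinh^2$-to-$\cosh$ identity, and the Mirzakhani--Zograf decay of subleading topologies) is correct and matches the standard proof.
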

Janson and Louf proved in \cite{janson2021unicellular} a similar result for another model of large genus random geometry: unicelullar maps. Recently, this result has been extended to the case of random metric maps \cite{barazer2023lengt}.

\paragraph{Main results.}We aim to extend the theorem to the case where the number of cusps $n_g$ grows with the genus $g$. More precisely, we are interested in the regime where $g = o(n_g)$. For technical reasons our proof will be in the regime $n_g \gg g^3$. This regime is very different from the one in which $n_g=0$ and requires to modify the statement of the theorem for several reasons:
\begin{itemize}
    \item[$\bullet$] The work \cite{budd2025random} of one of the authors and Curien proves that the metric of the Weil-Petersson random hyperbolic surface of genus $0$ with $n$ cusps normalized by $n^{-1/4}$ converges in distribution to (a constant multiple of) the Brownian sphere as $n\to\infty$. This establishes a connection with random planar maps, where similar results have been obtained in the last decade (see~\cite{Miermont_quadrangulation,Le_Gall_uniqueness,Marzouk_2018}). It is natural to expect an analogue result for fixed genus $g \ge 1$, in which case the limiting object is a Brownian surface of genus $g$. This scaling limit has been established in the case of uniform quadrangulations in \cite{bettinelli2022compact}. Contrary to random hyperbolic surfaces of genus $g$ chosen under $\Pf_{g}^{\mathrm{WP}}$,  Brownian surfaces have a fractal structure (see Figure~\ref{diff_no_cusps_a_lot_of_cusps}). We should expect characteristics of these Brownian surfaces to appear in our regime as $g\to\infty$ and therefore we need to deal with this fractal structure.
    \item[$\bullet$] When the number of cusps is large, we may encounter collections of geodesics of almost equal length that are not homotopic because of cusps separating them. Thus, in order to have a chance of obtaining a Poisson point process in the limit we should focus on a subclass of geodesics.
    \item[$\bullet$] Typical distances on random surfaces grow with increasing number of cusps, so the lengths of geodesics  should be appropriately normalized.
\end{itemize}
\begin{figure}[H]
    \centering
    \includegraphics[scale = 0.5]{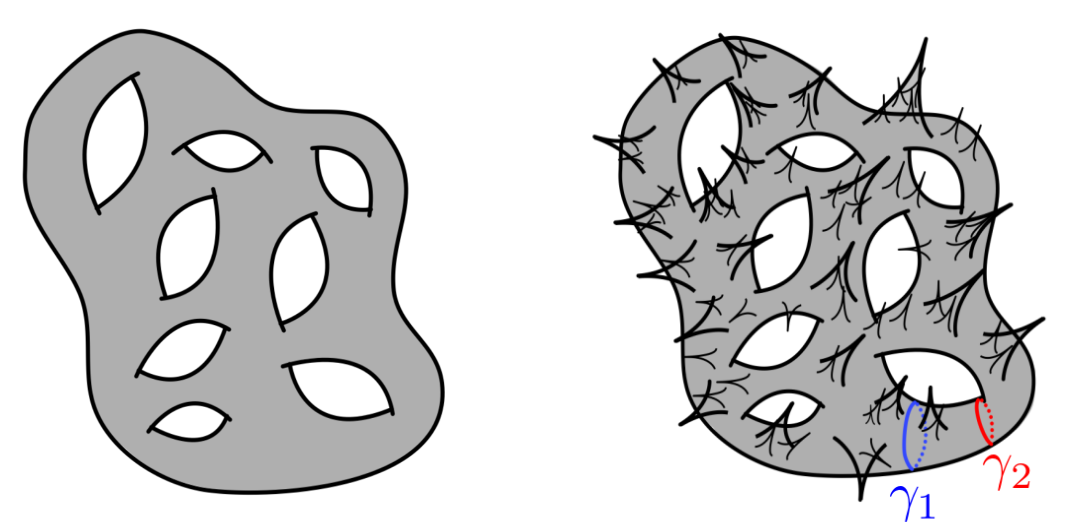}
    \caption{On the left a surface chosen under $\Pf^{\mathrm{WP}}_{8}$ and on the right a surface chosen under $\Pf^{\mathrm{WP}}_{8,74}$. On the right, the two geodesics $\gamma_1$ and $\gamma_2$ are very close in length, but not homotopic due to the cusps that separate them. The presence of a large number of cusps entails a fractal structure for the right surface which is not the case on the left.}
    \label{diff_no_cusps_a_lot_of_cusps}
\end{figure}
We restrict our study to a subset of geodesics called \emph{tight closed geodesics}. Informally, a closed geodesic is said to be tight (see Section~\ref{preliminaries} for a precise definition) if one cannot shorten the curve with a continuous deformation, even when allowing the curve to pass over cusps. Denote by $\Lambda^{\mathrm{tight}}_{g,\mu}$ the multiset of lengths of primitive tight closed geodesics of a surface sampled under $\Pf^{\mathrm{WP}}_{g,\mu}$. The following theorem proves that in the large genus regime, when the parameter $\mu_g$ is approaching the critical value $\mu_c$ sufficiently fast, we recover in the limit the same Poisson point process as in Theorem~\ref{Mirzakhani_Petri}.
\begin{theorem}\label{main_theorem}
    For any $\mu_{g} \xrightarrow[g \to \infty]{}\mu_c$ such that $\mu_c -\mu_g = o(g^{-2})$, we have
     \begin{align*}
         \displaystyle \alpha_1 (\mu_c- \mu_g)^{\frac{1}{4}}\Lambda^{\mathrm{tight}}_{g,\mu_g} \xrightarrow[g \to \infty]{\mathrm{(d)}} \mathcal{P},
     \end{align*}     
     where $\mathcal{P}$ is a Poisson point process on $\mathbb{R}_{+}$ with intensity $\frac{\cosh{t}-1}{t}\mathrm{d}t$ and $\alpha_1 = \sqrt{\frac{6}{\pi}\sqrt{\frac{2j_0}{J_1(j_0)}}} = 2.41105\ldots$ where $j_0$ is the first zero of the Bessel function of the first kind $J_0$. In the convergence, the random multiset $\Lambda^{\mathrm{tight}}_{g,\mu_g}$ is regarded as a random point process on $[0,\infty)$.
\end{theorem}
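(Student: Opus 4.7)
The plan is to follow the classical method of factorial moments for point processes on $[0,\infty)$. To obtain the stated convergence, it suffices to check that for every $k\ge 1$ and every finite collection of pairwise disjoint bounded intervals $[a_1,b_1],\ldots,[a_k,b_k] \subset (0,\infty)$, the $k$-th mixed factorial moment of the counts of $\alpha_1(\mu_c-\mu_g)^{1/4}\Lambda^{\mathrm{tight}}_{g,\mu_g}$ on these intervals converges, as $g\to\infty$, to $\prod_{i=1}^k \int_{a_i}^{b_i}\frac{\cosh t - 1}{t}\,dt$, matching the target Poisson point process $\Pseq$. This reduces everything to computing expectations under $\Pf^{\mathrm{WP}}_{g,\mu_g}$ of sums over ordered $k$-tuples $(\gamma_1,\ldots,\gamma_k)$ of distinct primitive tight closed geodesics, with rescaled lengths constrained to the given intervals.

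The main computational tool is the announced generalization of Mirzakhani's integration formula to the tight setting. It expresses such a $k$-point function as a finite sum, indexed by topological types of the multicurve $\gamma_1\cup\cdots\cup\gamma_k$, of an integral in the lengths $\ell_1,\ldots,\ell_k$ whose kernel is a ratio of tight Weil--Petersson volumes: the numerator is the Boltzmann-weighted tight volume of the (possibly disconnected) surface obtained by cutting $X$ along the multicurve, summed over the remaining number of cusps with weight $\mu_g^{\mathcal{N}}/\mathcal{N}!$; the denominator is the partition function $F_g(\mu_g)$. After the change of variables $\ell_i = \alpha_1^{-1}(\mu_c-\mu_g)^{-1/4}t_i$, these become integrals over $t_i\in[a_i,b_i]$, with tight volumes evaluated at vanishingly small boundary lengths.

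To extract the asymptotics of these normalized ratios in the joint limit $g\to\infty$, $\mu_g\to\mu_c$ with $\mu_c-\mu_g = o(g^{-2})$, I would rely on the recursion formula for tight Weil--Petersson volumes from \cite{budd2023topological}. Using the singular expansion of the underlying generating series near the critical Boltzmann weight $\mu_c = j_0J_1(j_0)/(4\pi^2)$, a saddle-point analysis should show that each boundary contributes a density proportional to $\frac{\cosh t - 1}{t}$, with the overall prefactor producing the universal constant $\alpha_1$. The critical rate $(\mu_c-\mu_g)^{1/4}$ is exactly the length scale on which tight geodesics become macroscopic in this regime, consistent with the Brownian-surface scaling mentioned in the introduction. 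For the \emph{generic} topological type --- where the $\gamma_i$ are pairwise non-homotopic and jointly non-separating, so that cutting produces a single connected surface of genus $g-k$ with $2k$ new boundaries --- the analysis should yield a factorized limit $\prod_{i=1}^k\int_{a_i}^{b_i}\frac{\cosh t - 1}{t}\,dt$, which is the Poisson factorial moment, and the independence between disjoint intervals follows from the factorization.

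The main obstacle is twofold. First, one must obtain tight-volume asymptotics uniformly in the $k$ boundary lengths and in $\mu_g$ approaching $\mu_c$ at the prescribed critical rate; this requires a careful treatment of the recursion of \cite{budd2023topological} near the singularity, and the hypothesis $\mu_c-\mu_g = o(g^{-2})$ is likely precisely what controls the subleading genus-$g$ corrections so that only the critical singularity contributes. Second, one must verify that every non-generic topological type (short separating multicurves, configurations cobounding a subsurface with only cusps, etc.) gives a negligible contribution. Here tightness plays the decisive role: a tight closed geodesic cannot be freely shortened across cusps, so the \emph{parallel geodesics separated by cusps} configurations visible in Figure~\ref{diff_no_cusps_a_lot_of_cusps} --- which would otherwise dominate and destroy Poisson convergence in the many-cusp regime --- are automatically excluded, and only a short list of degenerate topologies remain to be ruled out by direct volume estimates from the recursion.
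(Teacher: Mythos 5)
Your overall architecture is the right one and matches the paper's: factorial moments, the tight generalization of Mirzakhani's integration formula, a change of variables rescaling lengths by $(\mu_c-\mu_g)^{-1/4}$, and asymptotics of the tight Weil--Petersson volumes extracted from the recursion of \cite{budd2023topological} near the critical weight $\mu_c$, with the non-separating multicurve topology providing the dominant contribution and separating types controlled by volume estimates. Proposition~\ref{number_of_simple_tight_closed_geodesics} in the paper carries out exactly this programme for the part of the factorial moment coming from tuples of \emph{simple, pairwise disjoint} tight geodesics, and shows it converges to $\prod_i \lambda_{a_i,b_i}^{r_i}$.

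There is, however, a genuine gap in the strategy as stated. The standard method of moments requires showing that the full joint factorial moments $\mathbb{E}\big[(N^{\mathrm{tight}}_{g,\mu_g,a_1,b_1})_{r_1}\cdots(N^{\mathrm{tight}}_{g,\mu_g,a_n,b_n})_{r_n}\big]$ converge, and these also count tuples containing a self-intersecting tight geodesic or two intersecting tight geodesics. The integration formula applies only to multicurves consisting of disjoint simple curves, so it gives no handle on this ``crossing'' contribution $N^{\times}$. In the Mirzakhani--Petri setting this is managed by a deterministic bound (via collar estimates) on the number of closed geodesics of length $\leq L$, but in the present regime with unboundedly many cusps no such deterministic bound is available; the paper explicitly remarks that an unconditional estimate on $\mathbb{E}[N^{\times}]$ ``is out of reach.'' Your proposal implicitly treats all ``non-generic topological types'' by the same volume estimates, but intersecting/self-intersecting geodesics are not of the multicurve type covered by the integration formula at all.

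The paper circumvents this by abandoning moment convergence and proving instead a one-sided bound. First it establishes tightness of the counts $N^{\mathrm{tight}}_{g,\mu_g,a_i,b_i}$. It then shows (Propositions~\ref{number_of_closed_geodesics_with_inter_or_non_simple_1} and \ref{number_of_closed_geodesics_with_inter_or_non_simple}) that on the event $\{\hat{N}_{g,\mu_g,0,b_n}\le k\}\cap\{\hat{N}_{g,\mu_g,0,\varepsilon}=0\}$ --- bounded number of short simple tight geodesics and a lower bound on the tight systole --- the crossing contribution $N^{\times}$ vanishes in probability, using a ribbon graph/regular neighbourhood argument to produce a short separating tight multicurve and then the $I_{\mathrm{sep}}$ estimate. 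Finally, a Bonferroni-type inclusion--exclusion (applied to conditional probabilities given the good event, and letting first $A\to\infty$ then $\varepsilon\to 0$) yields a $\limsup$ upper bound on $\Pf(N^{\mathrm{tight}}_{g,\mu_g,a_1,b_1}=x_1,\ldots)$ by the Poisson probability, which together with tightness and the fact that probabilities sum to one forces the distributional limit. To repair your proposal you would need to replace the direct moment computation by this conditioning-plus-one-sided-bound scheme, or else supply a new a priori bound on the expected number of short intersecting tight geodesics that the paper does not have.
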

\noindent
This convergence in distribution can be equivalently stated as follows. 
Consider the random variable $N_{g,\mu,a,b}^{\mathrm{tight}}$ for $0\leq a < b$ counting the number of primitive tight closed geodesics on a hyperbolic surface $X$ chosen under $\Pf^{\mathrm{WP}}_{g,\mu}$ with length in the interval $[\alpha_1^{-1}(\mu_c-\mu)^{-\frac{1}{4}}a,\alpha_1^{-1}(\mu_c-\mu)^{-\frac{1}{4}}b]$.
Then for any $\mu_{g} \underset{g \to +\infty}{\rightarrow}\mu_c$ such that $\mu_c -\mu_g = o(g^{-2})$, and any $0 \le a_1 < b_1 < \cdots < a_r < b_r$, we have
\begin{align*}
 \left(N^{\mathrm{tight}}_{g,\mu_g,a_i,b_i}\right)_{1\le i \le r} \xrightarrow[g \to \infty]{\mathrm{(d)}}(\mathcal{P}_i)_{1 \le i \le r},
\end{align*}     
where $(\mathcal{P}_i)_{1 \le i \le r}$ is a family of independent Poisson variables with means $\lambda_{a_i,b_i}=\inte{a_i}{b_i}{\frac{\cosh{t}-1}{t}}{t}$. 

The following is a slight modification of Theorem~\ref{main_theorem} that makes it more explicit that the shortest tight geodesics have length of order $(n/g)^{1/4}$.
\begin{corollary}\label{main_corollary}
        For any sequence $(n_g)_{g \ge 0}$ such that $n_{g}/g^3 \to\infty$, there exists a sequence $(\mu_g)_{g \ge 0}$ such that $\mu_g \to \mu_c$ and $\mathcal{N}_{g,\mu_g} / n_g \xrightarrow[g \to \infty]{(\mathbb{P})}1$. Furthermore
   \begin{align*}
         \displaystyle \alpha_2 \bigg(\frac{n_g}{g}\bigg)^{-\frac{1}{4}}\Lambda^{\mathrm{tight}}_{g,\mu_g} \xrightarrow[g \to \infty]{\mathrm{(d)}} \mathcal{P},
     \end{align*}     
     where $\mathcal{P}$ is a Poisson point process on $\mathbb{R}_{+}$ with intensity $\frac{\cosh{t}-1}{t}\mathrm{d}t$ and $\alpha_2 = \frac{1}{\pi}\sqrt{3j_0\sqrt{5}}=1.27848\ldots$ where $j_0$ is the first zero of the Bessel function of the first kind $J_0$.  In the convergence, the random multiset $\Lambda_{g,\mu_g}$ is regarded as a random point process on $[0,\infty)$.
\end{corollary}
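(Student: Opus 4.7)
The plan is to reduce Corollary~\ref{main_corollary} to Theorem~\ref{main_theorem} by choosing $\mu_g \to \mu_c$ so that the Boltzmann number of cusps $\mathcal{N}_{g,\mu_g}$ concentrates at $n_g$. Since $\mathcal{N}_{g,\mu}$ is distributed proportionally to $\mu^n V_{g,n}/n!$, one immediately has
\[
\mathbb{E}[\mathcal{N}_{g,\mu}] = \mu\frac{F_g'(\mu)}{F_g(\mu)}, \qquad \mathrm{Var}(\mathcal{N}_{g,\mu}) = \mu\frac{d}{d\mu}\mathbb{E}[\mathcal{N}_{g,\mu}].
\]
The first ingredient is the singular behavior of $F_g$ at $\mu_c$. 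By the Manin--Zograf analysis \cite{manin1999invertible} for $g=0$ and its extension to higher $g$ via the topological recursion of \cite{budd2023topological}, one expects an asymptotic $F_g(\mu) \sim K_g(\mu_c-\mu)^{-a_g}$ as $\mu\to\mu_c^-$ with $a_g$ linear in $g$, whence $\mathbb{E}[\mathcal{N}_{g,\mu}] \sim \mu_c a_g/(\mu_c-\mu)$.

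I would then define $\mu_g$ implicitly by $\mathbb{E}[\mathcal{N}_{g,\mu_g}] = n_g$, which forces
\[
\mu_c - \mu_g \;\sim\; \frac{\mu_c a_g}{n_g}.
\]
With $a_g = \Theta(g)$ and $n_g \gg g^3$, this gives $\mu_c - \mu_g = o(g^{-2})$, placing us in the regime of Theorem~\ref{main_theorem}. Next, the variance estimate
\[
\mathrm{Var}(\mathcal{N}_{g,\mu_g}) \;\asymp\; \frac{a_g}{(\mu_c-\mu_g)^2} \;\asymp\; \frac{n_g^2}{g},
\]
combined with Chebyshev's inequality, yields $\mathcal{N}_{g,\mu_g}/n_g \to 1$ in probability, establishing the existence claim of the corollary.

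It remains to translate the scaling prefactor. Substituting $\mu_c - \mu_g \sim \mu_c a_g/n_g$ into $\alpha_1(\mu_c-\mu_g)^{1/4}$, the target asymptotic equivalence with $\alpha_2(n_g/g)^{-1/4}$ reduces to $\alpha_1^4\mu_c a_g/g \to \alpha_2^4$. A direct computation from the definitions, using $\mu_c = j_0J_1(j_0)/(4\pi^2)$, shows $\alpha_2^4/\alpha_1^4 = 5\mu_c/2$, so the matching fixes the required leading behavior as $a_g \sim 5g/2$. Once this is in place, applying Theorem~\ref{main_theorem} with the chosen $\mu_g$ gives the convergence of $\alpha_2(n_g/g)^{-1/4}\Lambda_{g,\mu_g}^{\mathrm{tight}}$ to $\mathcal{P}$.

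The main obstacle is establishing the precise leading behavior $a_g \sim 5g/2$ of the singularity exponent of $F_g$ together with sufficient uniformity. The singularity analysis is naturally pointwise in $g$, whereas here both $g\to\infty$ and $\mu_g\to\mu_c$ simultaneously, so one needs $\mathbb{E}[\mathcal{N}_{g,\mu_g}] \sim 5g\mu_c/(2(\mu_c-\mu_g))$ with explicit control on the subleading corrections at $\mu = \mu_g$. This should be extractable from the recursion of \cite{budd2023topological} by a large-$g$ singularity analysis of the associated generating series; once the mean and variance are controlled, the remaining concentration and constant-matching steps are routine.
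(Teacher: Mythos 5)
Your overall reduction is exactly the one used in the paper: choose $\mu_g$ so that the number of cusps concentrates at $n_g$, check $\mu_c - \mu_g = o(g^{-2})$, match the normalization constants, and invoke Theorem~\ref{main_theorem}. Your constant matching is also correct — indeed $\alpha_2^4/\alpha_1^4 = 5\mu_c/2$, consistent with the paper's explicit choice $\mu_c - \mu_g \sim \frac{5g}{2n_g}\mu_c$.

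However, there is a genuine gap, which you yourself flag as "the main obstacle": you need the asymptotics $\mathbb{E}[\mathcal{N}_{g,\mu_g}] \sim \frac{5g\mu_c}{2(\mu_c-\mu_g)}$ together with concentration, in the \emph{simultaneous} limit $g\to\infty$, $\mu_g\to\mu_c$. Your proposed route — a singularity analysis of $F_g(\mu)$ giving $F_g(\mu)\sim K_g(\mu_c-\mu)^{-a_g}$ with $a_g\sim 5g/2$ — is pointwise in $g$ by nature, and you do not establish the uniformity needed to differentiate the asymptotic and transfer it to $F_g'/F_g$ and $F_g''/F_g$ at $\mu=\mu_g$. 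This is not a routine step: it is precisely the hard part. The paper avoids singularity analysis of $F_g$ entirely, instead using the exact identity
\begin{align*}
\mathbb{E}\big[\mathcal{N}_{g,\mu}(\mathcal{N}_{g,\mu}-1)\cdots(\mathcal{N}_{g,\mu}-r)\big] = \mu^{r+1}\frac{T_{g,r+1}(\mu)}{T_g(\mu)},
\end{align*}
and then feeding in the uniform estimates on the tight volume polynomials from Section~\ref{estimates_of_polynomial} (Propositions~\ref{split_sum}, \ref{uniform_bound_derivative_and_A}, \ref{Estimate_derivative_Pg0}). That machinery is specifically built to handle the double limit $g\to\infty$, $\mu\to\mu_c$ with $\mu_c-\mu=o(g^{-2})$, and it yields both the mean asymptotic (Proposition~\ref{number_of_cusps_mean_and_concentration}) and the second-moment bound needed for Chebyshev (Corollary~\ref{Concentration_result}). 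Without this, or an equally careful uniform treatment of the $F_g$ singularity, your argument leaves the first claim of the corollary ($\mathcal{N}_{g,\mu_g}/n_g\to 1$ in probability) unjustified.

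A minor secondary point: the paper fixes $\mu_g$ \emph{explicitly} by the relation $\mu_c-\mu_g\sim\frac{5g}{2n_g}\mu_c$ rather than implicitly via $\mathbb{E}[\mathcal{N}_{g,\mu_g}]=n_g$. The explicit choice is cleaner because the hypothesis of Theorem~\ref{main_theorem} and the scaling-constant match are then immediate; the implicit definition requires first establishing the mean asymptotics to even verify that $\mu_c-\mu_g=o(g^{-2})$, which is a circularity you would need to break.
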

\noindent The systole $\ell_{\mathrm{sys}}$ of a hyperbolic surface is the length of its shortest non-trivial closed geodesic. 
Similarly, the non-separating systole $\ell^{\,\mathrm{ns}}_{\mathrm{sys}}$ is the length of the shortest non-separating geodesic.
The latter geodesic is necessarily tight, whereas the former is tight unless it bounds a disk with two or more cusps.
Therefore it is natural to also define the tight systole $\ell^{\,\mathrm{tight}}_{\mathrm{sys}}$ to be the length of the shortest (not necessarily separating) tight geodesic.

\begin{corollary}\label{systole}
    Let $(n_g)_{g \ge 0}$ and $(\mu_g)_{g \ge 0}$ be sequences as in Corollary~\ref{main_corollary}. 
    Then
    \begin{align*}
        \Pf^{\mathrm{WP}}_{g,\mu_g}(\ell^{\,\mathrm{ns}}_{\mathrm{sys}} = \ell^{\,\mathrm{tight}}_{sys}) \xrightarrow[g \to \infty]{}1.
    \end{align*}
Moreover, under $\Pf^{\mathrm{WP}}_{g,\mu_g}$, the random variables $\alpha_2(\frac{n_g}{g})^{-\frac{1}{4}}\ell^{\,\mathrm{ns}}_{\mathrm{sys}}$ and $\alpha_2(\frac{n_g}{g})^{-\frac{1}{4}}\ell^{\,\mathrm{tight}}_{\mathrm{sys}}$, with $\alpha_2$ as in Corollary~\ref{main_corollary}, both converge in distribution to a random variable $X$ with law
\begin{align*}
    \Pf(X \ge t) = \exp\bigg(-\inte{0}{t}{\frac{\cosh{t}-1}{t}}{t}\bigg).
\end{align*}
\end{corollary}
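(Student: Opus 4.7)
The plan is to deduce Corollary~\ref{systole} from Corollary~\ref{main_corollary} together with a first-moment estimate showing that \emph{separating} primitive tight closed geodesics of bounded normalized length are rare. The convergence of the normalized tight systole is essentially immediate: $\ell^{\,\mathrm{tight}}_{\mathrm{sys}}$ is the minimum of the random multiset $\Lambda^{\mathrm{tight}}_{g,\mu_g}$, so for each $t > 0$
\begin{equation*}
\Pf^{\mathrm{WP}}_{g,\mu_g}\!\left(\alpha_2 (n_g/g)^{-1/4}\, \ell^{\,\mathrm{tight}}_{\mathrm{sys}} > t\right) \;=\; \Pf^{\mathrm{WP}}_{g,\mu_g}\!\left(N^{\mathrm{tight}}_{g,\mu_g,0,t} = 0\right),
\end{equation*}
and Corollary~\ref{main_corollary} in its equivalent counting form (with $r=1$, $a_1=0$, $b_1=t$) shows that the right-hand side tends to $\exp(-\lambda_{0,t}) = \Pf(X \ge t)$. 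Since the limit is continuous in $t$, this gives convergence in distribution of $\alpha_2(n_g/g)^{-1/4}\ell^{\,\mathrm{tight}}_{\mathrm{sys}}$ to $X$.

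To handle $\ell^{\,\mathrm{ns}}_{\mathrm{sys}}$, observe that every non-separating geodesic is tight, whence $\ell^{\,\mathrm{tight}}_{\mathrm{sys}} \le \ell^{\,\mathrm{ns}}_{\mathrm{sys}}$, with strict inequality only when the shortest primitive tight geodesic is separating. Fixing $\varepsilon>0$ and choosing $T$ so large that $\Pf(X>T)<\varepsilon$, the first step above yields $\Pf^{\mathrm{WP}}_{g,\mu_g}(\alpha_2 (n_g/g)^{-1/4}\ell^{\,\mathrm{tight}}_{\mathrm{sys}} > T) < 2\varepsilon$ for $g$ large. On the complementary event the shortest tight geodesic has normalized length at most $T$, so it is separating only if the event
\begin{equation*}
E_{g,T} \;=\; \{\text{some separating primitive tight geodesic has length } \le \alpha_2^{-1}(n_g/g)^{1/4}T\}
\end{equation*}
occurs. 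If the expected number of such separating tight geodesics tends to $0$, Markov's inequality gives $\Pf^{\mathrm{WP}}_{g,\mu_g}(E_{g,T}) \to 0$, hence $\Pf^{\mathrm{WP}}_{g,\mu_g}(\ell^{\,\mathrm{ns}}_{\mathrm{sys}} \ne \ell^{\,\mathrm{tight}}_{\mathrm{sys}}) < 3\varepsilon$ eventually; letting $\varepsilon \to 0$ proves the first claim of the corollary. The convergence in distribution of $\alpha_2(n_g/g)^{-1/4}\ell^{\,\mathrm{ns}}_{\mathrm{sys}}$ to $X$ then follows by Slutsky from the tight systole convergence together with $\Pf^{\mathrm{WP}}_{g,\mu_g}(\ell^{\,\mathrm{ns}}_{\mathrm{sys}} = \ell^{\,\mathrm{tight}}_{\mathrm{sys}}) \to 1$.

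The hard part is the first-moment bound on separating tight geodesics, and this is precisely where the tight generalization of Mirzakhani's integration formula announced in the abstract should do its work. The expected number of separating primitive tight geodesics of length at most $L$ decomposes as a sum over admissible topological types of the pair of subsurfaces obtained by cutting, each contribution being a Weil--Petersson integral of a product of tight volumes against a simple length density. Using the recursion for tight Weil--Petersson volumes from \cite{budd2023topological} together with the volume growth estimates that feed the proof of Theorem~\ref{main_theorem}, each individual topological type should contribute $o(1)$ in the regime $n_g \gg g^3$, while only non-separating cuts reproduce the Poisson intensity $(\cosh t - 1)/t\,\mathrm{d}t$ of Corollary~\ref{main_corollary}. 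The main technical subtlety is that the number of cusps on each side of a separating geodesic is unbounded, so one must sum a series over infinitely many topological types; this should be controlled by the sub-criticality $\mu_g < \mu_c$, which ensures convergence of the associated cusp generating function uniformly for $g$ large.
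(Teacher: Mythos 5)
Your route is the same in spirit as the paper's (Corollary~\ref{main_corollary} for the tight systole, then reduce the almost-sure equality $\ell^{\,\mathrm{ns}}_{\mathrm{sys}}=\ell^{\mathrm{tight}}_{\mathrm{sys}}$ to a first-moment bound on a rare event), but you single out a more accurate rare event. The paper conditions on $\{N^{\times}_{g,\mu_g,0,t}=0\}\cap\{N_{g,\mu_g,0,t}\neq 0\}$, i.e.\ no short \emph{non-simple} tight geodesic and at least one short tight geodesic, and asserts the two systoles agree on that event; but this event does not exclude the shortest tight geodesic being a simple \emph{separating} geodesic, which is precisely when one would have $\ell^{\mathrm{tight}}_{\mathrm{sys}}<\ell^{\,\mathrm{ns}}_{\mathrm{sys}}$. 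Your event $E_{g,T}$ (``some short separating tight geodesic exists'') is the one that actually controls the discrepancy, so your version closes a gap in the written argument. Two small corrections are needed to make it airtight: the claim ``every non-separating geodesic is tight'' should read ``the \emph{shortest} non-separating geodesic is tight'' (as noted in the paper's introduction), which is all you need for $\ell^{\mathrm{tight}}_{\mathrm{sys}}\le \ell^{\,\mathrm{ns}}_{\mathrm{sys}}$; and your dichotomy separating/non-separating for the shortest tight geodesic implicitly assumes it is simple---this is true (smoothing a self-intersection produces two strictly shorter loops, at least one non-trivial in the extended homotopy class, hence a strictly shorter tight geodesic), but you should either state it or also include short non-simple tight geodesics in $E_{g,T}$, which Proposition~\ref{number_of_closed_geodesics_with_inter_or_non_simple_1} already covers. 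Finally, the first-moment bound you defer is not a real gap: the expected number of short separating tight multicurves is exactly the quantity $I_{\mathrm{sep}}$ shown to vanish in the proof of Proposition~\ref{number_of_simple_tight_closed_geodesics} via Proposition~\ref{bound_separating_multicurve}; your worry about infinitely many topological types does not materialize, since the sum over genera and boundary partitions is finite for each fixed number of curves, the cusps having already been summed into the generating functions $T_{g,n}(\mathbf{L},\mu)$.
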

\begin{remark1}
    We expect $\alpha_2\left(\frac{n_g}{g}\right)^{-\frac{1}{4}}\mathbb{E}\left[\ell^{\mathrm{tight}}_{sys}\right] \xrightarrow[g\to\infty]{} \inte{0}{\infty}{\exp(-\lambda_{0,t})}{t}$. However, this convergence requires extra control on $\Pf^{\mathrm{WP}}_{g,\mu_g}\bigg(\ell^{\mathrm{tight}}_{sys} \ge \alpha_2(\frac{n_g}{g})^{\frac{1}{4}}t \bigg)$, which we currently lack.
\end{remark1}
\paragraph{Open questions and conjecture.}
We state several questions that remain open and are natural extensions of our results.
\begin{openquestion}
    Does Theorem~\ref{main_theorem} hold for every sequence  $\mu_g \xrightarrow[g\to\infty]{}\mu_c$? Is there still a limiting point process if instead $\mu_g \to c \in [0,\mu_c)$? Is it possible to extend Theorem~\ref{main_theorem} under the probability $\Pf^{\mathrm{WP}}_{g,n_g}$ instead of $\Pf^{\mathrm{WP}}_{g,\mu_g}$? 
\end{openquestion}
\noindent
Our results motivate the conjecture that the limiting Poisson point process is also the large genus limit of the geodesic length spectrum of the Brownian surface of genus $g$ as $g \to \infty$.
\begin{conjecture}\label{length_spectrum_brownian_surface}
Let $\mathbf{S}_{g}$ be the unit-area Brownian surface of genus $g$ with no boundary introduced in \cite[Theorem~1.1]{bettinelli2022compact}. Define $\Upxi_g$ to be the multiset of the lengths of closed geodesics on $\mathbf{S}_g$, in the sense of shortest closed paths in their homotopy class. Then we expect
\begin{align*}
    (80 g)^{\frac{1}{4}}\,\Upxi_g \xrightarrow[g\to\infty]{\mathrm{(d)}} \mathcal{P},
\end{align*}
where $\mathcal{P}$ is the same Poisson point process on $\mathbb{R}_{+}$ with intensity $\frac{\cosh{t}-1}{t}\mathrm{d}t$ as above. In the convergence, the random multiset $\Upxi_g$ is regarded as a random point process on $[0,\infty)$.
The curious constant $(80)^{1/4}$ arises from the combination of $\alpha_2$ from Corollary~\ref{main_corollary} with the scaling constant $c_{\mathrm{WP}} = 2 \pi /\sqrt{3 j_0}$ of  \cite[Theorem~2]{budd2025random} or \cite[Corollary~6]{budd2025tree}, which satisfy $\alpha_2\,c_{\mathrm{WP}} = (80)^{1/4}$.
\end{conjecture}

\paragraph{Structure of the paper.}
In Section~\ref{preliminaries}, we give a recap of Weil-Petersson volumes and introduce notions related to tightness. In Section~\ref{Integration_formula}, we prove an integration formula that is adapted to \emph{tight multi-curves}. Then, in Section~\ref{estimates_tight_volumes}, we give asymptotics for the volumes $T_{g,n}((\mu_c-\mu_g)^{-\frac{1}{4}}\mathbf{L},\mu_g)$ where $g \to \infty$ relying intensively on \cite[Theorem~$4$]{budd2023topological}. Note that this part is interesting in itself since the method used also works for $g$ fixed. Finally, Section~\ref{proof_main_theorem} is dedicated to proving Theorem~\ref{main_theorem}, Corollary~\ref{main_corollary} and Corollary~\ref{systole}.

\paragraph{Acknowledgements.} 
This work is supported by the VIDI programme with project number VI.Vidi.193.048, which is financed by the Dutch Research Council (NWO). We are also grateful to the ENS Paris-Saclay for Lions's scholarship, which made this collaboration possible. We would also like to thank Bart Zonneveld, Joe Thomas, Will Hide, Baptiste Louf and Mingkun Liu for interesting and useful discussions about this work.

\clearpage

\tableofcontents

\section{Preliminaries}\label{preliminaries}
\subsection{Random hyperbolic surfaces}
\paragraph{Hyperbolic surfaces and Teichmüller space.} An oriented, compact, connected surface $X$ is said to be a \emph{hyperbolic surface} if it is equipped with a Riemmanian metric of constant curvature $-1$. For $g,n \ge 0$ such that $2g+n \ge 3$ we fix a compact, connected, oriented surface $\Sigma_{g,n}$ of genus $g$ with $n$ distinguished boundaries $\partial_1 \Sigma_{g,n},\cdots,\partial_n \Sigma_{g,n}$. We say that a pair $(X,\phi)$ is a marking if $X$ is an hyperbolic surface and $\phi : \Sigma_{g,n} \rightarrow X$ is a diffeomorphism. Two markings $(X,\phi)$ and $(Y,\psi)$ are said to be equivalent and we write $(X,\phi) \sim (Y,\psi)$ if there exists an isometry $h : X \to Y$ such that $\psi^{-1} \circ h \circ \phi : \Sigma_{g,n} \to \Sigma_{g,n}$ is isotopic to $\operatorname{id}_{\Sigma_{g,n}}$. For $\mathbf{L} = (L_1,\cdots,L_n) \in \mathbb{R}_{+}$, the \emph{Teichmüller space} $\mathcal{T}_{g,n}(\mathbf{L})$ is defined as
\begin{align*}
    \mathcal{T}_{g,n}(\mathbf{L}) := \left \{
\begin{array}{rcl}
 &(X,\phi) \text{ is a marking and }\\
(X,\phi) :& X \text{ has geodesic boundaries }\\ 
&\text{ of specified lengths }\mathbf{L}
\end{array}
\right\} \backslash \sim.
\end{align*}
In words, the marking is a way to equip our surface $\Sigma_{g,n}$ with a hyperbolic structure. However, one can see that the same hyperbolic surface admits multiple markings.
\begin{figure}[H]
    \centering
    \includegraphics[scale = 0.3]{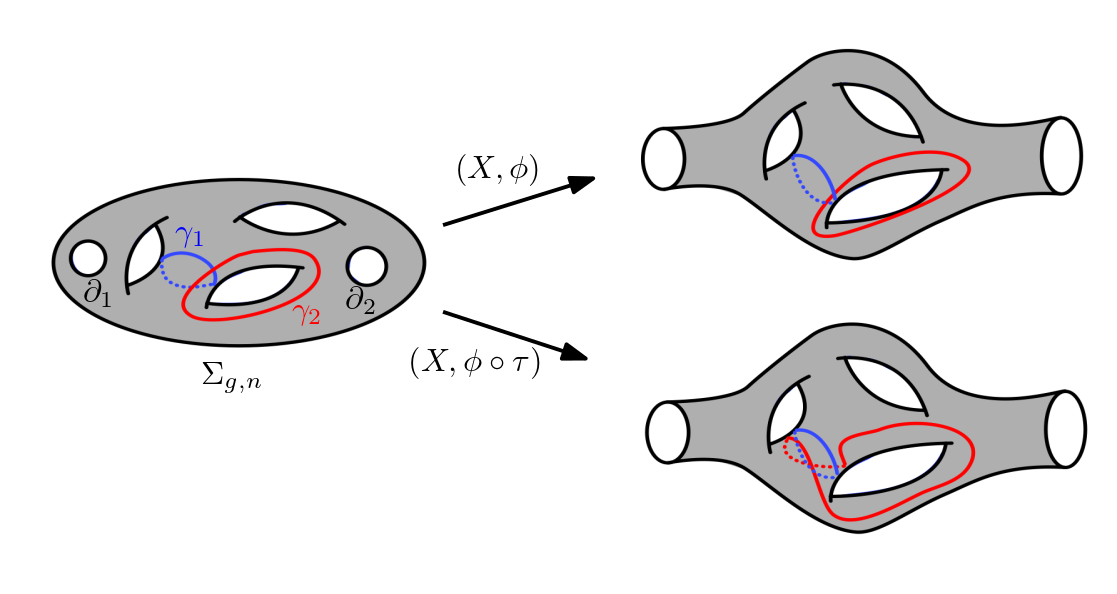}
    \caption{On the left the surface $\Sigma_{g,n}$. On the right, two examples of markings which give the same hyperbolic surface $X$, however this gives two different points in $\mathcal{T}_{g,n}(\mathbf{L})$. Indeed to obtain the bottom marking we have first applied a full twist $\tau$ along the geodesic $\gamma_1$ in $\Sigma_{g,n}$.}
    \label{marking}
\end{figure}

In the following we want to work with hyperbolic surfaces seen up to isometry, thus we want to forget the marking.
\paragraph{The moduli space of hyperbolic surface and the Weil-Petersson measure.}
We define the the \emph{mapping class group} $\text{MCG}_{g,n}$ as follows
\begin{align}\label{mapping_class_group_definition}
    \text{MCG}_{g,n} = \text{Diff}^{+}(\Sigma_{g,n}) \backslash \text{Diff}^{+}_{0}(\Sigma_{g,n}),
\end{align}
where
\begin{align*}
    \text{Diff}^{+}(\Sigma_{g,n}) := \left \{
\begin{array}{rcl}
 & \phi \text{ is an orientation }\\
\phi : \Sigma_{g,n} \to \Sigma_{g,n} :& \text{preserving  diffeomorphism}\\ 
&\text{that preserves the boundaries}
\end{array}
\right\}\\
\end{align*}
and
\begin{align*}
    \text{Diff}^{+}_{0}(\Sigma_{g,n}) = \bigg\{\phi \in \text{Diff}^{+}(\Sigma_{g,n}) : \phi \text{ is homotopic to }id_{\Sigma_{g,n}}\}.
\end{align*}
The space we will work with in the rest of the paper is called the \emph{moduli space} $\mathcal{M}_{g,n}(\mathbf{L})$ and is defined as
\begin{align}\label{moduli_space_definition}
    \mathcal{M}_{g,n}(\mathbf{L}) = \mathcal{T}_{g,n}(\mathbf{L}) \backslash \text{MCG}_{g,n}.
\end{align}
The moduli space can be seen as the space of hyperbolic surfaces seen up to isometry with $n$ distinguished geodesic boundaries of specified length $\mathbf{L}$.\\
The moduli space $\mathcal{M}_{g,n}(\mathbf{L})$ is much smaller than $\mathcal{T}_{g,n}(\mathbf{L})$ since different markings may correspond to the same hyperbolic surface in $\mathcal{M}_{g,n}(\mathbf{L})$ (see Figure~\ref{marking}). 
We consider the Weil-Petersson measure $\mu_{g,n}^{\mathrm{WP}}$ on $\mathcal{M}_{g,n}(\mathbf{L})$, which is well known to have finite total volume $\mu_{g,n}^{\mathrm{WP}}(\mathcal{M}_{g,n}(\mathbf{L}))$.
The Weil-Petersson volumes are defined as $V_{g,n}(\mathbf{L}) = \mu_{g,n}^{\mathrm{WP}}(\mathcal{M}_{g,n}(\mathbf{L}))$ for $2g+n > 3$, $V_{0,3}(\mathbf{L}) = 1$ and $V_{1,1}(\mathbf{L}) = \frac{1}{2}\mu_{g,n}^{\mathrm{WP}}(\mathcal{M}_{1,1}(\mathbf{L}))$.
The conventional factor of $\frac{1}{2}$ for the volume $V_{1,1}(\mathbf{L})$ reflects the non-trivial automorphisms of surfaces in $\mathcal{M}_{1,1}(\mathbf{L})$, leading to simpler formulas in the following.
For $2g+n > 3$, we write $\Pf_{g,n}^{\mathrm{WP}} = V_{g,n}^{-1}\mu_{g,n}^{\mathrm{WP}}$ which defines a probability measure on $\mathcal{M}_{g,n}(\mathbf{L})$. 

\paragraph{Closed curves, geodesics and multi-curves.}
For $S$ a surface, a \emph{closed curve} is a continuous map $c : [0,1] \to S$ such that $c(0) = c(1)$. We say that $c$ is \emph{simple} if it does not self-intersect. We say that two closed curves $c_1$ and $c_2$ are \emph{freely-homotopic} if there exists a continous map $h : [0,1]^{2} \to S$ such that $h|_{\{0\}\times [0,1]} = c_1$ and  $h|_{\{1\}\times [0,1]} = c_2$ and $h(t,0)=h(t,1)$ for $t\in[0,1]$. This defines an equivalence relation. We will often consider curves up to free-homotopy and we use the notation $[c]_S$ to denote the free-homotopy class of a curve $c$ in $S$.

For $X$ a hyperbolic surface and $c$ a closed curve on $X$ that is non-contractible, there exists a unique geodesic in its free-homotopy class (see \cite[Theorem $1.6.6$]{buser1992geometry}), thus we usually identify $[c]$ with the unique geodesic in the equivalence class and we define $\ell_{X}([c])$ its length.

A \emph{multi-curve} is a set of disjoint non-contractible simple closed curves $(\gamma_1,\cdots,\gamma_k)$ such that for any $1\le i < j \le k$, the curve $\gamma_i$ is freely homotopic to neither $\gamma_j$ or $\gamma^{-1}_j$. On a hyperbolic surface $X$, any multi-curve $\Gamma = (\gamma_1,\cdots,\gamma_k)$ has a representative in its free-homotopy class which is made of geodesics (see \cite[Theorem $1.6.6-7$]{buser1992geometry}). In words, this means that it is possible to tighten the curves $\gamma_i$ while keeping them simple and disjoint. We can then define the \emph{total length} and \emph{length vector} of a multi-curve as
\begin{align*}
    \ell_{X}(\Gamma) = \somme{i=1}{k}{\ell_{X}(\gamma_i)}\hspace{0.5cm}\text{ and }\hspace{0.5cm}   \vec{\ell}_{X}(\Gamma) = (\ell_X(\gamma_1),\cdots,\ell_X(\gamma_k)).
\end{align*}
\subsection{Tight closed geodesics and Boltzmann random hyperbolic surfaces}
\paragraph{Tight geodesics.}
Fix $n,g,p \geq 0$ such that $2g + n+p > 3$. We define $\Sigma_{g,n,p}$ to be the surface obtained by capping off the last $p$ boundaries of $\Sigma_{g,n+p}$ with disks. If $\gamma$ is a closed curve in $\Sigma_{g,n+p}$, we may consider its \emph{extended} free homotopy class $[\gamma]_{\Sigma_{g,n,p}}$ by viewing $\gamma$ as a curve on $\Sigma_{g,n,p}$. Note that when $p>0$, this extended free homotopy class naturally includes infinitely many free homotopy classes of $\Sigma_{g,n+p}$.
\begin{definition1}\label{def_tightness}
 For $(X,\phi) \in \mathcal{T}_{g,n+p}(\mathbf{L})$ a marking, a closed geodesic $\gamma$ on $X$ is said to be a \emph{tight closed geodesic} if $\gamma$ is not contractible in $\Sigma_{g,n,p}$ and all other closed geodesics $\gamma'$ on $X$ belonging to the same extended free homotopy class,  $\gamma'\in [\gamma]_{\Sigma_{g,n,p}}$,  have length $\ell_X(\gamma') > \ell_X(\gamma)$.
\end{definition1}
Note that this definition depends on $p$ and not only on $n+p$. Indeed, for $p=0$ all geodesics are tight while for $p > 0$, since we 'cap off' $p$ boundaries, there are fewer tight geodesics. In the introduction, we considered this definition in the case where $n = 0$, i.e.\ curves are allowed to pass over all boundaries (or cusps).

 One can check that the tightness is invariant under the action of MCG$_{g,n+p}$. Thus, it is correctly defined on $\mathcal{M}_{g,n+p}(\mathbf{L})$. Note that $\partial_1 X,\cdots,\partial_n X$ are not necessarily tight closed geodesics. However, we will later be interested in that case. See \cite{budd2023topological} for an introduction of hyperbolic surfaces with tight boundaries. The reader may also be interested in \cite{AHL_2022__5__1035_0,bouttier2022quasipolynomials} where the authors introduce the analogous notion for planar maps.
\begin{figure}[H]
    \centering
    \includegraphics[scale = 1.0]{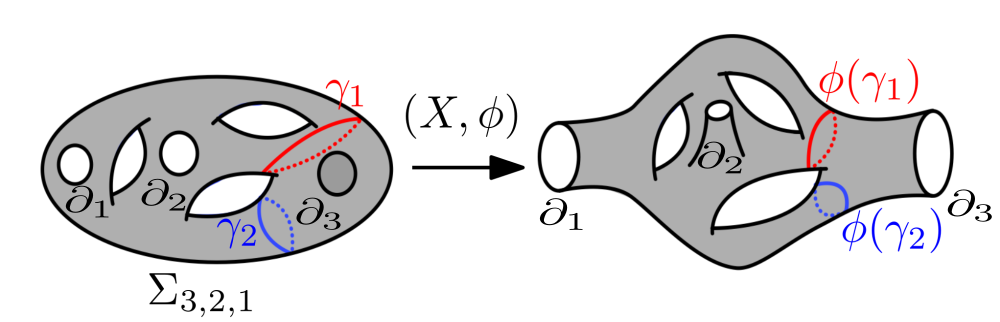}
    \caption{We take $g=3, n=2, p=1$ and we consider a marking $(X,\phi)$. The geodesic $\phi(\gamma_1)$ is not tight since $\phi(\gamma_2)$ has a smaller length and $\gamma_1$ and $\gamma_2$ are homotopic in $\Sigma_{3,2,1}$.}
    \label{tight_geodesic}
\end{figure}
In words, a tight closed geodesic is the shortest curve in the extended homotopy class where curves are allowed to pass over the boundaries $\partial_{n+1},\cdots,\partial_{n+p}$ (see Figure~\ref{tight_geodesic}). Note that if a simple closed geodesic separates a surface of genus $0$ containing none of the boundaries $\partial_1,\cdots,\partial_n$, then this geodesic cannot be tight since it is homotopic to a point in the extended homotopy class (see Figure~\ref{go_to_tight}). In particular, if $n = 0$, then the simple tight closed geodesics are genus reducing or separating. Reciprocally, for $n =0$, to any simple closed geodesic $\gamma$ that is genus reducing or separating, there exists a simple closed geodesic of minimal length in its extended homotopy class (see \cite[Theorem $1.6.11$]{buser1992geometry}). Furthermore, for $\mu^{\mathrm{WP}}_{g,n}$-almost every $X$, all simple closed geodesics have distinct length, thus the representative is unique and tight (see Figure~\ref{go_to_tight}). Moreover, in the latter case, if we cut the surface along the tight representative, we obtain a surface with tight geodesic boundaries.
In fact, we will see below (Lemma~\ref{lem:tightcomponents}) that the reverse holds as well: if one glues surfaces along tight geodesics boundaries of matching length, the gluing seams are tight geodesics.
\begin{figure}[H]
    \centering
    \includegraphics[scale=0.25]{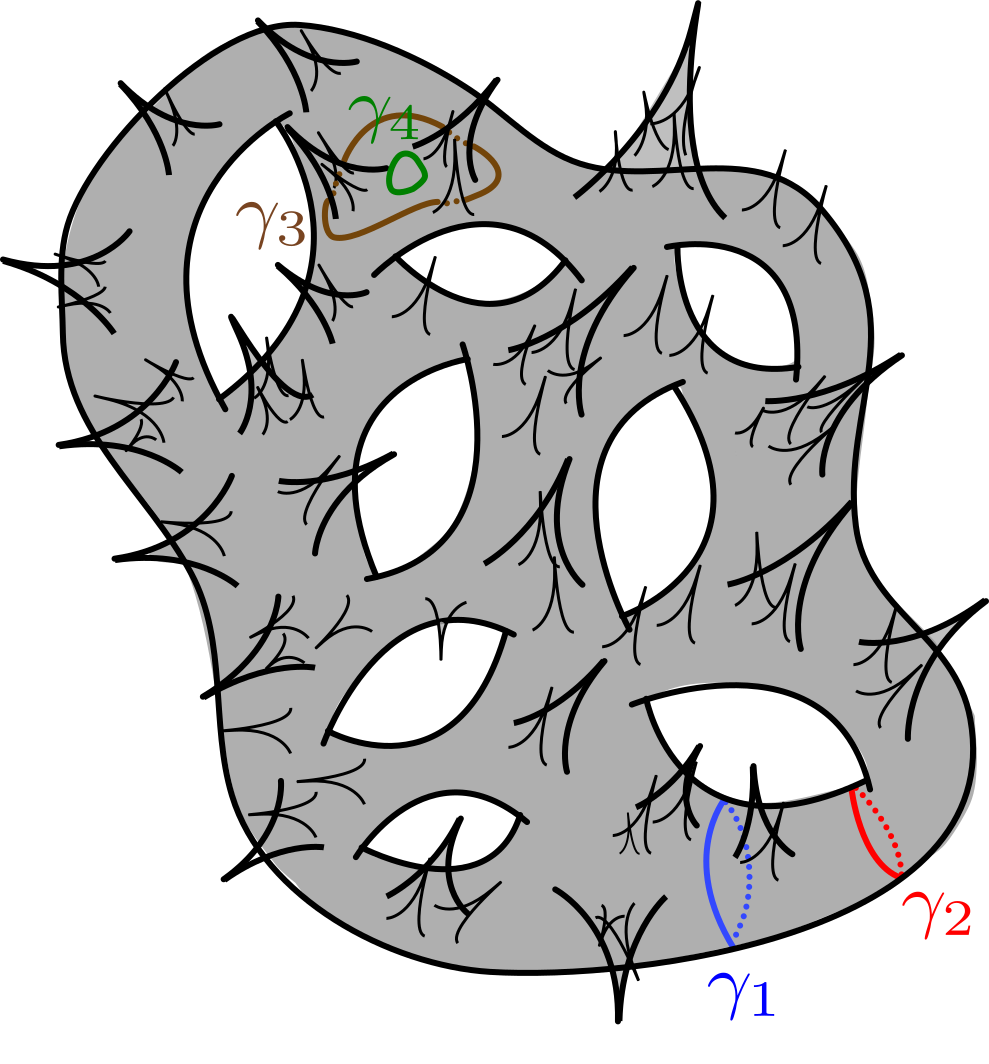}
    \caption{In this example, with $n=0$, the curves $\gamma_3$ and $\gamma_4$ are part of the same extended homotopy class, thus none of them is tight. The curve $\gamma_2$ is the tight representative of $\gamma_1$.}
    \label{go_to_tight}
\end{figure}

This motivates of the definition the moduli space of hyperbolic surfaces with $n$ tight boundaries by setting
\begin{align*}
    \mathcal{M}^{\mathrm{tight}}_{g,n,p}(\mathbf{L}) = \{X \in \mathcal{M}_{g,n+p}(\mathbf{L})  \text{ : }\partial_1,\cdots, \partial_n \text{ are tight}\} \subset \mathcal{M}_{g,n+p}(\mathbf{L}).
\end{align*}
This defines an open subset of $\mathcal{M}_{g,n+p}(\mathbf{L})$, so we can consider the Weil-Petersson measure on this set. We define $T_{g,n,p}(\mathbf{L}) = \mu_{g,n+p}^{\mathrm{WP}}(\mathcal{M}^{\mathrm{tight}}_{g,n,p}(\mathbf{L})) \le V_{g,n+p}$. Note that $T_{g,n,0}(\mathbf{L}) = T_{g,0,n}(\mathbf{L}) = V_{g,n}(\mathbf{L})$.

In the following, we will think of the tight boundaries as the real boundaries and the number of non-tight boundaries $p$ will be made random.
\paragraph{Boltzmann hyperbolic surfaces.}\label{Boltzmann hyperbolic surfaces}
For $\mu \ge 0$, $g,n \ge 0$ and $\mathbf{L} \in \mathbb{R}_{+}^{n}$, we define
\begin{align}\label{boltzmann_volume}
    &F_{g,n}(\mathbf{L},\mu) = \somme{p=0}{\infty}{\mu^{p}\frac{V_{g,n+p}(\mathbf{L},\mathbf{0}^{p})}{p!}}, \hspace{1cm} T_{g,n}(\mathbf{L},\mu) = \somme{p=0}{\infty}{\mu^{p}\frac{T_{g,n,p}(\mathbf{L},\mathbf{0}^{p})}{p!}}.
\end{align}
If $F_{g,n}(\mathbf{L},\mu)$ is finite we define the \emph{$\mu$-Boltzmann probability measure} $\Pf^{\mathrm{WP}}_{g,\mu,n,\mathbf{L}}$ on $\displaystyle \bigsqcup_{p=0}^{\infty}\mathcal{M}_{g,n+p}(\mathbf{L})$ by first sampling $p$ with probability $\displaystyle F_{g,n}(\mathbf{L},\mu)^{-1}\mu^{p}\frac{V_{g,n+p}(\mathbf{L},\mathbf{0}^{p})}{p!}$ and then choosing $X$ under $\Pf^{\mathrm{WP}}_{g,n,(\mathbf{L},0^{p})}$. We call $\mathcal{N}_{g,\mu,n,\mathbf{L}}$ the number of cusps of the surface $X$ chosen under $\Pf^{\mathrm{WP}}_{g,\mu,n,\mathbf{L}}$.\\

\noindent If $T_{g,n}(\mathbf{L},\mu)$ is finite, we define the \emph{tight $\mu$-Boltzmann probability measure} $\Pf^{\mathrm{tight}}_{g,\mu,n,\mathbf{L}}$ on $\displaystyle \bigsqcup_{p=0}^{+\infty}\mathcal{M}^{\mathrm{tight}}_{g,n+p}(\mathbf{L})$ by first sampling $p$ with probability $\displaystyle T_{g,n}(\mathbf{L},\mu)^{-1}\mu^{p}\frac{T_{g,n,p}(\mathbf{L},\mathbf{0}^{p})}{p!}$ and then choosing $X$ under the Weil-Petersson probability measure on $\displaystyle \mathcal{M}^{\mathrm{tight}}_{g,n+p}(\mathbf{L},\mathbf{0}^{p})$. We call $\mathcal{N}^{\mathrm{tight}}_{g,\mu,n,\mathbf{L}}$ the number of cusps of the surface $X$ chosen under $\Pf^{\mathrm{tight}}_{g,\mu,n,\mathbf{L}}$.\\

\noindent Observe that in the case $n = 0$ or $L = \mathbf{0}^{n}$, we have $\mathcal{M}^{\mathrm{tight}}_{g,n,p}(\mathbf{L},\mathbf{0}^{p}) = \mathcal{M}_{g,n+p}(\mathbf{L},\mathbf{0}^{p})$. It follows that in these cases we have $\Pf^{\mathrm{tight}}_{g,\mu,n,\mathbf{L}} = \Pf^{\mathrm{WP}}_{g,\mu,n,\mathbf{L}}$. In the case of $n = 0$, we shorten the notation by omitting $n$ and $\mathbf{L}$. When $n \ge 1$ but $\mathbf{L} = \mathbf{0}^n$ we omit $\mathbf{L}$ in the notation.

\paragraph{Recursion formula for tight volumes.}
 We introduce
 \begin{align}\label{def_Z}
     Z(r,\mu) = \frac{\sqrt{r}}{\sqrt{2}\pi}J_1(2\pi \sqrt{2r}) - \mu. 
 \end{align}
We also define $R(\mu)$ the unique formal power series in $\mu$ such that $Z(R(\mu),\mu) = 0$ and $R(0) = 0$. Then, we define
\begin{align}\label{moments}
    M_k(\mu) = \frac{\partial^{k+1}Z}{\partial r^{k+1}}\bigg(R(\mu),\mu\bigg) = \bigg(\frac{-\sqrt{2}\pi}{\sqrt{R(\mu)}}\bigg)^{k}J_k(2\pi \sqrt{2R(\mu)}),
\end{align}
where $J_k$ denotes the $k^{th}$ Bessel function of the first kind and the second equality follows from \cite[Equation $(23)$]{budd2023topological}. The moments $M_k$ will play an important role in the following. Let us define $\mu_c = \inf\{\mu \ge 0 \text{ : }M_0(\mu)=0\}$ . Note that $M_0(\mu) \underset{\mu \to \mu_c}{\rightarrow} 0$. We can now state the main result to compute the generating functions $T_{g,n}(\mathbf{L},\mu)$.
\begin{theorem}[\cite{budd2023topological}, Theorem $4$]\label{polynomial_recursion}
    For all $g,n \ge 0$ such that $n \ge 3$ for $g =0$ and $n\ge 1$ for $g=1$, and any $\mu \le \mu_c$, we have
    \begin{align*}
        T_{g,n}(\mathbf{L},\mu) = \frac{1}{M_0(\mu)^{2g-2+n}}\mathcal{P}_{g,n}\bigg(\mathbf{L},\frac{M_1(\mu)}{M_0(\mu)},\cdots,\frac{M_{3g-3+n}(\mu)}{M_0(\mu)}\bigg),
    \end{align*}
    where $\mathcal{P}_{g,n}(\mathbf{L},\mathbf{m})$ is a rational polynomial in $L_1^2,\cdots,L_n^{2},m_1,\cdots,m_{3g-3+n}$. This polynomial is symmetric and of degree $3g-3+n$ in $L_1^2,\cdots,L_n^{2}$ while $\mathcal{P}_{g,n}(\sqrt{\sigma}\mathbf{L},\sigma m_1,\cdots,\sigma^{3g-3+n}m_{3g-3+n})$ is homogeneous of degree $3g-3+n$ in $\sigma$. For $g \ge 0$, $n \ge 1$ such that $2g-3+n > 0$, the polynomial $\mathcal{P}_{g,n}(\mathbf{L},\mathbf{m})$ can be obtained from $\mathcal{P}_{g,n-1}(\mathbf{L},\mathbf{m})$ via the recursion relation
    \begin{align}\label{recursion_relation}
        &\mathcal{P}_{g,n}(\mathbf{L},\mathbf{m}) = \somme{p=1}{3g-4+n}{\bigg( m_{p+1} - \frac{L_1^{2p+2}}{2^{p+1}(p+1)!} - m_1m_p + \frac{1}{2}L_1^{2}m_p\bigg)\frac{\partial \mathcal{P}_{g,n-1}}{\partial m_p}(\mathbf{L}_{\widehat{\{1\}}},\mathbf{m})}\\
        &\qquad +(2g-3+n)\bigg(-m_1+\frac{1}{2}L_1^2\bigg)\mathcal{P}_{g,n-1}(\mathbf{L}_{\widehat{\{1\}}},\mathbf{m}) + \somme{i=2}{n}{\inte{0}{L_i}{x\mathcal{P}_{g,n-1}(x,\mathbf{L}_{\widehat{\{1,i\}}},\mathbf{m})}{x}}.\nonumber
    \end{align}
Furthermore, we have:
\begin{align}\label{polynomial_base_case}
    &\mathcal{P}_{0,3}(\mathbf{L},\mathbf{m}) = 1,\\
    &\mathcal{P}_{1,1}(\mathbf{L},\mathbf{m}) = \frac{1}{24}(-m_1+\frac{1}{2}L_1^{2}),
\end{align}
and $\mathcal{P}_{g,0}(\mathbf{m})$ is given by:
\begin{align}\label{Pg0_expression}
    \mathcal{P}_{g,0}(\mathbf{m}) = \somme{\substack{d_2,d_3\cdots \ge 0\\ \somme{k=2}{3g-2}{(k-1)d_k} = 3g-3}}{}{\langle \tau_{2}^{d_2}\cdots\tau_{3g-2}^{d_{3g-2}}\rangle_g \prod_{k=2}^{3g-2}\frac{(-m_{k-1})^{d_k}}{d_k!} },
\end{align}
where $\langle \tau_{2}^{d_2}\cdots\tau_{3g-2}^{d_{3g-2}}\rangle_g$ are the $\psi$-class intersection numbers on the moduli space $\mathcal{M}_{g,n}$ with $n = \somme{k}{}{d_k} \le 3g-3$ marked points.
\end{theorem}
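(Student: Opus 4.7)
The plan is to derive this recursion by combining Mirzakhani's topological recursion for the standard Weil-Petersson volumes $V_{g,n}(\mathbf{L})$ with a generating-function resummation in $\mu$ that weights the number of non-tight (cusp) boundaries by $\mu^p/p!$. The guiding heuristic is that summing over the cusps produces a closed Bessel-function structure $Z(r,\mu)$ whose saddle point is the disk weight $R(\mu)$ and whose higher derivatives give the moments $M_k(\mu)$; these then play the role of coordinates for a topological recursion on a shifted spectral curve, so that only finitely many of them survive at fixed Euler characteristic.

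First I would set up an adapted topological recursion at the level of $T_{g,n,p}(\mathbf{L},\mathbf{0}^p)$. Using that a boundary $\partial_1$ is tight iff no simple closed geodesic in its extended homotopy class (after capping $\partial_{n+1},\ldots,\partial_{n+p}$) is shorter, an analog of Mirzakhani's McShane-type peeling applies: one cuts off the pair of pants adjacent to $\partial_1$, and the two new boundaries are either tight geodesics on the remainder or get absorbed into fresh cusp-carrying components on the other side. Second, I would sum this recursion over $p$ with weight $\mu^p/p!$. The combinatorial-analytic miracle, inherited from the genus-zero case, is that the cusp generating series $\sum_p \mu^p V_{0,1+p}(L,\mathbf{0}^p)/p!$ admits a closed form whose analytic structure is precisely that of $Z(r,\mu)$ in \eqref{def_Z}, so that $R(\mu)$ and the moments $M_k(\mu)$ emerge from its critical-point analysis and match \eqref{moments}. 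Third, by induction on $2g-2+n$ one shows that all remaining $\mu$-dependence of $T_{g,n}(\mathbf{L},\mu)$ reorganizes as a polynomial in $L_i^2$ and the ratios $m_k=M_k/M_0$ with overall prefactor $M_0^{-(2g-2+n)}$ reflecting the Euler characteristic; the homogeneity of $\mathcal{P}_{g,n}(\sqrt{\sigma}\mathbf{L},\sigma m_1,\ldots,\sigma^{3g-3+n} m_{3g-3+n})$ in $\sigma$ of degree $3g-3+n$ is then a direct consequence of the scaling covariance of Mirzakhani's original recursion under $\mathbf{L}\mapsto \sqrt{\sigma}\mathbf{L}$.

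The main obstacle is the explicit algebraic step showing that, after resummation over $p$, the length-kernels in Mirzakhani's integration formula collapse precisely to the polynomial coefficient $m_{p+1} - m_1 m_p + \tfrac{1}{2}L_1^2 m_p - L_1^{2p+2}/(2^{p+1}(p+1)!)$ of \eqref{recursion_relation}, together with the kernel multiplying $\mathcal{P}_{g,n-1}(\mathbf{L}_{\widehat{\{1\}}},\mathbf{m})$ and the integral term over $L_i$. This requires Bessel-function identities at $r=R(\mu)$ combined with careful tracking of how the derivatives $\partial\mathcal{P}_{g,n-1}/\partial m_p$ arise from differentiating the Boltzmann sum with respect to the critical parameter. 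Once this step is established, the base cases $\mathcal{P}_{0,3}=1$ and $\mathcal{P}_{1,1}=\tfrac{1}{24}(-m_1+\tfrac{1}{2}L_1^2)$ are verified by direct computation of $V_{0,3+p}$ and $V_{1,1+p}$, and the formula \eqref{Pg0_expression} for $\mathcal{P}_{g,0}$ follows by substituting Mirzakhani's expression of $V_{g,p}(\mathbf{0}^p)$ in terms of $\psi$-class intersection numbers into the Boltzmann sum and re-expanding in the moments $M_k(\mu)$, with the dilaton/string relations ensuring that only the $d_k$ with $k\ge 2$ contribute nontrivially to the stated sum.
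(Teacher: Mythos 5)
The theorem you are trying to prove is not proved anywhere in this paper: it is explicitly attributed to \cite{budd2023topological}, Theorem~4, and simply quoted here as an imported black box. So there is no in-paper argument against which to check your steps; the only possible comparison is with the external reference, which you have not seen and which I should not simply paraphrase. Taking your writeup at face value as a free-standing attempt, it does not constitute a proof, for two reasons.

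First, and most importantly, you name the crux of the matter yourself and then leave it open. You write that ``the main obstacle is the explicit algebraic step'' in which the length-kernels of Mirzakhani's integration formula, after resumming over $p$ with weight $\mu^p/p!$, are to collapse to the specific coefficient $m_{p+1}-m_1m_p+\tfrac12 L_1^2 m_p - L_1^{2p+2}/(2^{p+1}(p+1)!)$ multiplying $\partial\mathcal{P}_{g,n-1}/\partial m_p$, together with the dilaton-type term and the $\int_0^{L_i}$ string-type term. Every identification you then make (that the base cases check out, that the $\mathcal{P}_{g,0}$ expression follows) is predicated on ``once this step is established.'' A plan that names the central lemma and defers it is not an argument; it is a statement of intent.

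Second, there is a structural mismatch that suggests the plan, even if pursued, would not directly land on \eqref{recursion_relation}. The recursion you are aiming at is a \emph{fixed-genus} recursion relating $\mathcal{P}_{g,n}$ to $\mathcal{P}_{g,n-1}$: it removes one boundary while keeping $g$ fixed, and its right-hand side involves only $\mathcal{P}_{g,n-1}$ and its $m_p$-derivatives. A McShane-type peeling of the pair of pants adjacent to $\partial_1$, by contrast, produces contributions from $T_{g-1,n+1,\cdot}$ (non-separating arc) and from products $T_{g_1,n_1,\cdot}\,T_{g_2,n_2,\cdot}$ (separating arcs), i.e.\ it decreases $2g-2+n$ and mixes genera. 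Resumming the cusps does not by itself erase those topologically distinct terms. To arrive at a recursion of the shape \eqref{recursion_relation}, one needs an additional mechanism (a string/dilaton-type or moment-shift identity reorganizing the genus-changing contributions into the $\partial/\partial m_p$ derivatives acting on $\mathcal{P}_{g,n-1}$); your sketch does not articulate this passage, and attributing it to ``scaling covariance of Mirzakhani's recursion'' does not supply it. The homogeneity claim for $\mathcal{P}_{g,n}(\sqrt{\sigma}\mathbf{L},\sigma m_1,\ldots,\sigma^{3g-3+n}m_{3g-3+n})$ likewise needs an argument grounded in the known degree $3g-3+n$ polynomiality of $V_{g,n}$ in $L_i^2$ and in how the $M_k$ enter the resummation, not merely in a change of variables in the McShane identity. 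As written, the proposal is a reasonable heuristic roadmap but leaves precisely the nontrivial portion of the theorem unargued.

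Finally, one small technical remark: the generating series you invoke, $\sum_p \mu^p V_{0,1+p}(L,\mathbf{0}^p)/p!$, should start only at $p=2$ since $V_{0,m}$ is defined for $m\geq 3$; that shift is easily fixed but worth stating, since the closed form matching $Z(r,\mu)$ in \eqref{def_Z} depends on exactly which terms are included.
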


\section{The integration formula for random tight hyperbolic surfaces}\label{Integration_formula}

This section is dedicated to proving an integration formula similar to Mirzakhani's one but for the setting of random tight hyperbolic surfaces.

For the purpose of this section it is useful to fix a surface $\Sigma_{g,n}$ for every $g \geq 0, n\geq 0$ satisfying $2g+n\geq 3$ in such a way that $\Sigma_{g,n + p} \subset \Sigma_{g,n}$ for $p>0$ (see Figure~\ref{map_lambda}).
In other words, $\Sigma_{g,n}$ is the result of capping the last $p$ boundaries of $\Sigma_{g,n + p}$.
For a multicurve $\Gamma'$ on $\Sigma_{g,n+p}$ we denote its free homotopy class by $[\Gamma']_{g,n+p}$, and we observe that there is a natural inclusion $[\Gamma']_{g,n+p} \subset [\Gamma']_{g,n}$ and we call $[\Gamma']_{g,n}$ the extended free homotopy class of $\Gamma'$.
For $\Gamma$ a multicurve on $\Sigma_{g,n}$ we denote by  
$\mathcal{C}^{\Gamma}_{g,n+p} = \{ [\Gamma']_{g,n+p} : [\Gamma']_{g,n+p}\subset[\Gamma]_{g,n}\}$ the collection of free homotopy classes of multicurves on $\Sigma_{g,n+p}$ included in that of $\Gamma$ in $\Sigma_{g,n}$.
We begin by the geometric observation that multicurves can be tightened without introducing new intersections.

\begin{proposition}\label{tighten_multicurve}
    Let $X \in \mathcal{M}_{g,n+p}(\mathbf{L},\mathbf{K})$ and $\Gamma=(\gamma_1,\ldots,\gamma_k)$ be a multicurve on $\Sigma_{g,n+p}$ whose curves are simple and pairwise disjoint and not freely homotopic in $\Sigma_{g,n}$ to each other, to a boundary or to a point. For each $i=1,\ldots, k$, there exists a closed geodesic $\gamma_i' \in [\gamma_i]_{g,n}$ of minimal length. Then the multicurve $\Gamma' = \{\gamma_1',\ldots,\gamma_k'\}$ consists of simple and pairwise disjoint curves.
    Furthermore, $\Gamma'$ is tight or $X$ contains two simple closed geodesics of equal length that are not boundaries.
\end{proposition}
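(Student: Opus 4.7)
The plan is to establish the three assertions in turn: existence of the minimal-length representative $\gamma_i'$ in each extended class, simplicity together with pairwise disjointness of $\Gamma'$, and finally the tightness dichotomy.

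First I would invoke the standard existence theorem for geodesic representatives on hyperbolic surfaces (Buser, Theorem~1.6.11, already cited in the paper). The hypothesis that $\gamma_i$ is not freely homotopic in $\Sigma_{g,n}$ to a boundary or a point says precisely that the extended class $[\gamma_i]_{g,n}$ is essential in the capped surface $\Sigma_{g,n}$; this rules out the degenerate cases (infimum zero or infimum attained at a boundary component) and guarantees that the length function on $[\gamma_i]_{g,n}$ attains a strictly positive minimum, realized by an honest closed geodesic $\gamma_i'$ on $X$.

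For simplicity and disjointness I would use the hyperbolic bigon principle. Because $\Gamma$ is a multicurve in $\Sigma_{g,n+p}$, the curves $\gamma_i$ are simple and pairwise disjoint in $\Sigma_{g,n+p}$, hence also when viewed in the larger surface $\Sigma_{g,n}$. Consequently the geometric self-intersection of each class $[\gamma_i]_{g,n}$ is zero, and the geometric intersection number of $[\gamma_i]_{g,n}$ with $[\gamma_j]_{g,n}$ for $i\neq j$ is zero (and these two classes are distinct and nontrivial by assumption). If $\gamma_i'$ were not simple, or if $\gamma_i'$ and $\gamma_j'$ crossed, the vanishing geometric intersection numbers together with the bigon criterion would produce an embedded monogon or bigon on $\Sigma_{g,n}$ whose boundary is made of geodesic arcs of $X$. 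The Gauss--Bonnet formula in the hyperbolic metric forbids such a disk (a geodesic mono- or bigon would require total boundary turning greater than $2\pi$ on a negatively curved disk). Hence $\Gamma'$ is simple and pairwise disjoint.

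For the dichotomy, recall that by the paper's definition $\Gamma'$ is tight exactly when each $\gamma_i'$ is the \emph{unique} minimizer of length in $[\gamma_i]_{g,n}$. If $\Gamma'$ is not tight, some $[\gamma_i]_{g,n}$ admits a second minimizer $\gamma_i''\neq \gamma_i'$; applying the argument of the previous step to the pair $(\gamma_i',\gamma_i'')$ (they lie in the same extended class but are distinct geodesics of equal length, so they have geometric intersection zero in $\Sigma_{g,n}$ against any simple representative and zero self-intersection) shows that $\gamma_i''$ is again a simple closed geodesic on $X$. Neither $\gamma_i'$ nor $\gamma_i''$ can be a boundary component of $X$, since $\gamma_i$ is not boundary-parallel in $\Sigma_{g,n}$ by hypothesis, and the first $n$ boundaries of $\Sigma_{g,n+p}$ inject into $\Sigma_{g,n}$. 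Thus we obtain the two equal-length simple closed geodesics required by the alternative. The main technical step is the bigon argument: it requires carefully translating the disjointness of $\Gamma$ in $\Sigma_{g,n+p}$ into vanishing of geometric intersection numbers in $\Sigma_{g,n}$, and then applying the hyperbolic no-bigon principle to length-minimizing representatives.
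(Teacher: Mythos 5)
The Gauss--Bonnet step is where your argument breaks down, and it is precisely the subtlety that the paper's proof is designed to address. You invoke the bigon criterion in $\Sigma_{g,n}$: since $i\bigl([\gamma_i']_{g,n},[\gamma_j']_{g,n}\bigr)=0$, intersecting geodesic representatives must cobound an embedded bigon in $\Sigma_{g,n}$. But $\Sigma_{g,n}$ is the \emph{capped} surface, not $X$, and the bigon produced by the criterion may enclose one or more of the capped-off boundaries $\partial_{n+1},\ldots,\partial_{n+p}$. When that happens, the corresponding region inside $X$ is not a disk with two geodesic sides but a bigon with a cusp or boundary component in its interior, and such geodesic ``bigons-with-holes'' do occur in hyperbolic surfaces: for an ideal vertex inside the bigon, Gauss--Bonnet on the punctured disk gives $\mathrm{Area} = 2\pi - \theta_1 - \theta_2$, which is perfectly consistent. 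So the no-geodesic-bigon argument that works so cleanly for ordinary closed hyperbolic surfaces does not rule out the configuration you need to rule out here. To close the gap you would instead need the length-exchange argument (replace the longer side of the bigon by the shorter, tighten, and derive a strict decrease of length in the extended class), or, as the paper does, cap off the last $p$ boundaries of $X$ with honest Riemannian disks chosen to introduce no shortcuts, and then appeal to Freedman--Hass--Scott (their Theorem~2.1 and Corollary~3.4 are stated for general Riemannian $2$-manifolds and deliver both the simplicity/disjointness and the dichotomy).

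A secondary issue: your existence step invokes Buser~1.6.11, which produces a geodesic minimizer within a \emph{single} free homotopy class of $X$. But $[\gamma_i]_{g,n}$ is typically an infinite union of free homotopy classes of $\Sigma_{g,n+p}$ (for instance, Dehn twists about the capped boundaries preserve the extended class while changing the class in $\Sigma_{g,n+p}$), so you still need to argue that the infimum of lengths over this union is attained. The paper does this by observing that only finitely many closed geodesics in $X$ are shorter than $\ell_X(\gamma_i)$, so the minimum is realized; you should include a sentence to this effect rather than citing a theorem about a single class.
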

\begin{proof}
    We note that there are only finitely many closed geodesics in $X$ of length smaller than $\ell_X(\gamma_i)$ so a closed geodesic $\gamma_i'$ of minimal length in $[\gamma_i]_{g,n}$ necessarily exists.
    The claim that the length-minimizing curves do not have any intersections holds more generally for Riemannian 2-manifolds so instead of giving a self-contained proof, we refer to \cite[Theorem~2.1 \& Corollary~3.4]{freedman1982}.
    To fulfill their conditions, we smoothly cap off the last $p$ boundaries of $X$ with disks, making sure not to introduce shortcuts through the interior of the disk between pairs of points on each boundary.
    In case we are dealing with a cusp, we remove a sufficiently small horocyclic neighbourhood which we then cap off.
    The geodesics $\gamma_i'$ will be length-minimizing in the resulting Riemmanian manifold as well.
    Thus we may apply their results, taking into account that the remaining boundaries are convex (see \cite[\S 4]{freedman1982}), to deduce that the length-minimizing geodesics are simple and pairwise disjoint.

    If the length-minimizing geodesic $\gamma_i'$ is unique, then it is a tight geodesic by definition.
    Otherwise, by \cite[Corollary~3.4]{freedman1982}, $X$ supports a pair of disjoint simple closed geodesics of equal length in $[\gamma_i]_{g,n}$.
    This verifies the final statement.
\end{proof}

\begin{figure}[H]
    \centering
    \includegraphics[scale = 0.3]{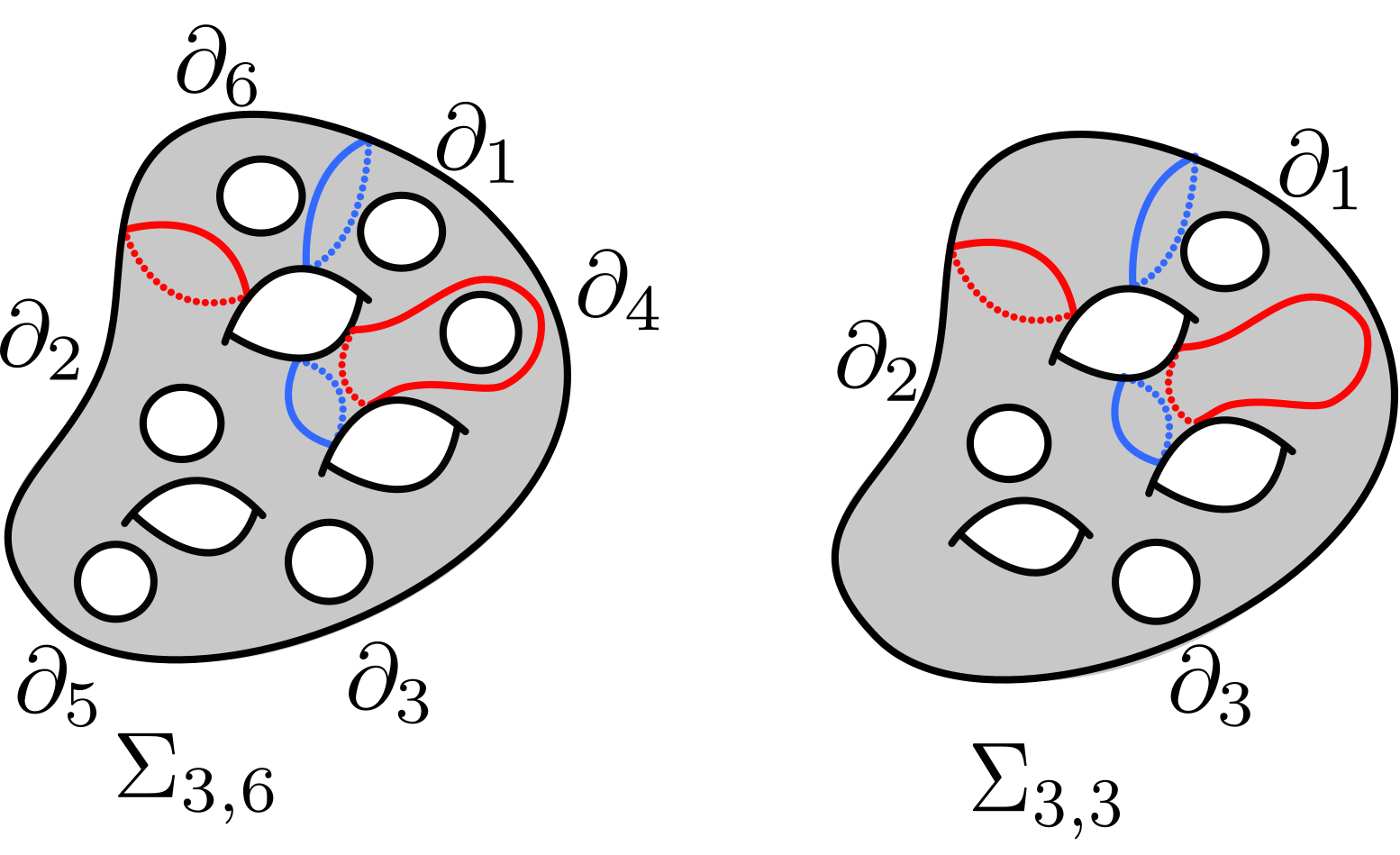}
    \caption{On this example we take $n=3$ and $p=3$. The surface $\Sigma_{3,6}$ is a subsurface of $\Sigma_{3,3}$. Call $\Gamma$ the multicurve consisting of the blue curves and $\Gamma^{'}$ the multicurve consisting of the red curves. On this example we have $[\Gamma]_{3,6} \ne [\Gamma^{'}]_{3,6}$ but $[\Gamma^{'}]_{3,6} \in \mathcal{C}^{\Gamma}_{3,6}$.}
    \label{map_lambda}
\end{figure}

Suppose now $\Gamma = (\gamma_1,\ldots,\gamma_k)$ is a multi-curve on $\Sigma_{g,n}$ such that none of the curves $\gamma_1,\ldots,\gamma_k$ is freely homotopic to a boundary.
As before, we assume that cutting along $\Gamma$ decomposes $\Sigma_{g,n}$ into $q\geq 1$ components, $\Sigma_{g,n} \setminus \Gamma = \bigsqcup_{i=1}^q \Sigma_{g_i,n_i+k_i}$, where the $i$th component is a genus-$g_i$ surface with $n_i$ boundaries corresponding to boundaries of $\Sigma_{g,n}$ and $k_i$ boundaries corresponding to sides of curves in $\Gamma$.
If $\mathbf{L}\in \mathbb{R}_+^n$ and $\mathbf{x}\in \mathbb{R}_+^{k}$ are vectors indexed by respectively the boundaries of $\Sigma_{g,n}$ and the curves of $\Gamma$, we denote by $\mathbf{L}^{(i)} \in \mathbb{R}_+^{n_i}$ and $\mathbf{x}^{(i)} \in \mathbb{R}_+^{k_i}$ the associated vectors indexed by the boundaries of $\Sigma_{g_i,n_i+k_i}$.

For $p > 0$, $\mathbf{L}\in \mathbb{R}_+^n$, and $\mathbf{K}\in \mathbb{R}_+^p$, given a hyperbolic surface $X\in \mathcal{T}_{g,n+p}(\mathbf{L},\mathbf{K})$ we can define the \emph{tight length vector} of the multi-curve $\Gamma$ on $\Sigma_{g,n}$ to be
\begin{align*}
    \vec{\ell}_X^{\;\mathrm{tight}}(\Gamma) = (\ell_X^{\mathrm{tight}}(\gamma_1),\ldots,\ell_X^{\mathrm{tight}}(\gamma_k)),
\end{align*}
where $\ell_X^{\mathrm{tight}}(\gamma_i)$, $i=1,\ldots,k$, is the minimum of lengths $\ell_X(\gamma)$ of all closed geodesics $\gamma$ in $X$ that belong to the free homotopy class of $\gamma_i$ in $\Sigma_{g,n}$.


Given a function $f : \mathbb{R}_+^k\to \mathbb{R}_+$, let 
\begin{align*}
    F^\Gamma_{\mathrm{tight}}(X) = \sum_{\Gamma' \in \text{Orb}_{\mathrm{MCG}_{g,n}}(\Gamma)}  f\left(\vec{\ell}_X^{\;\mathrm{tight}}(\Gamma')\right).
\end{align*}

\begin{proposition} \label{prop:integrationformula}
Mirzakhani's integration formula generalizes to an integration formula on the moduli space of tight hyperbolic surfaces given by 
    \begin{align*}
        \int_{\mathcal{M}_{g,n,p}^{\mathrm{tight}}(\mathbf{L},\mathbf{K})} F^\Gamma_{\mathrm{tight}}(X) \mathrm{d}\mu_{g,n+p}^{\mathrm{WP}}(X) = C_{\Gamma}\int_{\mathbb{R}_+^k} x_1\cdots x_k f(\mathbf{x}) T_{g,n,p}(\mathbf{L},\mathbf{K},\Gamma,\mathbf{x}) \mathrm{d}x_1\cdots \mathrm{d}x_k,
    \end{align*}
    where $T_{g,n,p}(\mathbf{L},\mathbf{K},\Gamma,\mathbf{x})$ is defined with reference to the decomposition $\Sigma_{g,n} \setminus \Gamma = \bigsqcup_{i=1}^q \Sigma_{g_i,n_i+k_i}$ as 
    \begin{align*}
        T_{g,n,p}(\mathbf{L},\mathbf{K},\Gamma,\mathbf{x}) = \sum_{I_1\sqcup \cdots\sqcup I_q = \{1,\ldots,p\}} \prod_{i=1}^q T_{g_i,n_i+k_i,|I_i|}(\mathbf{L}^{(i)},\mathbf{x}^{(i)},\mathbf{K}_{I_i}).
    \end{align*}
    The constant $C_{\Gamma}$ only depends on $\Gamma$. We have $C_{\Gamma} \le 1$ and if $\Sigma_{g,n}\backslash \Gamma$ is connected, then $C_{\Gamma} = 2^{-k}$.
\end{proposition}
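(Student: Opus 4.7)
The plan is to reduce to Mirzakhani's classical integration formula by reindexing the sum defining $F^{\Gamma}_{\mathrm{tight}}(X)$ so that it involves geodesic lengths of multicurves on $\Sigma_{g,n+p}$ rather than tight lengths of extended homotopy classes on $\Sigma_{g,n}$. First, I would note that for $\mu^{\mathrm{WP}}_{g,n+p}$-almost every $X \in \mathcal{M}^{\mathrm{tight}}_{g,n,p}(\mathbf{L},\mathbf{K})$ all simple closed geodesics of $X$ have pairwise distinct lengths. Combined with Proposition~\ref{tighten_multicurve}, this guarantees that each $\Gamma' \in \mathrm{Orb}_{\mathrm{MCG}_{g,n}}(\Gamma)$ admits a unique length-minimizing representative $\widetilde{\Gamma}'$ in its extended free homotopy class, which is a simple geodesic multicurve on $\Sigma_{g,n+p}$ with $\vec{\ell}_X(\widetilde{\Gamma}') = \vec{\ell}_X^{\;\mathrm{tight}}(\Gamma')$. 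The assignment $\Gamma' \mapsto \widetilde{\Gamma}'$ is injective, and its image consists exactly of the simple multicurves $\widetilde{\Gamma}$ on $\Sigma_{g,n+p}$ that are tight and satisfy $[\widetilde{\Gamma}]_{g,n} \in \mathrm{Orb}_{\mathrm{MCG}_{g,n}}(\Gamma)$.

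Next, I would decompose this image into $\mathrm{MCG}_{g,n+p}$-orbits. Since $\mathrm{MCG}_{g,n}$ can permute the $p$ capped-off marked points while $\mathrm{MCG}_{g,n+p}$ fixes their labels, these orbits are naturally parameterized by the set partitions $I_1 \sqcup \cdots \sqcup I_q = \{1,\ldots,p\}$ describing how the $p$ extra cusps distribute among the $q$ components of $\Sigma_{g,n}\setminus\Gamma$; let $\widetilde{\Gamma}_{I}$ denote a chosen representative for each. Applying Mirzakhani's classical integration formula to each such orbit and integrating over $\mathcal{M}_{g,n+p}(\mathbf{L},\mathbf{K})$ yields a contribution
\begin{align*}
C_\Gamma \int_{\mathbb{R}_+^k} x_1\cdots x_k\, f(\mathbf{x}) \prod_{i=1}^q V_{g_i,n_i+k_i+|I_i|}(\mathbf{L}^{(i)},\mathbf{x}^{(i)},\mathbf{K}_{I_i})\,\mathrm{d}\mathbf{x},
\end{align*}
with the usual Mirzakhani constant $C_\Gamma$ depending only on the topology of $\Gamma$ (equal to $2^{-k}$ in the connected case, with extra symmetry factors otherwise).

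To pass to the restricted integration over $\mathcal{M}^{\mathrm{tight}}_{g,n,p}$ and to replace each $V$-factor by the corresponding $T$-factor, I would use the key equivalence: given a simple geodesic multicurve $\widetilde{\Gamma}'$ on $X \in \mathcal{M}_{g,n+p}(\mathbf{L},\mathbf{K})$ in the $\mathrm{MCG}_{g,n+p}$-orbit of $\widetilde{\Gamma}_{I}$, the surface $X$ lies in $\mathcal{M}^{\mathrm{tight}}_{g,n,p}$ \emph{and} $\widetilde{\Gamma}'$ is a tight multicurve in $X$ if and only if each component of $X\setminus\widetilde{\Gamma}'$ lies in the corresponding tight moduli space $\mathcal{M}^{\mathrm{tight}}_{g_i,n_i+k_i,|I_i|}$. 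Via the Fenchel--Nielsen factorization of the Weil--Petersson form along the length coordinates of $\widetilde{\Gamma}'$ used in Mirzakhani's derivation, this equivalence translates each $V_{g_i,\cdots}$ into $T_{g_i,\cdots}$. Summing over all set partitions $I_1 \sqcup \cdots \sqcup I_q = \{1,\ldots,p\}$ then reconstructs the definition of $T_{g,n,p}(\mathbf{L},\mathbf{K},\Gamma,\mathbf{x})$ and yields the identity.

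The main obstacle will be the careful verification of the equivalence in the last step, especially the direction ``tight pieces glued along a tight multicurve give a surface in which that multicurve is tight''. One has to rule out the possibility that an extended-class curve in $X$ which crosses $\widetilde{\Gamma}'$ could be shorter than any curve confined to a single piece; this should follow from a cut-and-paste argument that decomposes any candidate shorter curve into arcs in the pieces and closes them up against representatives of $\widetilde{\Gamma}'$ to obtain a shorter extended-class representative within some piece, contradicting tightness there. A secondary care point is the correct bookkeeping of the combinatorial constant $C_\Gamma$ when the decomposition $\Sigma_{g,n}\setminus\Gamma$ carries nontrivial automorphisms, which is handled exactly as in the classical Mirzakhani setting.
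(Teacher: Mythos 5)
Your approach is essentially the same as the paper's: you reindex $F^\Gamma_{\mathrm{tight}}$ by the almost-surely unique tight geodesic representatives on $\Sigma_{g,n+p}$, decompose into $\mathrm{MCG}_{g,n+p}$-orbits parameterized by the partitions $I_1\sqcup\cdots\sqcup I_q$, unfold via the Fenchel--Nielsen cutting symplectomorphism, and invoke the same key equivalence (tightness of the multicurve together with tightness of $X$'s boundaries $\Leftrightarrow$ tightness of the boundaries of each cut piece), proving it by a cut-and-paste (bigon) argument. The paper packages the reindexing through intermediate moduli spaces $\mathcal{M}_{g,n+p}(\mathbf{L},\mathbf{K})^{[\Gamma]_{g,n}}$ and $\hat{\mathcal{M}}^{\mathrm{tight}}_{g,n,p}(\mathbf{L},\mathbf{K})^{[\Gamma']_{g,n+p}}$ rather than directly via orbits of multicurves, but the substance, including the identification of the crucial tightness-splitting lemma, is the same.
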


The proof of this formula relies on a specific symplectomorphism introduced in the work \cite[Sec.~8]{Mirzakhani_polynomial_volumes} of Mirzakhani and on a specialisation to the tight case discussed in \cite[Sec.~2.3]{budd2023topological}, which we recap here.
For the moment we ignore the tightness condition and focus on the ordinary moduli space $\mathcal{M}_{g,n}(\mathbf{L})$.
We consider the stabilizer subgroup 
\begin{align*}
    \mathrm{Stab}_{g,n}(\Gamma) = \{ h\in \text{MCG}_{g,n} : h \cdot \gamma_i = \gamma_i, \,i=1,\ldots,k\} \subset \text{MCG}_{g,n}
\end{align*}
of the multi-curve $\Gamma$, which gives rise to the moduli space 
\begin{align*}
    \mathcal{M}_{g,n}(\mathbf{L})^{\Gamma} = \mathcal{T}_{g,n}(\mathbf{L}) / \mathrm{Stab}_{g,n}(\Gamma).
\end{align*}
of hyperbolic surfaces carrying $\Gamma$ as a distinguished multi-curve.
Let $\mathcal{M}_{g,n}(\mathbf{L})^{\Gamma}(\mathbf{x})$ for $\mathbf{x}\in\mathbb{R}_+^k$ be the level set of the geodesic length function, i.e.\ the set of surfaces $X$ in which the geodesic representatives of the distinguished curves have prescribed lengths $\ell_X(\gamma_i) = x_i$ for $i=1,\ldots,k$.
This space admits a natural $k$-dimensional torus action via twisting along the distinguished curves, and the quotient space is denoted $\mathcal{M}_{g,n}(\mathbf{L})^{\Gamma*}(\mathbf{x})$.
It inherits a symplectic structure from the Weil--Petersson symplectic structure on $\mathcal{T}_{g,n}$.
If $\Sigma_{g,n}\setminus \Gamma$ decomposes as $\bigsqcup_{i=1}^q \Sigma_{g_i,n_i+k_i}$, then cutting $X$ along $\Gamma$ determines a canonical mapping 
\begin{align}
    \mathcal{M}_{g,n}(\mathbf{L})^{\Gamma*}(\mathbf{x})  \longrightarrow \prod_{i=1}^q \mathcal{M}_{g_i,n_i+k_i}(\mathbf{L}^{(i)},\mathbf{x}^{(i)}).\label{eq:cuttingsymplectomorphism}
\end{align}
According to \cite[Lemma~8.3]{Mirzakhani_polynomial_volumes} this is a symplectomorphism, so one may easily relate integration on one side to integration on the other.

Returning to the tight setting, we note that by extension of homeomorphisms on $\Sigma_{g,n+p}$ to $\Sigma_{g,n}$ one obtains a natural morphism $\lambda_{g,n,p} : \text{MCG}_{g,n+p} \to \text{MCG}_{g,n}$ of the corresponding mapping class groups.
We consider the subgroup $\lambda_{g,n,p}^{-1}(\mathrm{Stab}_{g,n}(\Gamma)) \subset \text{MCG}_{g,n+p}$ of mapping class group elements on $\Sigma_{g,n+p}$ that preserve the free homotopy class of $\Gamma$ in $\Sigma_{g,n}$.
We may then think of 
\begin{align*}
    \mathcal{M}_{g,n+p}(\mathbf{L},\mathbf{K})^{[\Gamma]_{g,n}} \coloneqq \mathcal{T}_{g,n+p}(\mathbf{L}) / \lambda^{-1}_{g,n,p}(\mathrm{Stab}_{g,n}(\Gamma))
\end{align*}
as the moduli space of hyperbolic surfaces with a distinguished extended free homotopy class of multi-curves $\Gamma$, as opposed to the previously introduced moduli space
\begin{align*}
    \mathcal{M}_{g,n+p}(\mathbf{L},\mathbf{K})^{[\Gamma']_{g,n+p}} = \mathcal{T}_{g,n+p}(\mathbf{L}) / \mathrm{Stab}_{g,n+p}(\Gamma')
\end{align*}
of surfaces with a distinguished multi-curve $\Gamma'$ on $\Sigma_{g,n+p}$.
Note that if $\Gamma'\in\mathcal{C}_{g,n+p}^\Gamma$, then $\mathrm{Stab}_{g,n+p}(\Gamma')$ is a subgroup of $\lambda^{-1}_{g,n,p}(\mathrm{Stab}_{g,n}(\Gamma))$ and we have a natural mapping
\begin{align}
\mathcal{M}_{g,n+p}(\mathbf{L},\mathbf{K})^{[\Gamma']_{g,n+p}} \to \mathcal{M}_{g,n+p}(\mathbf{L},\mathbf{K})^{[\Gamma]_{g,n}}\label{eq:forgethomotopy}
\end{align}
that only retains the extended homotopy class of $\Gamma'$.
\begin{figure}[H]
    \centering
    \includegraphics[scale = 0.35]{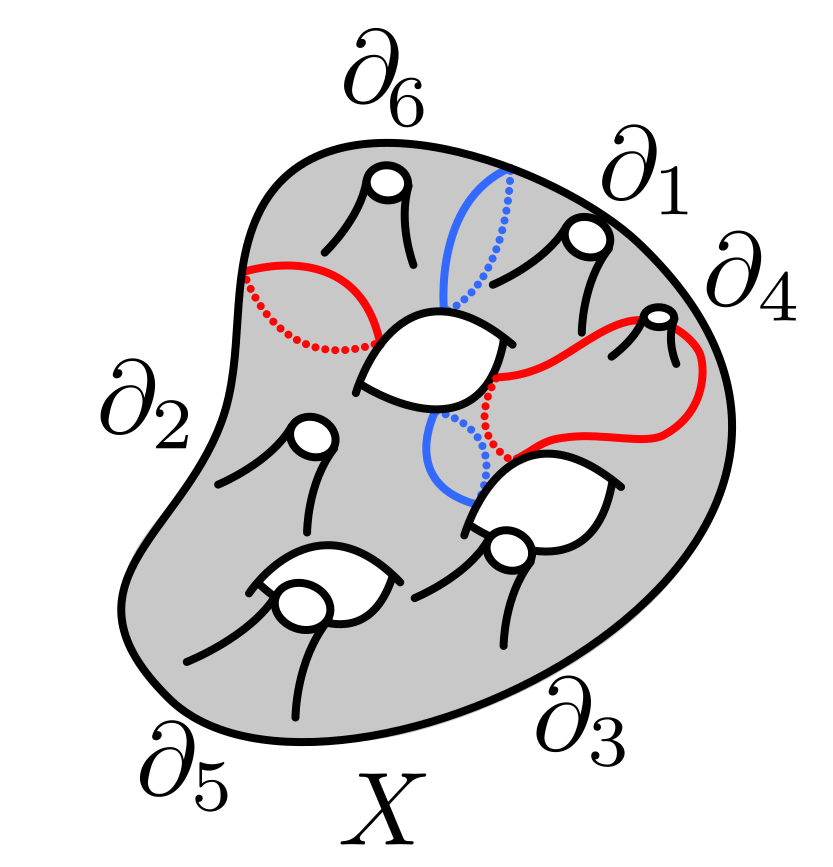}
    \caption{On this example we take $n=3$ and $p=3$. Call $\Gamma$ the multicurve consisting of the blue curves and $\Gamma^{'}$ the multicurve consisting of the red curves. We have $(X,\Gamma^{'}) \notin \mathcal{M}_{3,6}(\mathbf{L},\mathbf{K})^{[\Gamma]_{3,6}}$ but $(X,\Gamma^{'}) \in \mathcal{M}_{3,6}(\mathbf{L},\mathbf{K})^{[\Gamma]_{3,3}}$.}
    \label{fig:enter-label}
\end{figure}

In both cases one may restrict the hyperbolic surfaces to have $n$ tight boundaries, giving rise to
\begin{align*}
    \mathcal{M}^{\mathrm{tight}}_{g,n,p}(\mathbf{L},\mathbf{K})^{[\Gamma]_{g,n}} \subset \mathcal{M}_{g,n+p}(\mathbf{L},\mathbf{K})^{[\Gamma]_{g,n}},\qquad \mathcal{M}^{\mathrm{tight}}_{g,n,p}(\mathbf{L},\mathbf{K})^{[\Gamma']_{g,n+p}} \subset \mathcal{M}_{g,n+p}(\mathbf{L},\mathbf{K})^{[\Gamma']_{g,n+p}}.
\end{align*}
For a hyperbolic surface $X\in \mathcal{M}^{\mathrm{tight}}_{g,n,p}(\mathbf{L},\mathbf{K})^{[\Gamma']_{g,n+p}}$ we say that $\gamma_i'$ is tight if its geodesic representative is the unique curve of minimal length among all curves in its extended free homotopy class $[\gamma_i']_{g,n}$.
This allows us to further restrict the distinguished multicurve to be tight,
\begin{align*}
    \hat{\mathcal{M}}^{\mathrm{tight}}_{g,n,p}(\mathbf{L},\mathbf{K})^{[\Gamma']_{g,n+p}} \coloneqq \{ X\in \mathcal{M}^{\mathrm{tight}}_{g,n,p}(\mathbf{L},\mathbf{K})^{[\Gamma']_{g,n+p}}: \gamma_1',\ldots,\gamma_k'\text{ tight}\}.
\end{align*}

Finally, if $\Gamma' \in \mathcal{C}^{\Gamma}_{g,n+p}$ is a multicurve on $\Sigma_{g,n+p}$ that belongs to $[\Gamma]_{g,n}$, then the same holds for $h \cdot \Gamma' \in \mathcal{C}^{\Gamma}_{g,n+p}$ when $h \in \lambda_{g,n,p}^{-1}(\mathrm{Stab}_{g,n}(\Gamma))$, so we may construct the quotient $\mathcal{C}^{\Gamma}_{g,n+p}/ \lambda_{g,n,p}^{-1}(\mathrm{Stab}_{g,n}(\Gamma))$.
Our first lemma expresses the intuitive fact that a typical surface with a distinguished extended homotopy class of multicurves also contains a distinguished tight multicurve.
\begin{lemma}\label{lem:distinguishedtight}
    The mapping \eqref{eq:forgethomotopy} gives rise to an injection
    \begin{align*}
        \bigsqcup_{\Gamma'\in \mathcal{C}^{\Gamma}_{g,n+p}/ \lambda_{g,n,p}^{-1}(\mathrm{Stab}_{g,n}(\Gamma))} \hat{\mathcal{M}}^{\mathrm{tight}}_{g,n,p}(\mathbf{L},\mathbf{K})^{[\Gamma']_{g,n+p}} \longrightarrow \mathcal{M}^{\mathrm{tight}}_{g,n,p}(\mathbf{L},\mathbf{K})^{[\Gamma]_{g,n}}
    \end{align*}
    with image of full Weil-Petersson measure.
\end{lemma}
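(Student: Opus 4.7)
The plan is to exhibit the map on each summand as a tightening/forgetful operation, then use the uniqueness of tight representatives in an extended free homotopy class to obtain injectivity, and finally to combine Proposition~\ref{tighten_multicurve} with a standard analyticity argument to cover all but a measure-zero subset of the target. Since $[\Gamma']_{g,n+p}\subset [\Gamma]_{g,n}$ for any $\Gamma'\in \mathcal{C}^{\Gamma}_{g,n+p}$, the forgetful map \eqref{eq:forgethomotopy} sends $\hat{\mathcal{M}}^{\mathrm{tight}}_{g,n,p}(\mathbf{L},\mathbf{K})^{[\Gamma']_{g,n+p}}$ into $\mathcal{M}^{\mathrm{tight}}_{g,n,p}(\mathbf{L},\mathbf{K})^{[\Gamma]_{g,n}}$; replacing $\Gamma'$ by $h\cdot\Gamma'$ with $h\in \lambda_{g,n,p}^{-1}(\mathrm{Stab}_{g,n}(\Gamma))$ yields canonically isomorphic summands with identical image, which justifies taking the quotient in the disjoint union.

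For injectivity, I would argue that if $X_1$ from the summand $[\Gamma'_1]_{g,n+p}$ and $X_2$ from the summand $[\Gamma'_2]_{g,n+p}$ have the same image, then at the level of Teichmüller space some $h\in \lambda_{g,n,p}^{-1}(\mathrm{Stab}_{g,n}(\Gamma))$ carries a lift $\tilde X_1$ to $\tilde X_2$. On $\tilde X_2$ both $h\cdot\Gamma'_1$ and $\Gamma'_2$ are then tight multi-curves realizing the extended homotopy class $[\Gamma]_{g,n}$. By the defining property of $\hat{\mathcal{M}}^{\mathrm{tight}}$, each component of a tight multi-curve is the \emph{unique} length minimizer in its extended free homotopy class, so $h\cdot\Gamma'_1$ and $\Gamma'_2$ coincide as free homotopy classes on $\Sigma_{g,n+p}$. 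Hence $[\Gamma'_1]_{g,n+p}$ and $[\Gamma'_2]_{g,n+p}$ lie in the same orbit under $\lambda_{g,n,p}^{-1}(\mathrm{Stab}_{g,n}(\Gamma))$, and the two inputs already coincide in the disjoint union. For the full-measure claim, I would apply Proposition~\ref{tighten_multicurve} to every $X\in \mathcal{M}^{\mathrm{tight}}_{g,n,p}(\mathbf{L},\mathbf{K})^{[\Gamma]_{g,n}}$ to obtain a simple, pairwise disjoint length-minimizing multi-curve $\Gamma'\in \mathcal{C}^{\Gamma}_{g,n+p}$; away from the locus of surfaces carrying two distinct simple closed geodesics of equal length, each minimizer is unique, so $\Gamma'$ is tight and exhibits $X$ as lying in the image of the summand indexed by $[\Gamma']_{g,n+p}$.

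The main obstacle I anticipate is making the measure-zero claim airtight: the exceptional locus is a countable union, over pairs $\{c_1,c_2\}$ of distinct simple free homotopy classes, of level sets of the form $\{\ell_X(c_1)=\ell_X(c_2)\}$. Each such set is a real-analytic subvariety of $\mathcal{T}_{g,n+p}$, and I would verify via Fenchel--Nielsen coordinates that the difference $\ell_X(c_1)-\ell_X(c_2)$ is non-constant --- twisting along $c_1$ changes the length of any simple closed curve transverse to $c_1$ while fixing $\ell_X(c_1)$, producing a direction along which the difference is non-zero. Consequently each level set is a proper real-analytic subvariety, hence Weil--Petersson negligible, and the countable union inherits measure zero, which completes the argument.
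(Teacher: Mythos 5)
Your overall architecture matches the paper's: well-definedness of the quotient map, surjectivity up to measure zero via Proposition~\ref{tighten_multicurve}, and injectivity from uniqueness of the tight representative. The injectivity argument is correct and slightly more detailed than what the paper writes. However, your justification of the measure-zero claim has a gap in precisely the case that matters. You argue that $\ell_X(c_1)-\ell_X(c_2)$ is non-constant because twisting along $c_1$ changes the length of curves \emph{transverse} to $c_1$ while fixing $\ell_X(c_1)$. But in the situation supplied by Proposition~\ref{tighten_multicurve} (and the reference to Freedman--Hass--Scott in its proof), the pair of equal-length simple closed geodesics obstructing tightness is \emph{disjoint}; a Fenchel--Nielsen twist along $c_1$ then fixes $\ell_X(c_2)$ as well, so the direction you propose is in the kernel of the differential of $\ell_X(c_1)-\ell_X(c_2)$ and does not witness non-constancy. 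The repair is standard: since $c_1$ and $c_2$ are disjoint, essential, non-boundary-parallel, and mutually non-homotopic, they extend to a pants decomposition of $\Sigma_{g,n+p}$, and in the associated Fenchel--Nielsen coordinates $\ell_X(c_1)$ and $\ell_X(c_2)$ are two \emph{independent} length coordinates; the level set $\{\ell_X(c_1)=\ell_X(c_2)\}$ is then a proper codimension-one submanifold of $\mathcal{T}_{g,n+p}(\mathbf{L},\mathbf{K})$, hence Weil--Petersson null, and the countable union over pairs is null as well. With this substitution, your proof is complete and equivalent to the paper's.
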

\begin{proof}
    The mapping is well-defined, because for an element $h\in \lambda_{g,n,p}^{-1}(\mathrm{Stab}_{g,n}(\Gamma))$ the moduli spaces $\hat{\mathcal{M}}^{\mathrm{tight}}_{g,n,p}(\mathbf{L},\mathbf{K})^{[\Gamma']_{g,n+p}}$ and $\hat{\mathcal{M}}^{\mathrm{tight}}_{g,n,p}(\mathbf{L},\mathbf{K})^{[h\cdot \Gamma']_{g,n+p}}$ have identical image under \eqref{eq:forgethomotopy}.
    By Proposition~\ref{tighten_multicurve}, for each $X \in \mathcal{M}^{\mathrm{tight}}_{g,n,p}(\mathbf{L},\mathbf{K})^{[\Gamma]_{g,n}}$ there exists a length-minimizing multi-curve $\Gamma' \in \mathcal{C}_{g,n+p}^\Gamma$ whose curves are simple and pairwise disjoint.
    Moreover $\Gamma'$ is tight unless $X$ supports a pair of disjoint simple closed geodesics of equal length.
    The subset of hyperbolic surfaces having such a pair in $\mathcal{M}^{\mathrm{tight}}_{g,n,p}(\mathbf{L},\mathbf{K})^{[\Gamma]_{g,n}}$ corresponds to a countable union of co-dimension 1 sub-manifolds. 
    In particular, its complement, comprising of the surfaces with tight multi-curve $\Gamma'$, in $\mathcal{M}^{\mathrm{tight}}_{g,n,p}(\mathbf{L},\mathbf{K})^{[\Gamma]_{g,n}}$ is of full Weil-Petersson measure.
    If $X$ belongs to the latter, then $\Gamma'\in\mathcal{C}^{\Gamma}_{g,n+p}$ is tight and $(X,\Gamma')\in \hat{\mathcal{M}}^{\mathrm{tight}}_{g,n,p}(\mathbf{L},\mathbf{K})^{[\Gamma']_{g,n+p}}$ is a hyperbolic surface with tight distinguished homotopy class of curves $\Gamma'$.
    This proves the claim about injectivity and the image.
\end{proof}

We may now consider the length-restricted version 
\begin{align*}
\hat{\mathcal{M}}^{\mathrm{tight}}_{g,n,p}(\mathbf{L},\mathbf{K})^{[\Gamma']_{g,n+p}}(\mathbf{x})\subset \mathcal{M}_{g,n+p}(\mathbf{L},\mathbf{K})^{\Gamma'}(\mathbf{x})
\end{align*}
for $\mathbf{x}\in\mathbb{R}_+^k$.
Suppose that $\Sigma_{g,n+p}\setminus \Gamma' = \bigsqcup_{i=1}^q \Sigma_{g_i,n_i+k_i+p_i}$, where $n_i$ and $k_i$ are as before and $p_i$ is the number of additional boundaries ending up in the $i$th component.
Accordingly, if $\mathbf{K}\in \mathbb{R}_+^{p}$, we denote by $\mathbf{K}^{(i)} \in \mathbb{R}_+^{p_i}$ the associated length vector of these additional boundaries.
The central new ingredient in the proof of Proposition~\ref{prop:integrationformula} is the following observation.

\begin{lemma}\label{lem:tightcomponents}
Let $X\in \mathcal{M}^{\mathrm{tight}}_{g,n,p}(\mathbf{L},\mathbf{K})^{[\Gamma']_{g,n+p}}(\mathbf{x})$ and denote the $q$-tuple of hyperbolic surface obtained after cutting $X$ along $\Gamma'$ by $X_1,\ldots,X_q$.
Then $\Gamma'$ is tight in $X$ if and only if for each $i=1,\ldots,k$, $X_i$ is a hyperbolic surface with $n_i+k_i$ tight boundaries.
\end{lemma}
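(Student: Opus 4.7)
The plan is to handle the two directions separately.

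For $(\Rightarrow)$, the key observation is that the inclusion $\Sigma_{g_i,n_i+k_i}\hookrightarrow\Sigma_{g,n}$ arising from cutting along an essential multicurve is $\pi_1$-injective, so for any boundary $\partial$ of $X_i$ among the first $n_i+k_i$ (which is either a tight boundary of $X$ or a component of $\Gamma'$), its extended class satisfies $[\partial]_{g_i,n_i+k_i}\subset[\partial]_{g,n}$. By hypothesis $\partial$ is the unique length minimizer on $X$ in the bigger class, and this passes immediately to unique minimization in the smaller class, giving tightness of $\partial$ on $X_i$.

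For $(\Leftarrow)$, I would apply Proposition~\ref{tighten_multicurve} to $\Gamma'$ on $X$ to obtain tightened geodesics $\gamma_l^*\in[\gamma_l']_{g,n}$ forming a simple, pairwise disjoint multicurve $\Gamma^*=(\gamma_1^*,\ldots,\gamma_k^*)$. The central claim is that $\gamma_l^*=\gamma_l'$ for every $l$; tightness of $\Gamma'$ on $X$ then follows at once, uniqueness being recovered by re-running the argument with any rival minimizer in place of $\gamma_l^*$. To argue the claim, assume $\gamma_l^*\neq\gamma_l'$; as two distinct simple closed curves in the same extended class, they cobound a genus-$0$ region $R\subset X$ containing at least one of the $p$ cusps. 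The hypothesis that the $\gamma_m'$ have pairwise distinct non-trivial extended classes in $\Sigma_{g,n}$ rules out any $\gamma_m'$ ($m\neq l$) from lying entirely in the interior of $R$, since such a curve would be forced to be homotopic in $\Sigma_{g,n}$ to $\gamma_l'$, to a cusp, or to a point. Once one also knows that $R$ is disjoint from the rest of $\Gamma'$, the region $R$ lies inside a single piece $X_a$; capping the $p_a$ cusps of $X_a$ inside $R$ turns it into an annulus, so $\gamma_l^*\in[\gamma_l']_{g_a,n_a+k_a}$ on $X_a$, and tightness of $\gamma_l'$ as a boundary of $X_a$ forces $\gamma_l^*=\gamma_l'$, the desired contradiction.

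The main obstacle is the remaining possibility that $\gamma_l^*$ transversally meets some $\gamma_m'$, causing arcs of $\gamma_m'$ to traverse the interior of $R$. I plan to handle this by selecting $l$, among all indices with $\gamma^*\neq\gamma'$, to minimize the number of cusps of $X$ in $R_l$. An innermost arc of $\gamma_m'$ inside $R_l$ together with an arc of $\gamma_l^*$ would bound a sub-disk $D\subset R_l$ containing strictly fewer cusps than $R_l$: the case of zero cusps in $D$ is excluded because simple closed geodesics on a hyperbolic surface realize the minimal intersection number in their free homotopy classes and hence admit no disk bigons, while a surgery exchange along a bigon with strictly fewer but nonzero cusps produces either a strictly shorter representative of $[\gamma_l']_{g,n}$ (contradicting minimality of $\gamma_l^*$) or a new offending curve whose region contains strictly fewer cusps than $R_l$ (contradicting the minimality of $l$), closing the argument.
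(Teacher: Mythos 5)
Your $(\Rightarrow)$ direction is sound and matches the paper: tightness passes from $X$ to each $X_i$ because the extended class of a boundary of $X_i$ sits inside the corresponding extended class in $\Sigma_{g,n}$.

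Your $(\Leftarrow)$ direction, however, has a genuine gap right at the start. You introduce the length-minimizing geodesics $\gamma_l^*$ and then assert that if $\gamma_l^*\neq\gamma_l'$ then ``as two distinct simple closed curves in the same extended class, they cobound a genus-$0$ region $R\subset X$.'' That presupposes $\gamma_l^*$ and $\gamma_l'$ are disjoint, which you never justify, and which is in general false. They are freely homotopic only in $\Sigma_{g,n}$, not in $\Sigma_{g,n+p}$, so their geodesic representatives on $X$ realize the (possibly positive) geometric intersection number of their $\Sigma_{g,n+p}$-classes; two curves that become homotopic only after capping cusps can perfectly well cross on $X$. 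Proposition~\ref{tighten_multicurve} gives you that the $\gamma_l^*$ are pairwise disjoint \emph{among themselves}, but says nothing about $\gamma_l^*$ versus $\gamma_l'$. The entire ``minimize the number of cusps in $R_l$'' / innermost-arc machinery is therefore built on an object $R_l$ that may not exist, and you only ever address the possibility of $\gamma_m'$ with $m\neq l$ crossing $R_l$, never $\gamma_l'$ crossing $\gamma_l^*$. A second, smaller gap: the final dichotomy (``the surgery produces either a strictly shorter representative or a new offending curve whose region has strictly fewer cusps'') is asserted without argument; the surgered curve need not be simple, need not be geodesic, and I do not see why one of the two alternatives must hold.

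The paper sidesteps all of this by running a bigon argument \emph{directly between the length minimizer and the multicurve $\Gamma'$}, rather than between two competing minimizers. Citing the topological fact that a simple closed curve freely homotopic to one disjoint from $\Gamma'$ must, if it meets $\Gamma'$ transversally, form an embedded innermost bigon with $\Gamma'$, one gets a bigon whose interior lies in a single piece $X_j$ (possibly with cusps) and whose $\Gamma'$-side lies on a \emph{tight boundary} of $X_j$; tightness then forces the $\gamma$-side to be strictly longer, contradicting that $\gamma$ was length-minimizing. This uses the tightness hypothesis on the pieces exactly once, in exactly the right place, and never requires the two competing representatives of $[\gamma_l']_{g,n}$ to be disjoint. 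If you wanted to repair your route, you would end up having to run essentially this bigon argument to handle the intersecting case, at which point the detour through $\gamma_l^*$ versus $\gamma_l'$ and the cusp-counting induction becomes superfluous.
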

\begin{proof}
    The ``only if'' part is straightforward.
    Indeed, if $\partial$ is one of the first $n_i$ boundaries of $X_i$ then the extended homotopy class $[\partial]_{\Sigma_{g_i,n_i+k_i}}$ is naturally contained in the extended homotopy class $[\partial]_{\Sigma_{g,n}}$ of the original surface, so the tightness of $\partial$ in $X_i$ follows from its tightness in $X$.
    Similarly, if $\partial$ is one of the $k_i$ boundaries corresponding to a side of the curve $\gamma_j'$, then $[\partial]_{\Sigma_{g_i,n_i+k_i}}$ is naturally contained in $[\gamma_j']_{\Sigma_{g,n}}$, so tightness of $\partial$ in $X_i$ follows from tightness of the curve $\gamma_j'$ in $X$.

    For the other direction, if we assume that $X_1,\ldots,X_q$ have tight boundaries, we need to demonstrate that a length-minimizing curve in one of the relevant homotopy classes of $\Sigma_{g,n}$ cannot intersect $\Gamma'$.
    For this we use the following topological fact (see e.g.\ \cite[\S 3]{hass1985}): if $\gamma$ is a simple closed curve in $\Sigma_{g,n}$ that intersects a multi-curve $\Gamma'$ transversally and is freely homotopic to a curve that is disjoint from $\Gamma'$, then $\gamma$ and $\Gamma'$ form at least one bigon, i.e.\ a disk embedded in $\Sigma_{g,n}$ whose boundary consists of a segment $a$ of $\gamma$ and a segment $b$ of $\Gamma'$ that meet at both endpoints and whose interior is disjoint from $\gamma$ and $\Gamma'$.
    Let now $\gamma$ be a length-minimizing geodesic in $[\partial]_{\Sigma_{g,n}}$ for one of the $n$ boundaries $\partial$ or $[\gamma_i]_{\Sigma_{g,n}}$ for one of the curves of $\Gamma'$.
    By the argument in the proof of Lemma~\ref{lem:distinguishedtight} we know that $\gamma$ is simple and, by definition, is freely homotopic in $\Sigma_{g,n}$ to a curve disjoint from $\Gamma'$.
    So if $\gamma$ intersects $\Gamma'$, a bigon must exist.
    This bigon is contained in one of the surfaces $X_j$ (after capping the last $p_j$ boundaries) with the segment $b$ on one of the tight boundaries $\partial$ and $a$ starting and ending on the boundary. 
    By the tightness of $\partial$, $a$ is strictly longer than $b$.
    Hence, there would be a shorter curve in $X$ obtained from $\gamma$ by replacing the segment $a$ by $b$, living in the same extended free homotopy class as $\gamma$.
    Since this is in contradiction with $\gamma$ being length-minimizing, $\gamma$ must be disjoint from $\Gamma'$.
\end{proof}

A direct consequence at the level of moduli spaces is the following.

\begin{lemma}\label{lem:tightmodulifactorization}
    The subset $\hat{\mathcal{M}}^{\mathrm{tight}}_{g,n,p}(\mathbf{L},\mathbf{K})^{[\Gamma']_{g,n+p}}(\mathbf{x})\subset \mathcal{M}_{g,n+p}(\mathbf{L},\mathbf{K})^{\Gamma'}(\mathbf{x})$ is invariant under the torus action (twisting along the curves $\gamma_1',\ldots,\gamma_k'$) and the image under the symplectomorphism \eqref{eq:cuttingsymplectomorphism} of its quotient space $\hat{\mathcal{M}}^{\mathrm{tight}}_{g,n,p}(\mathbf{L},\mathbf{K})^{[\Gamma']_{g,n+p}*}(\mathbf{x})$ is 
    \begin{align*}
        \prod_{i=1}^q \mathcal{M}^{\mathrm{tight}}_{g,n_i+k_i,p_i}(\mathbf{L}^{(i)},\mathbf{x}^{(i)},\mathbf{K}^{(i)}).
    \end{align*}
\end{lemma}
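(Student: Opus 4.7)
The plan is to derive both assertions directly from Lemma~\ref{lem:tightcomponents}, which reduces tightness of the multi-curve $\Gamma'$ and of the first $n$ boundaries of $X$ to tightness of the first $n_i+k_i$ boundaries of each cut component $X_1,\ldots,X_q$. With that characterisation in hand, the remaining work is essentially bookkeeping.

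First I would establish torus invariance. The torus action on $\mathcal{M}_{g,n+p}(\mathbf{L},\mathbf{K})^{\Gamma'}(\mathbf{x})$ is by Fenchel--Nielsen twists along the distinguished curves $\gamma_1',\ldots,\gamma_k'$. Such a twist only alters the gluing pattern along $\Gamma'$; the hyperbolic structures on the components $X_1,\ldots,X_q$ obtained by cutting $X$ along $\Gamma'$, together with the labelling of their boundaries, are unchanged. Since Lemma~\ref{lem:tightcomponents} characterises membership in $\hat{\mathcal{M}}^{\mathrm{tight}}_{g,n,p}(\mathbf{L},\mathbf{K})^{[\Gamma']_{g,n+p}}(\mathbf{x})$ purely through a tightness condition on these unchanged components, the subset is stable under the torus action, which proves the first claim.

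Second, I would combine this with the symplectomorphism \eqref{eq:cuttingsymplectomorphism}. Taking the torus quotient, the cutting map restricts to a bijection from $\hat{\mathcal{M}}^{\mathrm{tight}}_{g,n,p}(\mathbf{L},\mathbf{K})^{[\Gamma']_{g,n+p}*}(\mathbf{x})$ onto its image in $\prod_{i=1}^q \mathcal{M}_{g_i,n_i+k_i+p_i}(\mathbf{L}^{(i)},\mathbf{x}^{(i)},\mathbf{K}^{(i)})$. By Lemma~\ref{lem:tightcomponents}, a tuple $(X_1,\ldots,X_q)$ lies in this image precisely when each $X_i$ has its first $n_i+k_i$ boundaries (the $n_i$ coming from $\partial \Sigma_{g,n}$ and the $k_i$ coming from sides of $\Gamma'$) tight, i.e.\ when $X_i\in \mathcal{M}^{\mathrm{tight}}_{g_i,n_i+k_i,p_i}(\mathbf{L}^{(i)},\mathbf{x}^{(i)},\mathbf{K}^{(i)})$ for every $i$. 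Identifying the image therefore produces exactly the product $\prod_{i=1}^q \mathcal{M}^{\mathrm{tight}}_{g_i,n_i+k_i,p_i}(\mathbf{L}^{(i)},\mathbf{x}^{(i)},\mathbf{K}^{(i)})$ asserted in the lemma.

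The expected obstacle is limited since the geometric heart of the statement is already packaged inside Lemma~\ref{lem:tightcomponents}. The one mildly delicate point worth checking carefully is that the bookkeeping of boundary labels is consistent with the tightness conventions: when $X$ is cut along $\Gamma'$, each component inherits $n_i$ boundaries from $\partial\Sigma_{g,n}$, $k_i$ boundaries from cut sides (the ones of length $\mathbf{x}^{(i)}$), and $p_i$ capped boundaries (of length $\mathbf{K}^{(i)}$); and it is precisely the first two families that must be tight, matching the definition of $\mathcal{M}^{\mathrm{tight}}_{g_i,n_i+k_i,p_i}$. Once this correspondence is verified, no further analytic or geometric input is required.
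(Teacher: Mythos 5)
Your argument matches the paper's proof: both deduce torus invariance from the fact that twisting leaves the cut components unchanged and Lemma~\ref{lem:tightcomponents} characterises membership through those components, and both identify the image under \eqref{eq:cuttingsymplectomorphism} as a second direct application of Lemma~\ref{lem:tightcomponents}. The proposal is correct and takes essentially the same route, merely spelling out the bookkeeping of boundary labels more explicitly than the paper does.
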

\begin{proof}
    Since the $q$-tuple of hyperbolic surfaces obtained after cutting is invariant under twisting along $\Gamma'$, Lemma~\ref{lem:tightcomponents} implies that tightness of $\Gamma'$ is invariant under twisting as well. 
    The identification of the image under the symplectomorphism follows directly from Lemma~\ref{lem:tightcomponents} as well.
\end{proof}

This brings us to the proof of our proposition.
\begin{proof}[Proof of Proposition~\ref{prop:integrationformula}]
We follow the reasoning of \cite[Sec.~8]{Mirzakhani_polynomial_volumes}, which relates
\begin{align*}
    \int_{\mathcal{M}_{g,n,p}^{\mathrm{tight}}(\mathbf{L},\mathbf{K})} F^\Gamma_{\mathrm{tight}}(X) \mathrm{d}\mu_{g,n+p}^{\mathrm{WP}}(X) &= \int_{\mathcal{M}_{g,n,p}^{\mathrm{tight}}(\mathbf{L},\mathbf{K})^{[\Gamma]_{g,n}}} f(\vec{\ell}_X^{\;\mathrm{tight}}(\Gamma)) \mathrm{d}\mu_{g,n+p}^{\mathrm{WP}}(X).
\end{align*}
Lemma~\ref{lem:distinguishedtight} implies that this equals
\begin{align*}
    \sum_{\Gamma'\in \mathcal{C}^{\Gamma}_{g,n+p}/ \lambda_{g,n,p}^{-1}(\mathrm{Stab}_{g,n}(\Gamma))} \int_{\hat{\mathcal{M}}^{\mathrm{tight}}_{g,n,p}(\mathbf{L},\mathbf{K})^{[\Gamma']_{g,n+p}}} f(\vec{\ell}_X(\Gamma')) \mathrm{d}\mu_{g,n+p}^{\mathrm{WP}}(X),
\end{align*}
while Lemma~\ref{lem:tightmodulifactorization} in turn allows us to factorize the integral into
\begin{align*}
    \sum_{\Gamma'\in \mathcal{C}^{\Gamma}_{g,n+p}/ \lambda_{g,n,p}^{-1}(\mathrm{Stab}_{g,n}(\Gamma))} 
    \int_{\mathbb{R}_+^k} x_1\cdots x_k f(\mathbf{x}) \prod_{i=1}^q \int_{\mathcal{M}^{\mathrm{tight}}_{g,n_i+k_i,p_i}(\mathbf{L}^{(i)},\mathbf{x}^{(i)},\mathbf{K}^{(i)})}
    \mathrm{d}\mu_{g_i,n_i+k_i+p_i}^{\mathrm{WP}}(X_i).
\end{align*}
The inner integrals are nothing but the tight Weil-Petersson volumes $T_{g_i,n_i+k_i,p_i}(\mathbf{L}^{(i)},\mathbf{x}^{(i)},\mathbf{K}^{(i)})$.
Observing that there is precisely one element of $\mathcal{C}^{\Gamma}_{g,n+p}/ \lambda_{g,n,p}^{-1}(\mathrm{Stab}_{g,n}(\Gamma))$ for each partition of the $I_1 \sqcup \cdots \sqcup I_q = \{1,\ldots,p\}$ of the last $p$ boundaries over the $q$ components of $\Sigma_{n,p} \setminus \Gamma$, the claimed formula follows.
\end{proof}

The integration formula takes on a simpler form once expressed in terms of generating functions and restricting to the case where the last $p$ boundaries are cusps,
    \begin{align}\label{integration_formula_version_generating_function}
        \sum_{p=0}^\infty \frac{\mu^p}{p!} \int_{\mathcal{M}_{g,n,p}^{\mathrm{tight}}(\mathbf{L},\mathbf{0}^{p})} F^\Gamma_{\mathrm{tight}}(X) \mathrm{d}\mu_{g,n+p}^{\mathrm{WP}}(X) = C_{\Gamma}\int_{\mathbb{R}_+^k} x_1\cdots x_k f(\mathbf{x}) \prod_{i=1}^q T_{g_i,n_i}(\mathbf{L}^{(i)},\mu) \mathrm{d}x_1\cdots \mathrm{d}x_k.
    \end{align}

\section{Estimates of tight volumes and intersection numbers} \label{estimates_tight_volumes}

This section is dedicated to proving all the estimates we need to prove Theorem~\ref{main_theorem}. In the first subsection~\ref{estimates_intersection_numbers} we will provide new inequalities on intersection numbers, while in subsection~\ref{estimates_of_polynomial} we give estimates of the polynomial 
\begin{align*}
\mathcal{P}_{g,n}\bigg(\displaystyle \mathbf{L},\frac{M_1(\mu)}{M_0(\mu)},\cdots,\frac{M_{3g-3+n}(\mu)}{M_0(\mu)}\bigg).
\end{align*}
The following notation will be practical in this section. For $\boldsymbol{x} = (x_1,\cdots,x_r) \in \mathbb{N}^{r}$, we denote $|\boldsymbol{x}| = \somme{i=1}{r}{x_i}$. For $I \subset \{1,\cdots,r\}$, we define $\boldsymbol{x}_I = (x_i)_{i \in I}$. We also use the notation $\boldsymbol{x}_{\ge i} = (x_j)_{i \le j \le r}$ and $\boldsymbol{x}_{\le i} = (x_j)_{0 \le j \le i}$.

\subsection{Estimates of intersection numbers}\label{estimates_intersection_numbers}
First we mention the result \cite[Corollary 5.11]{Intersection_numbers}  which gives for any $p_1,\dots,p_k \ge 2$:
\begin{align}\label{intersection_numbers}
    \langle \tau_{p_1}\dots\tau_{p_k}\tau_{2}^{3g-3+k - |\mathbf{p}|}\rangle_{g} \underset{g \to +\infty}{\sim }\frac{15^k g^{2k-|\mathbf{p}|}}{\prod_{i=1}^{k}(2p_i +1)!!}\bigg(\frac{25}{24}\bigg)^{g}\frac{2^{g-1}\sqrt{3/5}(3g-3)!((g-1)!)^2}{\pi^2 (5g-5)(5g-3)}.
\end{align}
The next proposition gives a uniform comparison between intersection numbers.
\begin{proposition}\label{uniform_bound_on_intersection_numbers}
For any $g \ge 2$ and $p_1,\cdots,p_k,q_1,\cdots,q_r \ge 1$ such that $|\mathbf{p}|+|\mathbf{q}|  \le 3g-3$, we have 
\begin{align}\label{simple_bound_intersection_numbers}
    \frac{\langle \tau_{q_1+1}\dots \tau_{q_r+1}\tau_{p_1+1}\dots\tau_{p_k+1}\tau_{2}^{3g-3 - |\mathbf{p}|-|\mathbf{q}|}\rangle_{g}}{(3g-3-|\mathbf{p}|-|\mathbf{q}|)!} &\le  \frac{\langle\tau_{p_1}\dots\tau_{p_k}\tau_{2}^{3g-3 - |\mathbf{p}|}\rangle_{g}}{(3g-3-|\mathbf{p}|)!}\cdot \frac{ 3^{|\mathbf{q}|}(15g)^r}{\displaystyle \prod_{i=1}^{r}(2q_i+3)!!}.
\end{align}
\end{proposition}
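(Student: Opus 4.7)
The plan is to prove this uniform comparison inductively on $r$ (the number of additional $\tau_{q_i+1}$ insertions), leveraging the Virasoro/DVV recursion for $\psi$-class intersection numbers together with the Aggarwal-type explicit estimates that underlie \eqref{intersection_numbers}. Heuristically, if one plugs the asymptotic formula \eqref{intersection_numbers} into both sides of the claimed inequality, the $g$-dependence on the right-hand side matches the ratio of the two intersection numbers up to bounded factors; the work is to promote this asymptotic match into a uniform inequality valid for every $g\geq 2$, with the explicit constants $15$ and $3$.

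For the base case $r=0$, the inequality reduces to a monotonicity statement comparing $\langle \tau_{p_1+1}\cdots \tau_{p_k+1}\tau_2^{3g-3-|\mathbf{p}|}\rangle_g$ to $\langle\tau_{p_1}\cdots\tau_{p_k}\tau_2^{3g-3-|\mathbf{p}|}\rangle_g$ (after the same factorial normalization $(3g-3-|\mathbf{p}|)!$). I would establish this by applying the DVV recursion to one of the indices $\tau_{p_i+1}$ and checking that the resulting sum is dominated term-by-term by the unshifted intersection number, using the non-asymptotic bounds from \cite{Intersection_numbers}. The upshot is that raising any single index $p_i\to p_i+1$ weakly decreases the normalized intersection number.

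For the inductive step, I pick one of the indices, say $q_r$, and aim to peel it off. Applying the Virasoro/DVV recursion to $\tau_{q_r+1}$ produces a linear combination of intersection numbers with $\tau_{q_r+1}$ replaced by pairings of $\tau_{q_r}$ with the other marked points together with splittings into stable components. The dominant contribution comes from the pairings with the many $\tau_2$ factors, yielding a factor of order $15g/(2q_r+3)$ that matches the leading constant in \eqref{intersection_numbers}; the remaining subleading contributions I would control uniformly by a factor $3$. Iterating the DVV step $q_r$ times (down to $\tau_1$, where the dilaton equation applies) accumulates the factor $\frac{3^{q_r}(15g)}{(2q_r+3)!!}$. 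Multiplying across all $r$ indices and combining with the inductive hypothesis gives exactly the right-hand side factor $\frac{3^{|\mathbf{q}|}(15g)^r}{\prod(2q_i+3)!!}$, while the change in the factorial normalization $(3g-3-|\mathbf{p}|-|\mathbf{q}|)!$ tracks the decrease in $\tau_2$-insertions at each step.

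The main obstacle will be bounding the subleading DVV contributions uniformly in $g$, $\mathbf{p}$ and $\mathbf{q}$: these include splittings of the surface into two components, which produce intersection numbers of lower genus that must be controlled against $\tau_2^{m}$-type factors on the other component. Making the constant $3$ absorb all such terms for every $g\geq 2$ is the delicate combinatorial point, and relies on the polynomial structure and uniform two-sided bounds on intersection numbers established in \cite{Intersection_numbers}. Once this uniform bookkeeping is in place, the induction closes and yields the claimed estimate.
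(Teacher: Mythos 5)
There is a genuine gap here: you identify the right tool (the Virasoro constraint together with induction on $r$ and on the $q_i$), but you apply it in the wrong direction, and this leaves you with exactly the difficulty the paper's proof is designed to avoid. You propose to ``peel off'' $\tau_{q_r+1}$ by applying Virasoro to the left-hand side and then bounding the full linear combination from above, controlling all subleading contributions (splittings, pairings with the $\tau_{p_i+1}$, etc.) uniformly in $g,\mathbf{p},\mathbf{q}$. You acknowledge this bookkeeping is delicate and would lean on external bounds from \cite{Intersection_numbers}; in fact, it is not clear that it closes at all. Moreover, the Virasoro relation in the form quoted expresses $\langle \tau_{k+1}\cdots\rangle$ in terms of $\langle \tau_{k+k_j}\cdots\rangle$ with $k_j\geq 2$, so iterating it pushes the index \emph{up}, not ``down to $\tau_1$'' as you describe --- that part of your inductive plan is not consistent with the recursion.

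The paper's proof uses Virasoro only to produce a \emph{lower} bound, which is a one-line consequence of positivity of intersection numbers: applying the relation to $\langle \tau_{q_1+1}\tau_{p_1+1}\cdots\tau_{p_k+1}\tau_2^{3g-3-|\mathbf p|-q_1}\rangle_g$ and retaining only the $(3g-3-|\mathbf p|-q_1)$ pairing terms with $\tau_2$ gives
\begin{align*}
\langle \tau_{q_1+1}\cdots\rangle_g \;\ge\; \frac{(2q_1+5)!!}{3(2q_1+3)!!}\,(3g-3-|\mathbf p|-q_1)\,\langle \tau_{q_1+2}\cdots\rangle_g,
\end{align*}
with no need to estimate the discarded terms. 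Rearranging yields an upper bound on the normalized quantity for $q_1+1$ in terms of that for $q_1$, with the increment factor $3/(2q_1+5)$, and the induction on $q_1$ (base case trivial) then gives the exact constant $3^{q_1}(15g)/(2q_1+3)!!$; the induction on $r$ is a routine iteration. This is entirely self-contained and uses nothing from \cite{Intersection_numbers}. Finally, note that the ``monotonicity statement'' you flag as your base case $r=0$ is vacuous --- with $\tau_{p_i}$ on the right-hand side the degree count forces that intersection number to vanish unless $k=0$, so that instance of $\tau_{p_i}$ in the displayed statement is a typo for $\tau_{p_i+1}$, and the base case is an equality.
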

\begin{proof}
    Fix $g \ge 2$ and $p_1,\cdots,p_k \ge 1$ such that $|\mathbf{p}| \le 3g-3$. We only prove the result for $r=1$. An induction on $r$ gives the general result. Let us prove the result by induction on $q_1$. The case $q_1 = 2$ is obvious. Let us suppose that the result holds for $q_1 \ge 1$. We write the formulation of the Virasoro constraints introduced originally in equation $(7.13)$ of \cite{VERLINDE1991457}: 
\begin{align*}
    \bigg \langle \tau_{k+1}\prod_{i=1}^{n}\tau_{k_i} \bigg \rangle = &\frac{1}{(2k+3)!!}\bigg(\somme{j=1}{n}{\frac{(2k+2k_j+1)!!}{(2k_j-1)!!}\bigg \langle \tau_{k+k_j}\prod_{\substack{i=1\\i\neq j}}^{n}\tau_{k_i} \bigg \rangle}\\
    &+\frac{1}{2}\somme{\substack{r+s=k-1\\r,s \ge 0}}{}{(2r+1)!!(2s+1)!!\bigg \langle \tau_{r}\tau_{s}\prod_{i=1}^{n}\tau_{k_i} \bigg \rangle}\\&+\displaystyle \frac{1}{2}\somme{\substack{r+s=k-1\\r,s \ge 0}}{}{(2r+1)!!(2s+1)!!\prod_{\substack{I\cup J = \{1,\dots,n\}\\I\cap J= \emptyset}}\bigg \langle \tau_{r}\prod_{i\in I}^{}\tau_{k_i} \bigg \rangle\bigg \langle \tau_{s}\prod_{i\in J}^{}\tau_{k_i} \bigg \rangle}\bigg).
\end{align*}
Applying this equation to $\langle \tau_{q_1+1}\tau_{p_1+1}\dots\tau_{p_k+1}\tau_{2}^{3g-3 - |\mathbf{p}|-q_1}\rangle_{g}$, we can give a lower bound for this term by only keeping the terms for which $k_j = 2$ in the first sum of the right-hand side of the equation. We deduce
\begin{align}\label{to_use}
    &\langle \tau_{q_1+1}\tau_{p_1+1}\dots\tau_{p_k+1}\tau_{2}^{3g-3 - |\mathbf{p}|-q_1}\rangle_{g} \\
    &\qquad\qquad\ge \frac{(2q_1+5)!!}{3(2q_1+3)!!}(3g-3-|\mathbf{p}|-q_1)\langle \tau_{q_1+2}\tau_{p_1+1}\dots\tau_{p_k+1}\tau_{2}^{3g-3 - |\mathbf{p}|-(q_1+1)}\rangle_{g}\nonumber
\end{align}
This last equation and the induction gives
\begin{align*}
    \langle \tau_{q_1+2}\tau_{p_1+1}\dots\tau_{p_k+1}\tau_{2}^{3g-3 - |\mathbf{p}|-(q_1+1)}\rangle_{g} &\le \frac{(2q_1+3)!!}{(2q_1+5)!!}\frac{3}{(3g-3-|\mathbf{p}|-q_1)}\cdot\frac{3^{q_1}(15g)}{(2q_1+3)!!}\\& \hspace{0.5cm}\frac{(3g-3-|\mathbf{p}|-q_1)!}{(3g-3-|\mathbf{p}|)!}\langle \tau_{p_1+1}\dots\tau_{p_k+1}\tau_{2}^{3g-3 - |\mathbf{p}|}\rangle_{g} \\
    &\mkern-150mu\le\frac{3^{q_1+1}(15g)}{(2q_1+5)!!} \frac{(3g-3-|\mathbf{p}|-(q_1+1))!}{(3g-3-|\mathbf{p}|)!}\langle \tau_{p_1+1}\dots\tau_{p_k+1}\tau_{2}^{3g-3 - |\mathbf{p}|}\rangle_{g}.
\end{align*}
Dividing the last inequality by $(3g-3-|\mathbf{p}|-(q_1+1))!$, this concludes the proof.
\end{proof}
\subsection{Estimates of $T_{g,n}(\mathbf{L},\mu)$}\label{estimates_of_polynomial}
Let us start by stating the main result of this subsection. This allows us to do computations using Proposition~\ref{prop:integrationformula} and using estimates for $T_{g,n}(\mu)$ (see Proposition~\ref{Estimate_derivative_Pg0}).
\begin{proposition}\label{estimate_boundary}
For any $\mu_g \underset{g \to +\infty}{\rightarrow}\mu_c$ such that $\mu_c - \mu_g = o(g^{-2})$ and any $n \ge 0$, we have
    \begin{align*}
        T_{g,n}\bigg(\bigg(-\frac{M_1}{3M_0}\bigg)^{\frac{1}{2}}\mathbf{L},\mu_g\bigg) = \bigg(\prod_{i=1}^{n}\frac{\sinh(L_i)}{L_i}\bigg)T_{g,n}(\mu_g)\,\bigg(1+o_{n,\mathbf{L}}(1)\bigg),
    \end{align*}
   where $o_{n,\mathbf{L}}(1)$ is uniform in $\mathbf{L}\in K$ if $K \subset \mathbb{R}_{+}^{n}$ is compact.\\
   Moreover, for $K \subset \mathbb{R}^n$ compact, there exists $g_0 \ge 0$ such that for $g$ large enough
   \begin{align*}
      \forall \mathbf{L} \in K, \text{ } \forall g^{'}
\in \{g_0,\cdots,g\}, \text{ } T_{g^{'},n}\bigg(\bigg(-\frac{M_1}{3M_0}\bigg)^{\frac{1}{2}}\mathbf{L},\mu_g\bigg) \le 2e^{\somme{i=1}{n}{L_i}}T_{g^{'},n}(\mu_g), 
\end{align*}
where the constant $g_0$ depends on the choice of $(\mu_g)$, $n$ and $K$.
\end{proposition}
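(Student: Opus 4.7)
The plan is to use Theorem~\ref{polynomial_recursion} and the homogeneity of $\mathcal{P}_{g,n}$ to reduce the problem to an asymptotic analysis of $\psi$-class intersection numbers. Set $\sigma_g \coloneqq -M_1(\mu_g)/(3M_0(\mu_g))$. The homogeneity relation, namely $\mathcal{P}_{g,n}(\sqrt{\sigma}\mathbf{L}, \sigma m_1, \ldots, \sigma^{3g-3+n}m_{3g-3+n}) = \sigma^{3g-3+n}\mathcal{P}_{g,n}(\mathbf{L}, \mathbf{m})$, combined with Theorem~\ref{polynomial_recursion} yields
\begin{align*}
\frac{T_{g,n}(\sigma_g^{1/2}\mathbf{L}, \mu_g)}{T_{g,n}(\mu_g)} = \frac{\mathcal{P}_{g,n}(\mathbf{L}, -3, \tilde{m}_2, \ldots, \tilde{m}_{3g-3+n})}{\mathcal{P}_{g,n}(\mathbf{0}, -3, \tilde{m}_2, \ldots, \tilde{m}_{3g-3+n})},
\end{align*}
where $\tilde{m}_k \coloneqq M_k(\mu_g)/(M_0(\mu_g)\sigma_g^k)$. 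Since $Z(r,\mu_c)$ has a double zero in $r$ at $R(\mu_c)$, implicit differentiation gives $M_0(\mu) \sim C(\mu_c - \mu)^{1/2}$ as $\mu \to \mu_c$, so $\sigma_g$ diverges like $(\mu_c - \mu_g)^{-1/2}$ and $\tilde{m}_k = O((\mu_c-\mu_g)^{(k-1)/2})$ for $k \ge 2$. Under $\mu_c - \mu_g = o(g^{-2})$, this is $o(g^{-(k-1)})$.

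The leading behaviour is captured by $\mathcal{P}_{g,n}(\mathbf{L}, -3, 0, \ldots, 0)$. Iterating the recursion~\eqref{recursion_relation} from the base cases~\eqref{polynomial_base_case} and~\eqref{Pg0_expression} and evaluating at $m_1 = -3$, $m_{\ge 2} = 0$, one writes this polynomial as a nonnegative linear combination over $\alpha \in \mathbb{N}^n$ of monomials $\prod_i L_i^{2\alpha_i}$, with coefficients built from $\psi$-class intersection numbers of the form $\langle\tau_{\alpha_1+1}\cdots\tau_{\alpha_n+1}\tau_2^{3g-3-|\alpha|}\rangle_g$ (after reducing $\tau_1$-entries via the string equation) and appropriate powers of $3$. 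The intersection number asymptotic~\eqref{intersection_numbers}, together with the identity $5!! = 15$, then implies that for each fixed $\alpha$, the ratio of the $\alpha$-coefficient to the $\alpha = \mathbf{0}$ coefficient converges to $\prod_i 1/(2\alpha_i+1)!$ as $g \to \infty$. Summing over $\alpha$ recognises the Taylor expansion $\sinh(L)/L = \sum_{\alpha \ge 0} L^{2\alpha}/(2\alpha+1)!$, and Proposition~\ref{uniform_bound_on_intersection_numbers} provides the $g$-uniform domination required to exchange limit and sum, yielding
\begin{align*}
\frac{\mathcal{P}_{g,n}(\mathbf{L}, -3, 0, \ldots)}{\mathcal{P}_{g,n}(\mathbf{0}, -3, 0, \ldots)} \xrightarrow[g\to\infty]{} \prod_{i=1}^n \frac{\sinh L_i}{L_i}
\end{align*}
uniformly on compact $\mathbf{L}$-sets.

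For the remaining dependence on the small $\tilde{m}_k$'s, one Taylor-expands $\mathcal{P}_{g,n}(\mathbf{L}, -3, \tilde{m}_2, \ldots)$ as a polynomial in $(\tilde{m}_2, \tilde{m}_3, \ldots)$ around zero. By Proposition~\ref{uniform_bound_on_intersection_numbers}, the coefficient of $\prod_{k\ge 2}\tilde{m}_k^{a_k}$ is dominated, relative to $\mathcal{P}_{g,n}(\mathbf{L}, -3, 0, \ldots)$, by a constant multiple of $\prod_k (15g)^{a_k}/((2k+1)!!^{a_k} a_k!)$. Combined with $\tilde{m}_k = o(g^{-(k-1)})$, every monomial with $\sum_k (k-1)a_k \ge 1$ contributes $o(1)$ and the total error is $o(1)$, giving the first claim. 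The same estimates apply uniformly to every $g' \in \{g_0, \ldots, g\}$ once $g_0$ is chosen large enough, yielding
\begin{align*}
\frac{T_{g',n}(\sigma_g^{1/2}\mathbf{L}, \mu_g)}{T_{g',n}(\mu_g)} \le 2 \prod_{i=1}^n \frac{\sinh L_i}{L_i} \le 2\, e^{\sum_i L_i}
\end{align*}
via $\sinh(L)/L \le e^L$. The main technical obstacle is the exchange of limit and summation: the sum defining $\mathcal{P}_{g,n}(\mathbf{L}, -3, 0, \ldots)$ has on the order of $g^n$ terms with $g$-dependent coefficients, so the pointwise-in-$\alpha$ asymptotic~\eqref{intersection_numbers} alone does not suffice, and the full force of Proposition~\ref{uniform_bound_on_intersection_numbers} is needed to dominate the tail $\alpha$'s whose sizes are comparable to $g$.
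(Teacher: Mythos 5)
Your overall strategy — rescale via the homogeneity in Theorem~\ref{polynomial_recursion} to reduce to a statement about the polynomial $\mathcal{P}_{g,n}$ with $m_1$ pinned at $-3$, then read off the $\sinh(L)/L$ factors from asymptotics of $\psi$-class intersection numbers — is the same as the paper's, but your write-up glosses over exactly the parts that take the paper a full subsection of work (Propositions~\ref{uniform_bound_derivative_and_A}, \ref{Estimate_derivative_Pg0} and \ref{asymptotics_of_coefficients}). Your central assertion, that iterating the recursion~\eqref{recursion_relation} at $m_1=-3$, $m_{\geq 2}=0$ writes $\mathcal{P}_{g,n}(\mathbf{L},-3,0,\ldots)$ as a nonnegative linear combination with coefficients ``built from $\langle\tau_{\alpha_1+1}\cdots\tau_{\alpha_n+1}\tau_2^{3g-3-|\alpha|}\rangle_g$,'' is neither proven nor correctly indexed: already at $n=1$ a direct iteration gives the coefficient of $L_1^{2q}$ (for $q\geq 2$) proportional to $\langle\tau_q\,\tau_2^{3g-2-q}\rangle_g / ((2q)!!\,(3g-2-q)!)$, i.e.\ a different family of intersection numbers than the one you state (different subscript and different number of marked points). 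For $n\geq 2$ the integral term in \eqref{recursion_relation} produces genuine cross-contributions and the claim that the coefficient is a single tidy product becomes false; it is only the \emph{asymptotic} relation $\alpha_{n,g,\emptyset,\mathbf{q}}\sim\alpha_{n,g}\prod_i 1/(2q_i+1)!$ that is correct, and obtaining it together with the uniform bound $|\alpha_{n,g,\emptyset,\mathbf{q}}|\leq C_n|\alpha_{n,g}|\prod_i 1/q_i!$ (needed for the dominated convergence you invoke) is precisely the content of the paper's inductive argument, which you do not reproduce. Proposition~\ref{uniform_bound_on_intersection_numbers} alone does not give you domination of the \emph{polynomial coefficients}; it has to be propagated through the recursion, which is what Proposition~\ref{uniform_bound_derivative_and_A} does.

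Second, the way you dispose of the $\tilde{m}_k$ for $k\geq 2$ is heuristically right but incomplete: you assert domination of each Taylor monomial by $\prod_k (15g)^{a_k}/((2k+1)!!^{a_k}a_k!)$ ``relative to $\mathcal{P}_{g,n}(\mathbf{L},-3,0,\ldots)$,'' but that comparison is not a direct corollary of Proposition~\ref{uniform_bound_on_intersection_numbers} (which compares intersection numbers, not the coefficients of a polynomial built recursively from them); the paper's Proposition~\ref{split_sum} does this comparison and shows explicitly that the error factor $\exp(\sum_k (a^2 g M_0)^{k-2})-1$ goes to $0$ only under the hypothesis $\mu_c-\mu_g = o(g^{-2})$. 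Your proof signals the right hypothesis but does not exhibit where it enters.

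Third, your treatment of the second claim (uniformity over $g'\in\{g_0,\ldots,g\}$) is a one-line assertion, but this is not immediate: the first claim gives $o(1)$ as $g\to\infty$ with $\mu=\mu_g$ tied to $g$, whereas here you need a bound that holds for a \emph{smaller} $g'$ while $\mu$ is still tuned to the larger $g$. The paper handles this by a compactness/contradiction argument (constructing a sequence along which the bound would fail and showing it contradicts the first part). Your assertion ``the same estimates apply uniformly'' skips this; as written there is a gap. None of these issues indicate a wrong strategy — your route is the paper's route — but the proof as submitted is missing the inductive apparatus and the uniformity argument, which are the actual mathematical content of this proposition.
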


\noindent First we establish asymptotics for the quantities $M_0(\mu),M_1(\mu), R(\mu)$ as $\mu \to \mu_c$ and determine the exact value of $\mu_c$. 
\begin{proposition}\label{asymptotics_M1_M0_R}
    We have the following asymptotics
    \begin{align*}
       & M_{0}(\mu) \underset{\mu \to \mu_c}{\sim} \bigg(\frac{8\pi^2 J_1(j_0)}{j_0}(\mu_c -\mu)\bigg)^{\frac{1}{2}},\\
       & M_{1}(\mu_c) =-\frac{4\pi^2 J_1(j_0)}{j_0},\\
       & R(\mu_c) =\frac{j_0^2}{8\pi^2},\\
       & \mu_c = \frac{j_0J_1(j_0)}{4\pi^2},
    \end{align*}
    where $j_0$ is the first zero of the Bessel function of the first kind $J_0$.
\end{proposition}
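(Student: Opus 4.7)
The plan is to extract the three Bessel-function identities directly from the explicit formula for $M_k(\mu)$ in \eqref{moments} and then use a second-order implicit Taylor expansion of $Z(R(\mu),\mu)=0$ around $\mu=\mu_c$ to obtain the square-root behaviour of $M_0(\mu)$.

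First I would compute $R(\mu_c)$ and $\mu_c$ together. From \eqref{moments} one reads $M_0(\mu)=J_0(2\pi\sqrt{2R(\mu)})$, so the equation $M_0(\mu_c)=0$ combined with the definition $\mu_c=\inf\{\mu\geq 0:M_0(\mu)=0\}$ and the fact that $R(\cdot)$ is continuous with $R(0)=0$ forces $2\pi\sqrt{2R(\mu_c)}$ to be the \emph{first} positive zero of $J_0$, namely $j_0$. This gives $R(\mu_c)=j_0^2/(8\pi^2)$. Plugging this value into the defining relation $Z(R(\mu_c),\mu_c)=0$ using \eqref{def_Z} yields $\mu_c=\frac{\sqrt{R(\mu_c)}}{\sqrt{2}\pi}J_1(j_0)=\frac{j_0 J_1(j_0)}{4\pi^2}$. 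The value of $M_1(\mu_c)$ is then obtained by plugging $R(\mu_c)=j_0^2/(8\pi^2)$ into the Bessel formula $M_1(\mu)=-\frac{\sqrt{2}\pi}{\sqrt{R(\mu)}}J_1(2\pi\sqrt{2R(\mu)})$, giving $M_1(\mu_c)=-\frac{4\pi^2 J_1(j_0)}{j_0}$.

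Next, for the asymptotic of $M_0(\mu)$ as $\mu\to\mu_c^-$, I set $\delta=\mu_c-\mu$ and $\varepsilon=R(\mu_c)-R(\mu)$; both tend to $0^+$ since $R$ is increasing (this monotonicity is clear from $\partial_r Z>0$ on $[0,R(\mu_c))$). Because $Z$ is smooth in $r$, Taylor expansion of $Z(R(\mu_c)-\varepsilon,\mu_c-\delta)=0$ gives
\begin{align*}
0 = Z(R(\mu_c),\mu_c) - \varepsilon\,M_0(\mu_c) + \tfrac{\varepsilon^{2}}{2}M_1(\mu_c) + \delta + O(\varepsilon^{3}),
\end{align*}
and using the two facts $Z(R(\mu_c),\mu_c)=0$ and $M_0(\mu_c)=0$ this collapses to $\tfrac{\varepsilon^{2}}{2}M_1(\mu_c)+\delta+O(\varepsilon^{3})=0$. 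Since $M_1(\mu_c)<0$ (recall $J_1(j_0)>0$), solving yields $\varepsilon\sim\sqrt{-2\delta/M_1(\mu_c)}$. Feeding this back into $M_0(\mu)=\partial_r Z(R(\mu_c)-\varepsilon,\mu_c-\delta)=-\varepsilon M_1(\mu_c)+O(\varepsilon^{2})$ gives
\begin{align*}
M_0(\mu)\ \sim\ \sqrt{-2(\mu_c-\mu)M_1(\mu_c)}\ =\ \sqrt{\tfrac{8\pi^{2}J_1(j_0)}{j_0}(\mu_c-\mu)},
\end{align*}
as claimed.

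There is no real obstacle here: everything is a routine Bessel-function evaluation plus one second-order implicit Taylor expansion. The only delicate point is identifying $R(\mu_c)$ as corresponding to the \emph{first} zero of $J_0$, which requires knowing that $R(\mu)$ increases continuously from $0$ as $\mu$ grows from $0$ to $\mu_c$; this monotonicity follows from the implicit function theorem applied on $[0,\mu_c)$, where $\partial_r Z(R(\mu),\mu)=M_0(\mu)>0$.
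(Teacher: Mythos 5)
Your proof is correct and takes essentially the same approach as the paper: evaluate the Bessel formula \eqref{moments} at $\mu_c$ to get $R(\mu_c)$, $M_1(\mu_c)$ and $\mu_c$, then do a second-order Taylor expansion of $Z$ around $(R(\mu_c),\mu_c)$ and use $M_0(\mu_c)=0$ to extract the square-root scaling of $R(\mu_c)-R(\mu)$ and hence of $M_0(\mu)$. The only cosmetic differences are that you fold the paper's two displayed expansions \eqref{intermediate_equation0}--\eqref{other_intermediate_equation1} into a single implicit expansion of $Z(R(\mu_c)-\varepsilon,\mu_c-\delta)=0$, and you make explicit (via the implicit function theorem and $M_0>0$ on $[0,\mu_c)$) the monotonicity of $R$ that justifies identifying $2\pi\sqrt{2R(\mu_c)}$ with the \emph{first} zero $j_0$ of $J_0$, a point the paper leaves implicit in its choice of definition of $\mu_c$.
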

\begin{proof}
From \eqref{moments} and using the fact that $M_0(\mu_{c}) = 0$, we deduce that $J_0(2\pi\sqrt{2R(\mu_c)}) = 0$. It follows that $R(\mu_{c}) = \frac{j_0^2}{8\pi^2}$. Still using \eqref{moments}, we get $ M_1(\mu_{c}) = \displaystyle \frac{\partial^2 Z}{\partial^2 r}\bigg(R(\mu_{c}),\mu_{c}\bigg) = -\frac{\sqrt{2}\pi}{\sqrt{R(\mu_c)}}J_1(2\pi\sqrt{2R(\mu_c)}) = -\frac{4\pi^2 J_1(j_0)}{j_0}$.\\
 We also write
\begin{align*}
    M_0(\mu) = \frac{\partial Z}{\partial r}\bigg(R(\mu),\mu\bigg) = \frac{\partial Z}{\partial r}\bigg(R(\mu),\mu_{c}\bigg),
\end{align*}
where the second equality follows from the definition in \eqref{def_Z}. Thus we can rewrite this as
\begin{align} \label{intermediate_equation0}
    M_0(\mu) = \frac{\partial Z}{\partial r}\bigg(R(\mu_{c}),\mu_{c}\bigg) + \frac{\partial^2 Z}{\partial^2 r}\bigg(R(\mu_{c}),\mu_{c}\bigg)\bigg(R(\mu)-R(\mu_c)\bigg) + o\bigg(R(\mu)-R(\mu_c)\bigg).
\end{align}
 Using \eqref{def_Z}, we can also write 
\begin{align}\label{other_intermediate_equation1}
    \mu - \mu_c &=Z(R(\mu_c),\mu_c)-Z(R(\mu),\mu_c)\\&\nonumber = \frac{1}{2}\frac{\partial^2 Z}{\partial^2 r}\bigg(R(\mu_{c}),\mu_{c}\bigg)\bigg(R(\mu_c) - R(\mu)\bigg)^2 +o\bigg(R(\mu_c) - R(\mu)\bigg)^2.
\end{align}
Putting together \eqref{intermediate_equation0} and \eqref{other_intermediate_equation1}, we deduce
\begin{align*}
    M_0(\mu) \underset{\mu \to \mu_c}{\sim}\bigg(\frac{8\pi^2 J_1(j_0)}{j_0}(\mu_c -\mu)\bigg)^{\frac{1}{2}}
\end{align*}
Finally, writing $\displaystyle Z(R(\mu_c),\mu_c) = 0$, we deduce that $\mu_c = \displaystyle \frac{\sqrt{R(\mu_c)}}{\sqrt{2}\pi}J_1(2\pi\sqrt{2R(\mu_c)})  =\displaystyle \frac{j_0J_1(j_0)}{4\pi^2}$.
\end{proof}
\begin{remark1}
    This value for $\mu_c$ is consistent with \cite[Theorem $6.1$]{manin1999invertible}.
\end{remark1}
\noindent Using \eqref{moments} and the fact that $|J_k(x)| \le 1$ for every $k \ge 0$ and $x \ge 0$ and Proposition~\ref{asymptotics_M1_M0_R}, there exists an absolute constant $a>1$ such that for all $k \geq 0$ and any $\mu \in [0,\mu_c)$, we have
\begin{align}\label{bound_Mk}
    |M_k(\mu)| \le a^k \text{ and } |M_1(\mu)| \in [a^{-1},a].
\end{align}
For $g\ge 2$, let us recall the expression of $\mathcal{P}_{g,0}(\mathbf{m})$ given in \eqref{Pg0_expression},
\begin{align*}
    \mathcal{P}_{g,0}(\mathbf{m}) = \somme{\substack{d_2,d_3\cdots \ge 0\\ \somme{k=2}{3g-2}{(k-1)d_k} = 3g-3}}{}{\langle \tau_{2}^{d_2}\cdots\tau_{3g-2}^{d_{3g-2}}\rangle_g \prod_{k=2}^{3g-2}\frac{(-m_{k-1})^{d_k}}{d_k!}}.
\end{align*}
We want to give asymptotics for $\displaystyle \mathcal{P}_{g,n}\bigg(\mathbf{L},\frac{M_1(\mu)}{M_0(\mu)},\cdots,\frac{M_{3g-3+n}(\mu)}{M_0(\mu)}\bigg)$. In the following, we use the notation $\mathbf{m} = (m_1,\dots,m_{3g-3+n})$ and $\mathbf{M}(\mu) = \displaystyle \bigg(\frac{M_1(\mu)}{M_0(\mu)},\cdots,\frac{M_{3g-3+n}(\mu)}{M_0(\mu)}\bigg)$. We do not specify the dependence in $\mu$ when it is clear from the context. For $\mathbf{p} = (p_1,\cdots,p_k) \in (\mathbb{N}^{*})^{k}$, we define $\displaystyle \frac{\partial}{\partial m_{\mathbf{p}}} = \frac{\partial^{k}}{\partial m_{p_1}\cdots \partial m_{p_k}}$. For $x\in \mathbb{N}$ and $\mathbf{p} \in \mathbb{N}^k$, we write $(x,\mathbf{p}):= (x,p_1,\cdots,p_k)$.\\
As we will see in this Section, it will be convenient to rescale the lengths $\mathbf{L}$ by $\bigg(\frac{-M_1}{3M_0}\bigg)^{-\frac{1}{2}}$. Thus for $p_1,\cdots,p_k\ge 1$ we introduce
\begin{align}\label{def_rescaled_columes}
    \mathcal{T}_{n,g,\mathbf{p}}(\mathbf{L},\mathbf{m}):=\frac{\partial \mathcal{P}_{g,n}}{\partial m_{\mathbf{p}}}\bigg(\sqrt{-m_1/3}\,\mathbf{L},\mathbf{m}\bigg), 
\end{align}
and for $q_1,\cdots,q_n \ge 0$, we introduce the coefficient associated to $L_1^{2q_1}\cdots L_n^{2q_n}$
\begin{align}\label{def_coeff_c}   \alpha_{n,g,\mathbf{p},\mathbf{q}} :=   [L_1^{2q_1}\cdots L_n^{2q_n}]\mathcal{T}_{n,g,\mathbf{p}}(\mathbf{L},\mathbf{m})= [L_1^{2q_1}\cdots L_n^{2q_n}]\frac{\partial \mathcal{P}_{g,n}}{\partial m_{\mathbf{p}}}\bigg(\sqrt{-m_1/3}\,\mathbf{L},\mathbf{m}\bigg).
\end{align}\\
When $n=0$ we do not specify the dependence in $n$. Same if $\mathbf{q} = (0,\cdots,0)$ or if $\mathbf{p}=\emptyset$.\\
Let us also give crude estimates on $\mathcal{T}_{n,g}(\mathbf{L},\mathbf{M})$ and $\mathcal{T}_{n,g}(\mathbf{M}) = \alpha_{n,g}(\mathbf{M})$.\\
\begin{proposition}\label{crude_bound} For any $g,n \ge 0$ such that $2g+n-3 \ge 0$ and any compact $K \subset (\mathbb{R}_{+})^{n}$ we have 
    \begin{align*}
        \forall \mu \in [0,\mu_c),\text{ }\forall \mathbf{L}\in K, \hspace{0.3cm} |\mathcal{T}_{n,g}(\mathbf{L},\mathbf{M})| \le  C_{n,g,K}\bigg(\frac{-M_1}{M_0}\bigg)^{3g-3+n},
    \end{align*}
    where $C_{n,g,K} > $ is a constant that depends on $n,g$ and $K$.
    Moreover, we have
    \begin{align*}
        \alpha_{n,g}(\mathbf{M}) \underset{\mu \to \mu_c}{\sim} C_{n,g}\bigg(\frac{-M_1}{M_0}\bigg)^{3g-3+n},
    \end{align*}
    where $C_{n,g} > 0$ only depends on $n$ and $g$.
\end{proposition}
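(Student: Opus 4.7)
My plan is to exploit the weighted homogeneity of $\mathcal{P}_{g,n}$ granted by Theorem~\ref{polynomial_recursion}: every monomial $c_{\mathbf{q},\mathbf{d}}\prod_i L_i^{2q_i}\prod_k m_k^{d_k}$ of $\mathcal{P}_{g,n}(\mathbf{L},\mathbf{m})$ satisfies $|\mathbf{q}|+\sum_k k d_k = 3g-3+n$. After substituting $L_i \mapsto \sqrt{-M_1/(3M_0)}\,L_i$ and $m_k=M_k/M_0$, such a monomial can be rewritten as
\begin{equation*}
 c_{\mathbf{q},\mathbf{d}}\,3^{-|\mathbf{q}|}\prod_i L_i^{2q_i}\,R_{\mathbf{q},\mathbf{d}}\,\Bigl(\frac{-M_1}{M_0}\Bigr)^{3g-3+n}\quad\text{with}\quad R_{\mathbf{q},\mathbf{d}}=\frac{\prod_k M_k^{d_k}\,M_0^{\sum_k(k-1)d_k}}{(-M_1)^{\sum_k k d_k}}.
\end{equation*}

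For the first inequality I would combine \eqref{bound_Mk} with the elementary bound $|M_0|=|J_0(2\pi\sqrt{2R(\mu)})|\le 1$ on $[0,\mu_c)$ to get $|R_{\mathbf{q},\mathbf{d}}|\le a^{2(3g-3+n)}$; summing the finitely many monomials and using compactness of $K$ to bound $\prod_i L_i^{2q_i}$ then yields $|\mathcal{T}_{n,g}(\mathbf{L},\mathbf{M})|\le C_{n,g,K}(-M_1/M_0)^{3g-3+n}$ uniformly in $\mu\in[0,\mu_c)$ and $\mathbf{L}\in K$.

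For the asymptotic, setting $\mathbf{L}=\mathbf{0}$ retains only monomials with $\mathbf{q}=\mathbf{0}$. A surviving $\mathbf{d}$-term contributes $c_{\mathbf{0},\mathbf{d}}\prod_k M_k^{d_k}/M_0^{\sum_k d_k}$, which by Proposition~\ref{asymptotics_M1_M0_R} is of order $(-M_1/M_0)^{\sum_k d_k}$ as $\mu\to\mu_c$. Under the constraint $\sum_k k d_k=3g-3+n$, the quantity $\sum_k d_k$ is uniquely maximised at $\mathbf{d}^{*}=(3g-3+n,0,0,\ldots)$, while every other index yields a strictly lower power of $-M_1/M_0$. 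Writing $c_{g,n}:=[m_1^{3g-3+n}]\mathcal{P}_{g,n}(\mathbf{0},m_1,0,\ldots,0)$, this gives $\alpha_{n,g}(\mathbf{M})\sim (-1)^{3g-3+n}c_{g,n}(-M_1/M_0)^{3g-3+n}$, and the task reduces to showing $C_{n,g}:=(-1)^{3g-3+n}c_{g,n}>0$.

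The hard step is this positivity, which I would handle by induction on $n$. Introduce $\tilde{P}_{g,n}(m_1):=\mathcal{P}_{g,n}(\mathbf{0},m_1,0,\ldots,0)$ and specialise \eqref{recursion_relation} to $\mathbf{L}=\mathbf{0}$ with $m_k=0$ for $k\ge 2$: the integral term and every piece carrying a factor $m_p$ or $m_{p+1}$ with index $\ge 2$ vanishes, leaving
\begin{equation*}
\tilde{P}_{g,n}(m_1) = -m_1^{2}\,\tilde{P}_{g,n-1}'(m_1) - (2g-3+n)\,m_1\,\tilde{P}_{g,n-1}(m_1).
\end{equation*}
Matching leading coefficients gives $c_{g,n}=-(5g-7+2n)\,c_{g,n-1}$, and $5g-7+2n>0$ whenever the recursion applies, so the sign alternates step by step. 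It then remains to verify the base cases $\mathcal{P}_{0,3}=1$, $\mathcal{P}_{1,1}(\mathbf{0},\mathbf{m})=-m_1/24$, and for $g\ge 2$ the formula $c_{g,0}=(-1)^{3g-3}\langle\tau_{2}^{3g-3}\rangle_g/(3g-3)!$ read off from \eqref{Pg0_expression}; positivity of the $\psi$-class intersection number closes the induction and establishes $C_{n,g}>0$.
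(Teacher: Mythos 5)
Your proof is correct and follows the same strategy as the paper: exploit the weighted homogeneity of $\mathcal{P}_{g,n}$ together with the bound \eqref{bound_Mk} to get the uniform estimate, and identify $m_1^{3g-3+n}$ as the dominant monomial after substituting $\mathbf{m}\mapsto\mathbf{M}$. The paper merely asserts that the nonvanishing of $[m_1^{3g-3+n}]\alpha_{n,g}$ ``can be checked by induction using Theorem~\ref{polynomial_recursion}''; your explicit specialisation of \eqref{recursion_relation} to the recursion $c_{g,n}=-(5g-7+2n)\,c_{g,n-1}$, together with the base cases $\mathcal{P}_{0,3}$, $\mathcal{P}_{1,1}$, and $\mathcal{P}_{g,0}$ and the positivity of $\psi$-class intersection numbers, is precisely the omitted argument and also establishes the sign $C_{n,g}>0$ that the paper's brief remark does not quite address.
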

\begin{proof}
    By Theorem~\ref{polynomial_recursion}, the polynomial $\mathcal{P}_{n,g}\bigg(\sqrt{\sigma}\mathbf{L},(\sigma m_1,\cdots,\sigma^{3g-3+n}m_{3g-3+n})\bigg)$ is homogeneous of degree $3g-3+n$ in $\sigma$. Replacing $\mathbf{m}$ by $\mathbf{M}$ and noticing that due to \eqref{bound_Mk} there exists a constant $c>1$ such that $|M
    _k/M_0| \leq |c\,M_1/M_0|^k$ for all $\mu\in[0,\mu_c)$ and $k\geq 1$, the claimed bound follows. The second part of the proposition follows from the fact that $[m_1^{3g-3+n}]\alpha_{n,g} \neq 0$. This can be checked by induction using Theorem~\ref{polynomial_recursion}. Then replacing $\mathbf{m}$ by $\mathbf{M}$ and letting $\mu \to \mu_c$, the dominant term is the one corresponding to $m_1^{3g-3+n}$. This concludes the proof with $C_{n,g} = (-1)^{3g-3+n}[m_1^{3g-3+n}]\alpha_{n,g}$.
\end{proof}
\noindent 
The purpose of this section is to provide precise asymptotics and estimates on the coefficients $\alpha_{n,g,\mathbf{p},\mathbf{q}}(\mathbf{M})$.
Our main tool will be Theorem~\ref{polynomial_recursion} since this allows us to go from $\mathcal{P}_{g,n}$ to $\mathcal{P}_{g,n+1}$.  
For $g \ge 2$ and $(p_1,\cdots,p_k)\in (\mathbb{N}^{*})^k$ such that $|\mathbf{p}|\le 3g-3$, we introduce $\varphi(g,\mathbf{p})$ for the combination
\begin{align}\label{def_phi_gp}
    \varphi(g,\mathbf{p}) = (-1)^{k}\frac{\langle \tau_{p_1+1}\cdots \tau_{p_k +1 }\tau_2^{3g-3-|\mathbf{p}|}\rangle_{g}\displaystyle\bigg(\frac{-M_1}{M_0}\bigg)^{3g-3-|\mathbf{p}|}}{(3g-3-|\mathbf{p}|)!}
\end{align}
that we will encounter later.
Furthermore, for $n \ge 0$, we define
\begin{align}\label{def_phi}
    \varphi(n,g,\mathbf{p}) &= \varphi(g,\mathbf{p})\left(\frac{-M_1}{M_0}\right)^{n}(5g)^n \\&\nonumber= (-1)^{k}\frac{\langle \tau_{p_1+1}\cdots \tau_{p_k +1 }\tau_2^{3g-3-|\mathbf{p}|}\rangle_{g}\displaystyle\bigg(\frac{-M_1}{M_0}\bigg)^{3g-3+n-|\mathbf{p}|}}{(3g-3-|\mathbf{p}|)!}(5g)^n.
\end{align}
When $n = 0$ or $\mathbf{p}=\emptyset$, we again suppress the argument to ease the notation. A direct application of Proposition~\ref{uniform_bound_on_intersection_numbers} gives
\begin{align}\label{bound_phi_with_phi_p0}
    |\varphi(n,g,\mathbf{p})| \le \bigg(\frac{-M_1}{M_0}\bigg)^{n-|\mathbf{p}|}\frac{(5g)^{k+n}3^{k+|\mathbf{p}|}}{\prod_{i=1}^{k}(2p_i+3)!!}\varphi(g).
\end{align}
For $n$, $\mathbf{p}$ and $q \ge 1$ fixed, using \eqref{intersection_numbers} and \eqref{def_phi}, as $g \to +\infty$, we let the reader verify that we have
\begin{align}
    &\varphi(n,g,(1,\mathbf{p})) \underset{g\to +\infty}{\sim} -\bigg(\frac{-M_1}{M_0}\bigg)^{-1}(3g)\varphi(n,g,\mathbf{p}),\label{few_equations_phi}\\
    &     \varphi\bigg(n,g,(q,\mathbf{p})\bigg) \underset{g \to +\infty}{\sim} -\frac{2^{q+1}(q+1)!\displaystyle \bigg(-\frac{M_1}{3M_0}\bigg)^{-q-1} }{(2q+3)!}\varphi(n+1,g,\mathbf{p}),\label{few_equations_phi2}\\
    &\varphi(n,g,\mathbf{p}) \sim C_{n,\mathbf{p}}\bigg(\frac{-M_1}{M_0}\bigg)^{n-|\mathbf{p}|}g^{2k+n}\varphi(g),\label{asymptotic_phi_n_p_fixed}
\end{align}
where $C_{n,\mathbf{p}} > 0$ only depends on $n$ and $\mathbf{p}$.\\
First, we focus on $\alpha_{g,\mathbf{p}}(\mathbf{M})=\displaystyle \frac{\partial \mathcal{P}_{g,0}}{\partial m_{\mathbf{p}}}(\mathbf{M})$. Using Theorem~\ref{polynomial_recursion}, we write
    \begin{align}\label{expression_derivative_Pg0}
        \displaystyle \frac{\partial^{}\mathcal{P}_{g,0}}{\partial m_{\mathbf{p}}} = (-1)^{k}\displaystyle \somme{\substack{d_2,d_3\cdots \ge 0\\ \somme{k=2}{3g-2}{(k-1)d_k} = 3g-3-|\mathbf{p}|}}{}{\langle \tau_{p_1+1}\cdots \tau_{p_k+1}\tau_{2}^{d_2}\cdots\tau_{3g-2}^{d_{3g-2}}\rangle_g\displaystyle  \prod_{k=2}^{3g-2}\frac{(-m_{k-1})^{d_k}}{d_k!} }.
    \end{align}
    We show in the next proposition that the sum is concentrated around the term corresponding to $d_2=3g-3-|\mathbf{p}|$. Indeed, replacing $\mathbf{m}$ by $\mathbf{M}$, for $g$ fixed, one can let $\mu \underset{}{\rightarrow} \mu_c$ and note that the sum is equivalent to the term given by $d_2 = 3g-3-|\mathbf{p}|$. We highlight the fact that this is the only reason we need to work with the assumption $\mu_c - \mu_g = o(g^{-2})$.
\begin{proposition}\label{split_sum}
    There exists a function $r(g,\mu)$ such that for every $g\geq 2$, $\mu\in[0,\mu_c)$ and $p_1,\cdots,p_k \ge 1$ such that $|\mathbf{p}| \le 3g-3$, we have
    \begin{align*}
\displaystyle \bigg|\displaystyle\frac{\alpha_{g,\mathbf{p}}(\mathbf{M})}{\varphi(g,\mathbf{p})}-1\bigg| \le r(g,\mu).
    \end{align*}
Moreover, one may choose $r(g,\mu)$ such that $r(g,\mu) \underset{\mu \to \mu_c}{\rightarrow}0$ for any $g\geq 2$ fixed and $r(g,\mu_g) \underset{g \to +\infty}{\rightarrow}0$ when $\mu_c - \mu_g = o(g^{-2})$.
\end{proposition}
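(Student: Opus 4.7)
The plan is to identify $\varphi(g,\mathbf{p})$ as exactly the summand in the expansion \eqref{expression_derivative_Pg0} of $\alpha_{g,\mathbf{p}}(\mathbf{M})$ corresponding to $d_2 = D_2$ (where $D_2 := 3g-3-|\mathbf{p}|$) and $d_j=0$ for $j\geq 3$, and then to control all the other summands. Writing $\tilde M_j := -M_j/M_0$ and using the constraint $\sum_{j\geq 2}(j-1)d_j = D_2$, the product of $M$-factors in a generic summand rearranges as
\begin{align*}
\prod_{k=2}^{3g-2}\frac{\tilde M_{k-1}^{d_k}}{d_k!} = \frac{\tilde M_1^{D_2}}{D_2!}\cdot\frac{D_2!}{d_2!\prod_{j\geq 3}d_j!}\prod_{j\geq 3}\left(\frac{\tilde M_{j-1}}{\tilde M_1^{j-1}}\right)^{d_j},
\end{align*}
so that $\alpha_{g,\mathbf{p}}(\mathbf{M})/\varphi(g,\mathbf{p})$ becomes a sum over $\mathbf{d}$ whose $\mathbf{d} = (D_2,0,\ldots,0)$ summand equals exactly $1$.

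Next I would bound each other summand in absolute value by invoking Proposition~\ref{uniform_bound_on_intersection_numbers} with $\mathbf{q}$ the multiset containing $d_j$ copies of $j-1$ for each $j\geq 3$ (so $|\mathbf{q}| = D_2 - d_2$ and $r = \sum_{j\geq 3}d_j$), which yields
\begin{align*}
\frac{\langle \tau_{p_1+1}\cdots \tau_{p_k+1}\tau_2^{d_2}\prod_{j\geq 3}\tau_j^{d_j}\rangle_g / d_2!}{\langle \tau_{p_1+1}\cdots \tau_{p_k+1}\tau_2^{D_2}\rangle_g / D_2!} \leq \frac{3^{D_2-d_2}(15g)^{\sum_{j\geq 3}d_j}}{\prod_{j\geq 3}(2j+1)!!^{d_j}}.
\end{align*}
Combining this with the crude bound $|\tilde M_{j-1}/\tilde M_1^{j-1}|\leq a^{2(j-1)}M_0^{j-2}$ extracted from \eqref{bound_Mk}, and factoring $3^{D_2-d_2}=\prod_{j\geq 3}3^{(j-1)d_j}$, the absolute value of the summand indexed by $\mathbf{d}$ is bounded by $\prod_{j\geq 3}A_j^{d_j}/d_j!$ with
\begin{align*}
A_j := \frac{3^{j-1}\cdot 15 g \cdot a^{2(j-1)}}{(2j+1)!!}\, M_0^{j-2}.
\end{align*}
Since every bound is nonnegative, I may drop the constraint $\sum(j-1)d_j=D_2$ and sum freely, which gives a product of exponentials,
\begin{align*}
\left|\frac{\alpha_{g,\mathbf{p}}(\mathbf{M})}{\varphi(g,\mathbf{p})} - 1\right| \leq \exp\Bigl(\sum_{j\geq 3}A_j\Bigr) - 1 =: r(g,\mu).
\end{align*}

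To conclude, I would check that once $M_0$ is small enough so that $3a^2 M_0 \leq 1/2$, the ratio $A_{j+1}/A_j$ is dominated by a geometric factor, and the series is controlled by its $j=3$ term: $\sum_{j\geq 3} A_j \leq C g M_0$ for an absolute constant $C$. By Proposition~\ref{asymptotics_M1_M0_R}, $M_0(\mu) \sim c\sqrt{\mu_c-\mu}$ as $\mu\to\mu_c$, so for any fixed $g$ one has $r(g,\mu)\to 0$ as $\mu\to\mu_c$; and the hypothesis $\mu_c-\mu_g = o(g^{-2})$ is precisely what gives $g M_0(\mu_g) = o(1)$, hence $r(g,\mu_g)\to 0$.

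The main obstacle is the bookkeeping in the second step: one must carefully verify that the multinomial factor $D_2!/(d_2!\prod_{j\geq 3}d_j!)$ coming from the rearrangement of $M$-factors combines correctly with the $D_2!/d_2!$ from Proposition~\ref{uniform_bound_on_intersection_numbers} and with the factor $3^{|\mathbf{q}|}(15g)^r/\prod(2q_i+3)!!$ on its right-hand side, so that the upper bound really factorizes as $\prod_j A_j^{d_j}/d_j!$. It is also this factorization that determines the power $M_0^{j-2}$ in $A_j$, and hence the precise threshold $\mu_c-\mu_g = o(g^{-2})$ under which $g M_0(\mu_g)\to 0$ and the exponential bound collapses to $0$.
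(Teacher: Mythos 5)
Your proof is correct and uses essentially the same approach as the paper: isolate the $d_2 = 3g-3-|\mathbf{p}|$ term of \eqref{expression_derivative_Pg0} as $\varphi(g,\mathbf{p})$, bound each other summand via Proposition~\ref{uniform_bound_on_intersection_numbers} together with $|M_k|\leq a^k$ and $|M_1|\geq a^{-1}$, then drop the degree constraint to sum to $\exp(\sum_{j\geq 3}A_j)-1$ with $\sum A_j$ controlled by $gM_0$. If anything your bookkeeping is a little tighter than the paper's: you correctly carry the $(2j+1)!!$ and $a^{2(j-1)}$ factors (the paper's \eqref{uniform_bound_for_intersection} has a small off-by-one slip, writing $(2i+3)!!$), and you sum $\sum A_j$ by geometric comparison rather than by the termwise bound $3^{k-1}(15g)/(2k+1)!!\leq g^{k-2}$, which in fact narrowly fails at $k=3$ but only costs an absolute constant.
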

\begin{proof}
 Fix $g\ge 2$ and $p_1,\cdots,p_k \ge 1$ such that $|\mathbf{p}| \le 3g-3$. We recall that \eqref{expression_derivative_Pg0} gives
    \begin{align*}
        \displaystyle \alpha_{g,\mathbf{p}} = (-1)^{k}\displaystyle \somme{\substack{d_2,d_3\cdots \ge 0\\ \somme{k=2}{3g-2}{(k-1)d_k} = 3g-3-|\mathbf{p}|}}{}{\langle \tau_{p_1+1}\cdots \tau_{p_k+1}\tau_{2}^{d_2}\cdots\tau_{3g-2}^{d_{3g-2}}\rangle_g\displaystyle  \prod_{k=2}^{3g-2}\frac{(-m_{k-1})^{d_k}}{d_k!} }.
    \end{align*}
Fix $d_2,\cdots,d_{3g-2}$ in the sum. Let $\mathbf{q} = (q_1,\ldots,q_r)$ be so that
\begin{align*}
    (q_1+1,q_2+1,\cdots,q_r+1) = (\underbrace{3,\dots,3}_{d_3 \text{ times}},\cdots,\underbrace{3g-2,\dots,3g-2}_{d_{3g-2} \text{ times}}).
\end{align*}
With this notation, we have $r = \somme{i=3}{3g-2}{d_i}$ and $|\mathbf{q}| = \somme{i=3}{3g-2}{(i-1)d_i}$ and $d_2 = 3g-3-|\mathbf{p}|-|\mathbf{q}|$.
Applying Proposition~\ref{uniform_bound_on_intersection_numbers}, we find the bound
\begin{align}\label{uniform_bound_for_intersection}
     \frac{\langle \tau_{p_1+1}\cdots \tau_{p_k+1}\tau_{2}^{d_2}\cdots\tau_{3g-2}^{d_{3g-2}}\rangle_g}{d_2 !} &\le  \frac{\langle \tau_{p_1+1}\cdots \tau_{p_k+1}\tau_{2}^{3g-3-|\mathbf{p}|}\rangle_g}{(3g-3-|\mathbf{p}|)!} \cdot \frac{3^{\somme{i=3}{3g-2}{(i-1)d_i}} (15g)^{\somme{i=3}{3g-2}{d_i}}}{\prod_{i=3}^{3g-2}(2i+3)!!^{d_i}}.
\end{align}
Replacing $\mathbf{m}$ by $\mathbf{M}$ and using \eqref{uniform_bound_for_intersection}, we have
\begin{align}
    \langle \tau_{p_1+1}\cdots \tau_{p_k+1}\tau_{2}^{d_2}\cdots\tau_{3g-2}^{d_{3g-2}}\rangle_g \prod_{k=2}^{3g-2}\frac{\displaystyle \bigg|\frac{M_{k-1}}{M_0}\bigg|^{d_k}}{d_k!} \le &\frac{\langle \tau_{p_1+1}\cdots \tau_{p_k+1}\tau_{2}^{3g-3-|\mathbf{p}|}\rangle_g}{(3g-3-|\mathbf{p}|)!}\bigg(-\frac{M_1}{M_0}\bigg)^{d_2}\nonumber\\
    &\times  \prod_{k=3}^{3g-2}\frac{3^{(k-1)d_k}(15g)^{d_k} \bigg|{\displaystyle\frac{M_{k-1}}{M_0}}\bigg|^{d_k}}{(2k+3)!!^{d_k}d_k !}.\label{intermediate_equation}
\end{align}

We recall that $d_2 = 3g-3-|\mathbf{p}| - \somme{i=3}{3g-2}{(i-1)d_i}$. Using \eqref{bound_Mk}, we can write
\begin{align*}
   \bigg(-\frac{M_1}{M_0}\bigg)^{d_2}\prod_{k=3}^{3g-2}\bigg|\frac{M_{k-1}}{M_0}\bigg|^{d_k} \le  \bigg(-\frac{M_1}{M_0}\bigg)^{3g-3-|\mathbf{p}|}\prod_{k=3}^{3g-2}(a^2M_0)^{d_k(k-2)}.
\end{align*}
It follows that the expression on the right-hand side of \eqref{intermediate_equation} is bounded by
\begin{align*}
    |\varphi(g,\mathbf{p})| \prod_{k=3}^{3g-2}\frac{3^{(k-1)d_k}(15g)^{d_k}\big(a^2M_0\big)^{d_k(k-2)}}{(2k+3)!!^{d_k}d_k!} \le |\varphi(g,\mathbf{p})| \prod_{k=3}^{3g-2}\frac{\big(a^2gM_0\big)^{d_k(k-2)}}{d_k!},
\end{align*}
where the inequality holds because $3^{k-1}(15g)/(2k+3)!!\le g^{k-2}$.
So we conclude that
\begin{align*}
\bigg|\displaystyle\frac{\alpha_{g,\mathbf{p}}(\mathbf{M})}{\varphi(g,\mathbf{p})}-1\bigg| &\le 
    \somme{\substack{d_2,d_3\cdots \ge 0\\ \somme{k=2}{3g-2}{(k-1)d_k} = 3g-3-|\mathbf{p}|\\d_2 < 3g-3-|\mathbf{p}|}}{}{}\prod_{k=3}^{3g-2}\frac{\big(a^2gM_0\big)^{d_k(k-2)}}{d_k!} \\
    &\le -1+ \prod_{k=3}^{3g-2} \sum_{d_k=0}^\infty\frac{\big(a^2gM_0\big)^{d_k(k-2)}}{d_k!} \\
    &\le r(g,\mu) := \exp\bigg(\somme{k=3}{3g-2}{(a^2gM_0)^{k-2}}\bigg) - 1,
\end{align*}
From Proposition~\ref{asymptotics_M1_M0_R} we see that this choice of $r(g,\mu)$ indeed approaches $0$ for both limits because $g M_0 \to 0$.
\end{proof}

Now, we aim to give an estimate for $\displaystyle \alpha_{n,g,\mathbf{p}}(\mathbf{M})$. To do so, we need a formula for $\displaystyle \alpha_{n+1,g,\mathbf{p}}$ depending on $\displaystyle \alpha_{n,g,\mathbf{p}^{'}}$. By \eqref{recursion_relation}, we can write
    \begin{align}\label{apply_recursion}
       \alpha_{n+1,g}= \somme{p=1}{3g-3+n}{\bigg(m_{p+1}-m_1 m_p}\bigg) \alpha_{n,g,p} -m_1 (2g-2+n) \alpha_{n,g}.
    \end{align}
For any $p_1,\cdots,p_k \ge 1$, we define 
\begin{align}\label{def_A}
    \mathcal{R}_{g,n,1,\mathbf{p}} = &-2m_1 \alpha_{n,g,(1,\mathbf{p})}- \somme{p=2}{3g-3+n}{m_p\alpha_{n,g,(p,\mathbf{p})}}   - (2g-2+n)\alpha_{n,g,\mathbf{p}},
\end{align}
and for $2 \le j \le 3g-3+n+1$ we define
\begin{align}\label{def_B}
   \mathcal{R}_{g,n,j,\mathbf{p}} = \alpha_{n,g,(j-1,\mathbf{p})} -m_1\alpha_{n,g,(j,\mathbf{p})}.
\end{align}
Now, observe that for any $1 \le j \le 3g-3+n+1$ and $p_1,\dots,p_k \ge 0$, we have
\begin{align}
    &\hspace{1.1cm}\frac{\partial}{\partial m_j}\bigg(\somme{p=1}{3g-3+n}{\bigg(m_{p+1}-m_1 m_p}\bigg) \alpha_{n,g,(p,\mathbf{p})} -m_1 (2g-2+n) \alpha_{n,g,\mathbf{p}}\bigg)\\
    &=\nonumber \mathcal{R}_{g,n,j,\mathbf{p}}+\somme{p=1}{3g-3+n}{\bigg(m_{p+1}-m_1 m_p}\bigg) \alpha_{n,g,(p,j,\mathbf{p})} -m_1 (2g-2+n) \alpha_{n,g,(j,\mathbf{p})}.
\end{align}
It follows that we have
\begin{align}\label{expression_derivative_n_1}
    \alpha_{n+1,g,\mathbf{p}} &= \somme{j=1}{k}{\frac{\partial \mathcal{R}_{g,n,p_j,\mathbf{p}_{\le j-1}}}{\partial m_{\mathbf{p}_{\ge j+1}}}}\\
    &\nonumber+\somme{p=1}{3g-3+n}{\bigg(m_{p+1}-m_1 m_p}\bigg) \alpha_{n,g,(p,\mathbf{p})} -m_1 (2g-2+n) \alpha_{n,g,\mathbf{p}}.
\end{align}
For $1 \le j \le k$, we define 
\begin{align}\label{def_A}
\mathcal{A}_{g,n,\mathbf{p},j} := \frac{\partial \mathcal{R}_{g,n,p_j,\mathbf{p}_{\le j-1}}}{\partial m_{\mathbf{p}_{\ge j+1}}} = \left\{\begin{array}{ll}
         \begin{array}{ll}
    &\displaystyle  -2m_1\alpha_{n,g,\mathbf{p}}-2\somme{\substack{r=j+1\\p_r=1}}{k}{\alpha_{n,g,\mathbf{p}_{\widehat{\{r\}}}}}  \\&\displaystyle  - \somme{p=2}{3g-3+n}{m_p\alpha_{n,g,(p,\mathbf{p}_{\widehat{\{j\}}})}} -\somme{\substack{r=j+1\\p_r \ge 2}}{k}{\alpha_{n,g,\mathbf{p}_{\widehat{\{j\}}}}}\\&   \displaystyle - (2g-2+n)\alpha_{n,g,\mathbf{p}_{\widehat{\{j\}}}}
         \end{array}&  \text{ if } p_j = 1.\\
\displaystyle \alpha_{n,g,(p_j -1,\mathbf{p}_{\widehat{\{j\}}})}- m_1 \alpha_{n,g,\mathbf{p}} - \somme{\substack{r=j+1\\p_r=1}}{k}{\alpha_{n,g,\mathbf{p}_{\widehat{\{r\}}}}}& \text{ if } p_j \neq 1.
   \end{array}\right.
\end{align}
We have the following concise formula
\begin{align}\label{recursion_polynomial_with_derivative}
\alpha_{n+1,g,\mathbf{p}} &= \somme{j=1}{k}{\mathcal{A}_{g,n,\mathbf{p},j} }+\somme{p=1}{3g-3+n}{\bigg(m_{p+1}-m_1 m_p}\bigg) \alpha_{n,g,(p,\mathbf{p})} -m_1 (2g-2+n) \alpha_{n,g,\mathbf{p}}.
\end{align}

\begin{proposition}\label{uniform_bound_derivative_and_A}
    For any $\mu_g \underset{g \to +\infty}{\rightarrow}\mu_c$ such that $\mu_c - \mu_g = o(g^{-2})$ and any $n\ge 0$, we have for $g$ large enough
    \begin{align*}
&\forall \mathbf{p}\in (\mathbb{N}^{*})^{k}, \hspace{0.3cm}\big|\alpha_{n,g,\mathbf{p}}(\mathbf{M})\big| \le C_n \bigg( \frac{-M_1}{M_0}\bigg)^{n-|\mathbf{p}|}g^{n+k}A^{k}\varphi(g),\\& \forall\mathbf{p}\in (\mathbb{N}^{*})^{k}, \hspace{0.2cm} \forall j \in \{1,\cdots,k\}, \hspace{0.3cm}\big|\mathcal{A}_{g,n,\mathbf{p},j}(\mathbf{M})\big| \le C^{'}_n \bigg( \frac{-M_1}{M_0}\bigg)^{n+1-|\mathbf{p}|}g^{n+k}A^{k}\varphi(g),
    \end{align*}
where $C_n,C^{'}_n > 0$ only depend on $n$ and $A > 1$ is an absolute constant.
\end{proposition}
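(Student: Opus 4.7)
The plan is to prove both bounds simultaneously by induction on $n$, writing $B := -M_1/M_0$ for brevity. For the base case $n=0$, Proposition~\ref{split_sum} combined with the hypothesis $\mu_c-\mu_g = o(g^{-2})$ gives $r(g,\mu_g)\to 0$, hence $|\alpha_{g,\mathbf{p}}(\mathbf{M})| \leq 2|\varphi(g,\mathbf{p})|$ for $g$ large enough. The $n=0$ instance of \eqref{bound_phi_with_phi_p0}, together with the elementary fact $3^{p+1}\leq (2p+3)!!$ for $p\geq 1$, yields $|\varphi(g,\mathbf{p})| \leq B^{-|\mathbf{p}|}(5g)^k \varphi(g)$, so the base case holds with (say) $A = 5$, $C_0 = 2$. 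A preliminary technical input for the induction: by \eqref{bound_Mk} one has $|M_k/M_0| \leq a^{k+1} B$ for every $k\geq 0$, and since $B \to +\infty$ as $\mu\to\mu_c$ by Proposition~\ref{asymptotics_M1_M0_R}, one may enlarge $g_0$ so that for $g\geq g_0$ all geometric sums $\sum_{p\geq 1} a^{p+1} B^{-p}$ are bounded by an absolute constant.

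Given the bound on $\alpha_{n,g,\mathbf{p}}$ with constant $C_n$, I would next derive the bound on $\mathcal{A}_{g,n,\mathbf{p},j}$ by inspecting its definition in both cases $p_j=1$ and $p_j\geq 2$. In each case $\mathcal{A}_{g,n,\mathbf{p},j}$ is a sum of $O(k)$ terms of three kinds: (i) a product $m_1\alpha_{n,g,\mathbf{p}}$ or a bare $\alpha_{n,g,(p_j-1,\mathbf{p}_{\widehat{\{j\}}})}$, bounded directly into the target form $C'_n B^{n+1-|\mathbf{p}|}g^{n+k}A^k \varphi(g)$; (ii) terms of the form $(2g-2+n)\alpha_{n,g,\mathbf{p}_{\widehat{\{j\}}}}$ or $\alpha_{n,g,\mathbf{p}_{\widehat{\{r\}}}}$ obtained by removing an index of value $1$, where the size drop increases the $B$-power by $1$ while the loss of a factor $gA$ is compensated by the $(2g{+}n)$ coefficient or by summing over $k\leq 3g-3+n$ values of $r$ and using $A\geq 1$; (iii) the single sum $\sum_{p\geq 2}m_p\alpha_{n,g,(p,\mathbf{p}_{\widehat{\{j\}}})}$, which after bounding $|m_p|\leq a^{p+1}B$ is controlled by the geometric summability from the preceding paragraph.

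The bound on $\alpha_{n+1,g,\mathbf{p}}$ then follows from \eqref{recursion_polynomial_with_derivative}. The contribution $\sum_{j=1}^k\mathcal{A}_{g,n,\mathbf{p},j}$ is at most $3C'_n B^{n+1-|\mathbf{p}|} g^{n+1+k}A^k\varphi(g)$ using $k\leq 3g$; the term $-m_1(2g-2+n)\alpha_{n,g,\mathbf{p}}$ contributes similarly; and for $\sum_p (m_{p+1}-m_1m_p)\alpha_{n,g,(p,\mathbf{p})}$, the inductive bound uses a multi-index of size $k+1$ and thus carries an extra factor of $A$, but this is absorbed into the updated constant because geometric summation in $p$ produces a compensating factor $B^{-1}$, keeping the target form. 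This closes the induction with $C_{n+1}$ depending only on $C_n$, $C'_n$ and absolute constants.

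The main obstacle is keeping $A$ \emph{absolute} (independent of $n$) while $C_n$ and $C'_n$ are allowed to grow with $n$. This requires careful bookkeeping of three competing scales — the combinatorial factor $A^k$, the polynomial factor $g^{n+k}$, which has to be absorbed via the degree bound $k \leq 3g-3+n$, and the $B^{n-|\mathbf{p}|}$ factor controlled by $B\to +\infty$ near $\mu_c$. The scheme works only because the base-case inequality $3^{p+1}\leq (2p+3)!!$ prevents $3^{|\mathbf{p}|}$ from leaking into $A$, because $g M_0 \to 0$ makes all geometric sums in $p$ tame, and because at no step is a $k$-dependent prefactor lost that cannot be absorbed into $g$.
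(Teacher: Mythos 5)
Your proposal follows the paper's proof essentially line for line: induction on $n$, base case via Proposition~\ref{split_sum} and inequality~\eqref{bound_phi_with_phi_p0}, then for the inductive step first bounding $\mathcal{A}_{g,n,\mathbf{p},j}$ by inspecting both branches of~\eqref{def_A}, then assembling the bound on $\alpha_{n+1,g,\mathbf{p}}$ from~\eqref{recursion_polynomial_with_derivative}, always using $k\lesssim g$ and the geometric smallness of $\sum_p |M_p/M_0| B^{-p}$ (driven by $gM_0\to 0$). The only cosmetic deviation is your tighter base-case constant $A=5$ obtained by observing $3^{p+1}\leq(2p+3)!!$, where the paper settles for $A=15$; either is a valid absolute constant.
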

\begin{remark1}
    Note that for $\mathbf{p}$ such that $|\mathbf{p}|> 3g-3+n+1$ we have $\alpha_{n,g,\mathbf{p}}(\mathbf{M}) = 0$ and $\mathcal{A}_{g,n,\mathbf{p},j}(\mathbf{M}) = 0$. Thus, in that the case the bound given by the Proposition is useless. However, it will be convenient to have a general formulation that holds for any $\mathbf{p}$.
\end{remark1}
\begin{proof}
    Let us prove the result by induction on $n$ in the case $g \to +\infty$.\\ 
\textbf{Initial case:}\\
For $n = 0$. By proposition~\ref{split_sum}, for $g$ large enough, for any $\mathbf{p}$ such that $|\mathbf{p}|\le 3g-3$, we have
\begin{align*}
    \big|\alpha_{0,g,\mathbf{p}}(\mathbf{M})\big| &\le 2\varphi(g,\mathbf{p})\\
    &\underset{\eqref{bound_phi_with_phi_p0}}{\le} 2\bigg(\frac{-M_1}{M_0}\bigg)^{-|\mathbf{p}|}\frac{(5g)^{k}3^{k+|\mathbf{p}|}}{\prod_{i=1}^{k}(2p_i+1)!!}\varphi(g)\\&\le 2\bigg(\frac{-M_1}{M_0}\bigg)^{-|\mathbf{p}|}g^{k}A^k\varphi(g)\cdot \underbrace{\frac{3^{|\mathbf{p}|}}{\prod_{i=1}^{k}(2p_i+1)!!}}_{\le 1}
    \\&\le C_0\bigg(\frac{-M_1}{M_0}\bigg)^{-|\mathbf{p}|}g^{k}A^k\varphi(g),
\end{align*}
where we have set $C_0 = 2$ and $A = 15$. The bound for $\big|\mathcal{A}_{g,0,\mathbf{p},j}(\mathbf{M})\big|$ will be given in the induction case.\\
\textbf{Induction case:}\\
Suppose that for $g$ large enough, we have
\begin{align*}
    \forall \mathbf{p}\in (\mathbb{N}^{*})^{k}, \hspace{0.3cm}\big|\alpha_{n,g,\mathbf{p}}(\mathbf{M})\big| \le C_n \bigg( \frac{-M_1}{M_0}\bigg)^{n-|\mathbf{p}|}g^{n+k}A^{k}\varphi(g).
\end{align*}
Let us first prove the bound for $\big|\mathcal{A}_{g,n,\mathbf{p},j}(\mathbf{M})\big|$. Since we need to prove the result for $g$ large enough, we suppose in the rest of the proof that $n \le g$. Fix $\mathbf{p}$ such that $|\mathbf{p}| \le 3g-3+n+1$ and $j \in \{1,\cdots,k\}$. Indeed in the case $|\mathbf{p}| > 3g-3+n+1$, we have $\mathcal{A}_{g,n,\mathbf{p},j}(\mathbf{M}) = 0$. First, if $p_j = 1$, by \eqref{def_A} and using the induction property, for $g$ large enough we have the bound 
\begin{align*}
\big|\mathcal{A}_{g,n,\mathbf{p},j}(\mathbf{M})\big| \le C_n A^k\varphi(g)\bigg[&2\bigg(\frac{-M_1}{M_0}\bigg)^{n+1-|\mathbf{p}|}g^{n+k}+2\somme{\substack{r=j+1\\p_r=1}}{k}{\bigg(\frac{-M_1}{M_0}\bigg)^{n-(|\mathbf{p}|-1)}g^{n+k-1}}\\
 &+ \somme{r=2}{3g-3+n}{\bigg|\frac{M_{r}}{M_0}\bigg|\bigg(\frac{-M_1}{M_0}\bigg)^{n-(|\mathbf{p}|-1+r)}g^{n+k}} +\somme{\substack{r=j+1\\p_r\ge 2}}{k}{\bigg(\frac{-M_1}{M_0}\bigg)^{n-(|\mathbf{p}|-1)}g^{n+k-1}}\\&+(2g-2+n)\bigg(\frac{-M_1}{M_0}\bigg)^{n-(|\mathbf{p}|-1)}g^{n+k-1}\bigg].
\end{align*}
Using \eqref{bound_Mk} and the fact that $M_0 \underset{g \to +\infty}{\rightarrow}0$, for $g$ large enough we have 
\begin{align*}
    \displaystyle \somme{r=2}{3g-3+n}{\bigg|\frac{M_{r}}{M_0}\bigg|\bigg(\frac{-M_1}{M_0}\bigg)^{n-(|\mathbf{p}|-1+r)}g^{n+k}} \le \bigg(\frac{-M_1}{M_0}\bigg)^{n+1-|\mathbf{p}|}g^{n+k}.
\end{align*}
We can also write 
\begin{align*}
2\somme{\substack{r=j+1\\p_r=1}}{k}{\bigg(\frac{-M_1}{M_0}\bigg)^{n-(|\mathbf{p}|-1)}g^{n+k-1}} + &\somme{r=2}{3g-3+n}{\somme{\substack{r=j+1\\p_r=p}}{k}{\bigg(\frac{-M_1}{M_0}\bigg)^{n-(|\mathbf{p}|-1)}g^{n+k-1}}} \\
&\le 2k\bigg(\frac{-M_1}{M_0}\bigg)^{n+1-|\mathbf{p}|}g^{n+k-1}\le 8\bigg(\frac{-M_1}{M_0}\bigg)^{n+1-|\mathbf{p}|}g^{n+k},
\end{align*}
where the first inequality follows from the fact that $\displaystyle \somme{\substack{r=j+1\\p_r=1}}{k}1+\somme{r=2}{3g-3+n}{\somme{\substack{r=j+1\\p_r=p}}{k}1} = k$ and the second from the fact that $k \le 3g-3+n \le 4g$. Taking $C^{'}_{n} = 14C_n$ and combining last equations together we obtain the desired bound $\big|\mathcal{A}_{g,n,\mathbf{p},j}(\mathbf{M})\big| \le C^{'}_{n} \bigg(\displaystyle \frac{-M_1}{M_0}\bigg)^{n+1-|\mathbf{p}|}g^{n+k}A^{k}\varphi(g)$. Now if $p_j \ge 2$, by \eqref{def_A} and the induction case we can write
\begin{align*}    \big|\mathcal{A}_{g,n,\mathbf{p},j}(\mathbf{M})\big| \le C_n A^k\varphi(g)\bigg[2\bigg(\frac{-M_1}{M_0}\bigg)^{n+1-|\mathbf{p}|}g^{n+k} + \somme{\substack{r=j+1\\p_r=1}}{k}{\bigg(\frac{-M_1}{M_0}\bigg)^{n+1-|\mathbf{p}|}g^{n+k-1}}\bigg].
\end{align*}
Using the fact that the sum on the right-hand side of the inequality has at most $4g$ terms, we conclude
\begin{align*}
     \forall\mathbf{p}\in (\mathbb{N}^{*})^{k}, \hspace{0.2cm} \forall j \in \{1,\cdots,k\}, \hspace{0.3cm}\big|\mathcal{A}_{g,n,\mathbf{p},j}(\mathbf{M})\big| \le C^{'}_{n} \bigg(\frac{-M_1}{M_0}\bigg)^{n+1-|\mathbf{p}|}g^{n+k}A^k\varphi(g).
\end{align*}
Now let us bound $\big|\alpha_{n+1,g,\mathbf{p}}(\mathbf{M})\big|$. Fix $\mathbf{p}$ such that $ |\mathbf{p}| \le 3g-3+n+1$. Recall from \eqref{recursion_polynomial_with_derivative} that
\begin{align*}
    \alpha_{n+1,g,\mathbf{p}} &= \somme{j=1}{k}{\mathcal{A}_{g,n,\mathbf{p},j} }+\somme{p=1}{3g-3+n}{\bigg(m_{p+1}-m_1 m_p}\bigg)  \alpha_{n,g,(p,\mathbf{p})}  -m_1 (2g-2+n)  \alpha_{n,g,\mathbf{p}} .
\end{align*}
First, we have the bound
\begin{align}\label{bound_term_left}
    \bigg|\somme{j=1}{k}{\mathcal{A}_{g,n,\mathbf{p},j} }(\mathbf{M})\bigg| \le  4C^{'}_{n} \bigg(\frac{-M_1}{M_0}\bigg)^{n+1-|\mathbf{p}|}g^{n+1+k}A^k\varphi(g),
\end{align}
where we used that $k \le 4g$.
Using the induction, \eqref{bound_Mk}, and the fact that $M_0 \underset{g\to +\infty}{\rightarrow}0$, we also write 
\begin{align}\label{bound_term_middle}
    \bigg|\somme{p=1}{3g-3+n}{\bigg(\frac{-M_{p+1}}{M_0}-\frac{M_1M_p}{M_0^2}}\bigg)  \alpha_{n,g,(p,\mathbf{p})} (\mathbf{M})\bigg| &\le a_1 \somme{p=1}{3g-3+n}{C^{}_{n} \bigg(\frac{-M_1}{M_0}\bigg)^{n+2-|\mathbf{p}|-p}g^{n+1+k}A^k\varphi(g)}\\
    &\le a_2 C^{}_{n} \bigg(\frac{-M_1}{M_0}\bigg)^{n+1-|\mathbf{p}|}g^{n+1+k}A^k\varphi(g),\nonumber
\end{align}
where $a_1,a_2 > 0$ are absolute constants.\\
We bound
\begin{align}\label{bound_right_term}
  \bigg|  \frac{-M_1}{M_0}(2g-2+n) \alpha_{n,g,\mathbf{p}} (\mathbf{M})\bigg| \le 3C_n\bigg(\frac{-M_1}{M_0}\bigg)^{n+1-|\mathbf{p}|}g^{n+1+k}A^k\varphi(g).
\end{align}
Combining \eqref{bound_term_left}, \eqref{bound_term_middle} and \eqref{bound_right_term} we have 
\begin{align*}
     \displaystyle \forall \mathbf{p}\in (\mathbb{N}^{*})^{k},\hspace{0.3cm}\big| \alpha_{n+1,g,\mathbf{p}} (\mathbf{M})\big| \le C_{n+1}\bigg(\frac{-M_1}{M_0}\bigg)^{n+1-|\mathbf{p}|}g^{n+1+k}A^k\varphi(g),
\end{align*}
where $C_{n+1} := a_2C_n+3C_n+4C^{'}_n$. This concludes the proof.
\end{proof}

\noindent Now, we give an asymptotic for $ \alpha_{n,g,\mathbf{p}} (\mathbf{M})$ when $n$ and $\mathbf{p}$ are fixed.

\begin{proposition}\label{Estimate_derivative_Pg0}
    For any $\mu_g \underset{g \to +\infty}{\rightarrow}\mu_c$ such that $\mu_c - \mu_g = o(g^{-2})$ and any $p_1,\cdots,p_k \ge 1$, $n \ge 0$, we have 
    \begin{align*}
        \alpha_{n,g,\mathbf{p}} (\mathbf{M}) \underset{g \to +\infty}{\sim} \varphi(n,g,\mathbf{p}).
    \end{align*}
\end{proposition}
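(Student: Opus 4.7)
The plan is to argue by induction on $n$. The base case $n=0$ is precisely Proposition~\ref{split_sum}: under the assumption $\mu_c - \mu_g = o(g^{-2})$, one has $\alpha_{0,g,\mathbf{p}}(\mathbf{M}) \sim \varphi(g,\mathbf{p}) = \varphi(0,g,\mathbf{p})$ for any fixed $\mathbf{p}$.

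For the inductive step, assume the result at level $n$ for every fixed multi-index and fix $\mathbf{p} = (p_1,\ldots,p_k)$. Apply the recursion \eqref{recursion_polynomial_with_derivative}, which decomposes $\alpha_{n+1,g,\mathbf{p}}$ into the derivative corrections $\sum_{j=1}^k \mathcal{A}_{g,n,\mathbf{p},j}$, the main sum $\sum_{p=1}^{3g-3+n}(m_{p+1}-m_1 m_p)\alpha_{n,g,(p,\mathbf{p})}$, and the term $-m_1(2g-2+n)\alpha_{n,g,\mathbf{p}}$. The whole of $\varphi(n+1,g,\mathbf{p})$ arises from only two of these pieces: the $p=1$ summand of the main sum together with the last term. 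Indeed, using $m_2 - m_1^2 \sim -m_1^2$ as $\mu \to \mu_c$ and combining \eqref{few_equations_phi} with the induction hypothesis, a short computation gives
\begin{align*}
(m_2 - m_1^2)\,\alpha_{n,g,(1,\mathbf{p})}(\mathbf{M}) &\sim 3g(-m_1)\,\varphi(n,g,\mathbf{p}),\\
-m_1(2g-2+n)\,\alpha_{n,g,\mathbf{p}}(\mathbf{M}) &\sim 2g(-m_1)\,\varphi(n,g,\mathbf{p}),
\end{align*}
whose sum is exactly $5g(-m_1)\varphi(n,g,\mathbf{p}) = \varphi(n+1,g,\mathbf{p})$, the ratio $3{:}2$ reflecting the respective factors $3g$ and $2g$ from \eqref{few_equations_phi} and from $(2g-2+n)\sim 2g$.

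It remains to show that every other contribution is $o(\varphi(n+1,g,\mathbf{p}))$. For the derivative terms, Proposition~\ref{uniform_bound_derivative_and_A} bounds each $|\mathcal{A}_{g,n,\mathbf{p},j}(\mathbf{M})|$ by $C'_n(-m_1)^{n+1-|\mathbf{p}|}g^{n+k}A^k\varphi(g)$, and since there are only $k$ such terms the sum is smaller than $\varphi(n+1,g,\mathbf{p}) \asymp g^{n+k+1}(-m_1)^{n+1-|\mathbf{p}|}\varphi(g)$ by a factor $O(k/g)$. The main obstacle is the tail $\sum_{p=2}^{3g-3+n}(m_{p+1}-m_1 m_p)\alpha_{n,g,(p,\mathbf{p})}$, since $p$ may grow with $g$ and a term-by-term application of the induction hypothesis is not available. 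I handle it using only uniform estimates: $|m_{p+1}-m_1 m_p| \le 2a^{p+1}/M_0^2$ from \eqref{bound_Mk} and $|\alpha_{n,g,(p,\mathbf{p})}(\mathbf{M})| \le C_n(-m_1)^{n-p-|\mathbf{p}|}g^{n+k+1}A^{k+1}\varphi(g)$ from Proposition~\ref{uniform_bound_derivative_and_A}. Together with $(-m_1)^{-1} \le a M_0$, the tail reduces to the convergent geometric series $\sum_{p\ge 2}(a^2 M_0)^p$, which is $O(M_0^2)$ by Proposition~\ref{asymptotics_M1_M0_R}; this cancels the $M_0^{-2}$ and comparison with the asymptotic size of $\varphi(n+1,g,\mathbf{p})$ leaves a ratio of order $O(M_0) \to 0$. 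Combining these estimates with the leading-order computation closes the induction.
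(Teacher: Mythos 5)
Your proof is correct and follows essentially the same route as the paper's: induction on $n$ with the base case from Proposition~\ref{split_sum}, the recursion \eqref{recursion_polynomial_with_derivative} together with the uniform bounds of Proposition~\ref{uniform_bound_derivative_and_A} for the inductive step, and the observation that the $p=1$ summand and the final term jointly produce the leading $5g(-m_1)\varphi(n,g,\mathbf{p})=\varphi(n+1,g,\mathbf{p})$, with the derivative corrections and the $p\ge 2$ tail shown to be negligible (you spell out the geometric-series bound on the tail explicitly, which the paper leaves to the reader, and you keep $m_2-m_1^2$ together rather than split off $m_2\alpha_{n,g,(1,\mathbf{p})}$ as a separate error piece, but these are cosmetic). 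One small slip: by \eqref{asymptotic_phi_n_p_fixed} the asymptotic size of $\varphi(n+1,g,\mathbf{p})$ is of order $g^{n+2k+1}(-m_1)^{n+1-|\mathbf{p}|}\varphi(g)$, not $g^{n+k+1}(-m_1)^{n+1-|\mathbf{p}|}\varphi(g)$; since the true quantity is larger than what you wrote, this only strengthens your $o(\cdot)$ estimates and does not affect the conclusion.
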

\begin{proof}
We do the proof by induction on $n$.\\
\textbf{Initial case:}\\
This is a direct application of Proposition~\ref{split_sum}.\\
\textbf{Induction case:}\\
We define
\begin{align}\label{def_three_C}
    &\nonumber \mathcal{C}^1_{n,g,\mathbf{p}} :=\somme{j=1}{k}{\mathcal{A}_{g,n,\mathbf{p},j} },\\
    &\mathcal{C}^2_{n,g,\mathbf{p}} :=\somme{p=2}{3g-3+n}{\bigg(m_{p+1}-m_1 m_p}\bigg) \alpha_{n,g,(p,\mathbf{p})} +m_2\alpha_{n,g,(1,\mathbf{p})},\\ & \mathcal{C}^3_{n,g,\mathbf{p}} :=-m_1^2\alpha_{n,g,(1,\mathbf{p})} -m_1 (2g-2+n) \alpha_{n,g,\mathbf{p}}.\nonumber 
    &
\end{align}
Thus, \eqref{recursion_polynomial_with_derivative} can be rewritten
\begin{align*}
    \alpha_{n+1,g,\mathbf{p}} &= \mathcal{C}^1_{n,g,\mathbf{p}} +\mathcal{C}^2_{n,g,\mathbf{p}} +\mathcal{C}^3_{n,g,\mathbf{p}} .
\end{align*}
We recall that \eqref{few_equations_phi} and \eqref{def_phi} give
\begin{align*}
    &\varphi(n,g,(1,\mathbf{p})) \underset{g\to +\infty}{\sim} -\bigg(\frac{-M_1}{M_0}\bigg)^{-1}(3g)\varphi(n,g,\mathbf{p}),\\
    & \varphi(n+1,g,\mathbf{p})=\bigg(\frac{-M_1}{M_0}\bigg)(5g)\varphi(n,g,\mathbf{p}).
\end{align*}
Using these equations and by induction we can write
\begin{align*}
    -\bigg(\frac{-M_1}{M_0}\bigg)^2&\alpha_{n,g,(1,\mathbf{p})}(\mathbf{M}) \underset{g \to +\infty}{\sim} -\bigg(\frac{-M_1}{M_0}\bigg)^2 \varphi(n,g,(1,\mathbf{p})) \sim \bigg(\frac{-M_1}{M_0}\bigg)(3g)\varphi(n,g,\mathbf{p}),\\
    &  \bigg(\frac{-M_1}{M_0}\bigg) (2g-2+n) \alpha_{n,g,\mathbf{p}}(\mathbf{M}) \sim \bigg(\frac{-M_1}{M_0}\bigg)(2g)\varphi(n,g,\mathbf{p}).
\end{align*}
Thus, we have 
\begin{align}\label{asymptotic_dominant_term}
    \mathcal{C}^3_{n,g,\mathbf{p}}(\mathbf{M}) \sim -\bigg(\frac{-M_1}{M_0}\bigg)(5g)\varphi(n,g,\mathbf{p}) = \varphi(n+1,g,\mathbf{p}).
\end{align}
To conclude we only have to show that $\mathcal{C}^1_{n,g,\mathbf{p}}(\mathbf{M})  = o\bigg(\varphi(n+1,g,\mathbf{p})\bigg)$ and $\mathcal{C}^2_{n,g,\mathbf{p}}(\mathbf{M})  = o\bigg(\varphi(n+1,g,\mathbf{p})\bigg)$.  For $\mathcal{C}^1_{n,g,\mathbf{p}}(\mathbf{M})$, we have the bound
\begin{align*}
    |\mathcal{C}^1_{n,g,\mathbf{p}}(\mathbf{M})| \le k \sup_{\substack{ 1 \le j \le k}}\bigg|\mathcal{A}_{g,n,\mathbf{p},j}(\mathbf{M})\bigg|.
\end{align*}
Using Proposition~\ref{uniform_bound_derivative_and_A} we find
\begin{align*}
    |\mathcal{C}^1_{n,g,\mathbf{p}}(\mathbf{M})| \le C^{'}_n k\bigg(\frac{-M_1}{M_0}\bigg)^{n+1-|\mathbf{p}|}g^{n+k}A^k\varphi(g) \underset{\eqref{asymptotic_phi_n_p_fixed}}{=} o\bigg(\varphi(n+1,g,\mathbf{p})\bigg),
\end{align*}
where the last equality uses the fact that here $n$ and $\mathbf{p}$ are fixed. We let the reader verify that using the first bound of Proposition~\ref{uniform_bound_derivative_and_A} and similar arguments we have $\mathcal{C}^2_{n,g,\mathbf{p}}(\mathbf{M})= o\bigg(\varphi(n+1,g,\mathbf{p})\bigg)$. This concludes the proof.
\end{proof}
We recall that we have
\begin{align*}    \alpha_{n,g,\mathbf{p},\mathbf{q}} = [L_1^{2q_1}\cdots L_n^{2q_n}]\frac{\partial \mathcal{P}_{g,n}}{\partial m_{\mathbf{p}}}\bigg(\sqrt{-3m_1}\mathbf{L},\mathbf{m}\bigg).
\end{align*}
The next proposition gives an asymptotic for the coefficients $\alpha_{n,g,\mathbf{p},\mathbf{q}}(\mathbf{M})$. 
\begin{proposition}\label{asymptotics_of_coefficients}
    For any $\mu_g \underset{g \to +\infty}{\rightarrow}\mu_c$ such that $\mu_c - \mu_g = o(g^{-2})$ and any $p_1,\cdots,p_k \ge 1$, $n\ge0$ and $q_1,\cdots,q_n \ge 0$, we have
    \begin{align}\label{limit_coefficient}        \alpha_{n,g,\mathbf{p},\mathbf{q}}(\mathbf{M})\underset{g \to +\infty}{\sim} \varphi(n,g,\mathbf{p}) \prod_{i=1}^{n}\frac{1}{(2q_i+1)!}.
    \end{align}
Furthermore, for $g$ large enough we have
\begin{align}\label{bound_coefficient}
        \forall \mathbf{p}\in (\mathbb{N}^{*})^{k},\hspace{0.2cm}\forall \mathbf{q}\in \mathbb{N}^n,\hspace{0.3cm}\big|\alpha_{n,g,\mathbf{p},\mathbf{q}}(\mathbf{M}) \big|\le C_n  \bigg(-\frac{M_1}{M_0}\bigg)^{n-|\mathbf{p}|}g^{n+k}A^k\varphi(g)\prod_{i=1}^{n}\frac{1}{q_i!},
    \end{align}
    where $C_n > 0$ only depends on $n$ and $A > 0$ is an absolute constant. 
\end{proposition}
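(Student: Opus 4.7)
The plan is to proceed by induction on $n \ge 0$, using the recursion \eqref{recursion_relation} to pass from level $n$ to level $n+1$. The base case $n=0$ is immediate: $\mathbf{q}$ is the empty vector, so $\alpha_{0,g,\mathbf{p},\mathbf{q}}=\alpha_{0,g,\mathbf{p}}$ and the empty product equals $1$; the asymptotic then reduces to Proposition~\ref{Estimate_derivative_Pg0} and the bound to Proposition~\ref{uniform_bound_derivative_and_A}.

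For the inductive step, I would first rewrite the recursion after the change of variable $L_i \mapsto \sqrt{-m_1/3}\,L_i$ in $\mathcal{P}_{g,n+1}$ and $x \mapsto \sqrt{-m_1/3}\,y$ in the integrals, which produces
\begin{align*}
\mathcal{T}_{n+1,g,\emptyset}(\mathbf{L},\mathbf{m})
&= \sum_{p=1}^{3g-4+n}\Bigl(m_{p+1} - \tfrac{(-m_1)^{p+1}L_1^{2p+2}}{6^{p+1}(p+1)!} - m_1 m_p - \tfrac{m_1 m_p}{6}L_1^2\Bigr)\mathcal{T}_{n,g,p}(\mathbf{L}_{\widehat{\{1\}}},\mathbf{m}) \\
&\quad + (2g-2+n)\bigl(-m_1 - \tfrac{m_1 L_1^2}{6}\bigr)\mathcal{T}_{n,g,\emptyset}(\mathbf{L}_{\widehat{\{1\}}},\mathbf{m}) - \tfrac{m_1}{3}\sum_{i=2}^{n+1}\int_0^{L_i}\! y\,\mathcal{T}_{n,g,\emptyset}(y,\mathbf{L}_{\widehat{\{1,i\}}},\mathbf{m})\,dy.
\end{align*}
Then I would apply $\partial^{|\mathbf{p}|}/\partial m_{\mathbf{p}}$ via the Leibniz rule and extract the coefficient of $L_1^{2q_1}L_2^{2q_2}\cdots L_{n+1}^{2q_{n+1}}$, obtaining a sum of terms involving $\alpha_{n,g,\mathbf{p}',\mathbf{q}'}(\mathbf{m})$ for various $\mathbf{p}'$ and with $\mathbf{q}'=(q_2,\ldots,q_{n+1})$.

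The coefficient extraction splits into three cases according to $q_1$. For $q_1 \ge 2$, the only source of $L_1^{2q_1}$ is the explicit term $-(-m_1)^{q_1}L_1^{2q_1}/(6^{q_1}q_1!)\,\mathcal{T}_{n,g,q_1-1}$; the dominant Leibniz contribution (no derivative falls on the $(-m_1)^{q_1}$ prefactor) combined with the induction hypothesis for $\alpha_{n,g,(q_1-1,\mathbf{p}),\mathbf{q}'}$, the asymptotic \eqref{few_equations_phi2} that converts $\varphi(n,g,(q_1-1,\mathbf{p}))$ into $\varphi(n+1,g,\mathbf{p})$, and the elementary identity $(2q_1+1)!=2^{q_1}q_1!(2q_1+1)!!$, assemble to give exactly $\varphi(n+1,g,\mathbf{p})/(2q_1+1)!$ times $\prod_{i\ge 2}1/(2q_i+1)!$. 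The case $q_1=1$ is handled by rewriting $-m_1\sum_p m_p\,\alpha_{n,g,p,\mathbf{q}'} - (2g-2+n)m_1\,\alpha_{n,g,\emptyset,\mathbf{q}'} = \alpha_{n+1,g,\emptyset,\mathbf{q}'} - \sum_p m_{p+1}\,\alpha_{n,g,p,\mathbf{q}'}$ via the unscaled recursion at $\mathbf{L}_{\widehat{\{1\}}}=\mathbf{0}$, then applying the induction hypothesis to $\alpha_{n+1,g,\emptyset,\mathbf{q}'}$ while showing that the residual $p$-sum is subdominant. Finally, $q_1=0$ uses the $L_1$-independent terms together with the integral contributions, which through $\int_0^{L_i} y^{2q_i-1}\,dy = L_i^{2q_i}/(2q_i)$ produce factors $-m_1/(6q_i)$ that combine inductively into the advertised $\prod (2q_i+1)!^{-1}$ pattern.

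The uniform bound \eqref{bound_coefficient} is established in parallel by the same inductive scheme but invoking the bounds of Proposition~\ref{uniform_bound_derivative_and_A} in place of the sharp asymptotics; the $\prod 1/q_i!$ prefactor arises from the $(p+1)!$ denominator in $L_1^{2p+2}/(2^{p+1}(p+1)!)$ and from the $1/(2q_i)$ produced by the integral terms. The main obstacle I foresee is controlling the $p$-sums that arise both from the recursion and from the Leibniz expansion of $\partial^{|\mathbf{p}|}/\partial m_{\mathbf{p}}$: each individual term is easy to bound, but one must verify that the total $p$-sum is genuinely subdominant compared to $\varphi(n+1,g,\mathbf{p})$. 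This requires combining the bound $|M_k|\le a^k$ from \eqref{bound_Mk} with $gM_0\to 0$ (which follows from $\mu_c-\mu_g=o(g^{-2})$ via Proposition~\ref{asymptotics_M1_M0_R}), and it is precisely here that the uniform bound of Proposition~\ref{uniform_bound_derivative_and_A} (valid for all $\mathbf{p}$, not just fixed $\mathbf{p}$) becomes essential, exactly as in the proof of Proposition~\ref{Estimate_derivative_Pg0}.
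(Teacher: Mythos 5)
Your plan — induction on $n$ via Theorem~\ref{polynomial_recursion}, case split on $q_1$, base case reduced to Propositions~\ref{uniform_bound_derivative_and_A} and~\ref{Estimate_derivative_Pg0} — is the same overall strategy as the paper, but several details as written are problematic.

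First, the order of operations. By definition \eqref{def_rescaled_columes}, $\alpha_{n,g,\mathbf{p},\mathbf{q}}$ is obtained by applying $\partial_{m_{\mathbf{p}}}$ to $\mathcal{P}_{g,n}$ \emph{first}, and only afterwards substituting $\mathbf{L}\mapsto \sqrt{-m_1/3}\,\mathbf{L}$. You propose instead to rewrite the recursion after the rescaling and then apply $\partial_{m_{\mathbf{p}}}$ ``via the Leibniz rule,'' which would compute $[L^{2\mathbf{q}}]\,\partial_{m_{\mathbf{p}}}\!\left(\mathcal{P}_{g,n+1}(\sqrt{-m_1/3}\,\mathbf{L},\mathbf{m})\right)$, not $\alpha_{n+1,g,\mathbf{p},\mathbf{q}}$. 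Whenever some $p_i=1$, the chain rule on $\sqrt{-m_1/3}$ produces additional terms involving $L_j\partial_{L_j}\mathcal{P}_{g,n+1}$ that are of the \emph{same} order as the main term, so they cannot be brushed off as subleading — you would simply be computing a different quantity. In the paper's correct order, the $q_1\ge 2$ reduction is the exact single-term identity
\begin{align*}
\alpha_{n+1,g,\mathbf{p},\mathbf{q}}(\mathbf{M})=-\frac{1}{2^{q_1}q_1!}\left(-\frac{M_1}{3M_0}\right)^{q_1}\alpha_{n,g,(q_1-1,\mathbf{p}),\mathbf{q}_{\ge 2}}(\mathbf{M}),
\end{align*}
with no residual Leibniz terms on the prefactor at all; your ``dominant Leibniz contribution'' framing is a red herring caused by the wrong order.

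Second, you do not invoke the WLOG symmetry ordering $q_1\ge\cdots\ge q_{n+1}$ (available because $\mathcal{P}_{g,n}$ is symmetric in $\mathbf{L}$), which the paper uses to make $q_1=0$ trivial (then $\mathbf{q}=\mathbf{0}$ and $\alpha_{n+1,g,\mathbf{p},\mathbf{0}}=\alpha_{n+1,g,\mathbf{p}}$ is covered by Propositions~\ref{uniform_bound_derivative_and_A} and~\ref{Estimate_derivative_Pg0}). Because of this, the integral terms $\int_0^{L_i} x\,\mathcal{P}_{g,n}\,\mathrm{d}x$ never enter the paper's argument: they are $L_1$-independent, so they do not contribute to $[L_1^{2q_1}]$ when $q_1\ge 1$, and when $q_1=0$ all other $q_i$ vanish too. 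Your $q_1=0$ discussion, which tries to extract $\prod(2q_i+1)!^{-1}$ factors from the integrals, is fighting a battle the paper never has to fight.

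Third, the $q_1=1$ rewriting
\begin{align*}
-m_1\sum_p m_p\,\alpha_{n,g,p,\mathbf{q}'} - (2g-2+n)m_1\,\alpha_{n,g,\emptyset,\mathbf{q}'} = \alpha_{n+1,g,\emptyset,\mathbf{q}'} - \sum_p m_{p+1}\,\alpha_{n,g,p,\mathbf{q}'}
\end{align*}
has a type mismatch: $\mathbf{q}'=\mathbf{q}_{\ge 2}$ has $n$ entries, but $\alpha_{n+1,g,\emptyset,\cdot}$ requires an $\mathbf{q}$-vector of length $n+1$. Even after padding with a zero, this putative identity would pick up the omitted integral contributions whenever $\mathbf{q}'\neq\mathbf{0}$, so it is not available as stated. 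The paper avoids all of this: it derives the exact formula \eqref{the_term_n+1} by extracting $[L_1^2]$ directly (picking up the Leibniz terms from $\partial_{m_{p_i}}$ hitting the explicit $m_p$, which combine with $(2g-2+n)$ to give $(2g-2+n+k)$), applies the inductive hypothesis to the $p=1$ and constant pieces, and uses the uniform bound of Proposition~\ref{uniform_bound_derivative_and_A} to kill the $p\ge 2$ sum. You should replace your rewriting with this direct computation.
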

\begin{proof}
    We prove both statements by induction on $n \in \mathbb{N}$.\\
\textbf{Initial case:}\\
The case $n = 0$ is a direct consequence of Proposition~\ref{uniform_bound_derivative_and_A} and Proposition~\ref{Estimate_derivative_Pg0}.\\
\textbf{Induction case:}\\
Suppose that the result is true for $n\ge 0$. Fix $q_1,\cdots, q_n, q_{n+1} \ge 0$ and $p_1,\dots,p_k \ge 1$. We distinguish different cases according to $q_1$. Since the polynomial $\mathcal{P}_{g,n}(\mathbf{L},\mathbf{m})$ is symmetric in $\mathbf{L}$, we can assume without loss of generality that $q_1 \ge \cdots \ge q_{n+1}$.\\
\fbox{$\mathbf{q_1 \ge 2:}$}\\
\noindent By Theorem~\ref{polynomial_recursion}, we write 
\begin{align}\label{term_q1_greater_2}
     & \alpha_{n+1,g,\mathbf{p},\mathbf{q}}(\mathbf{M})=-\frac{1}{2^{q_1}q_1!}\bigg(-\frac{M_1}{3M_0}\bigg)^{q_1} \alpha_{n,g,(q_1-1,\mathbf{p}),\mathbf{q}_{\ge 2}}(\mathbf{M}).
\end{align}
Note that this equality still makes sense when $q_1 \ge 3g-3+n+2$ since the two terms in the quality are equal to $0$. Working with $q_1,\cdots,q_{n+1}$ and $p_1,\cdots,p_{k}$ fixed, applying the induction result for $n$ we obtain that the last display is equivalent as $g\to \infty$ to
\begin{align}\label{intermediate_term}
    &-\frac{1}{2^{q_1}q_1!}\bigg(-\frac{M_1}{3M_0}\bigg)^{q_1} \varphi\big(n,g,(q_1-1,\mathbf{p})\big)\prod_{i=2}^{n+1}\frac{1}{(2q_i+1)!}.
\end{align}
We recall that \eqref{few_equations_phi2} gives
\begin{align*}
    -\frac{1}{2^{q_1}q_1!}\bigg(-\frac{M_1}{3M_0}\bigg)^{q_1} \varphi\bigg(n,g,(q_1-1,\mathbf{p})\bigg) \underset{g \to +\infty}{\sim} \frac{1}{(2q_1+1)!}\varphi(n+1,g,\mathbf{p}).
\end{align*}
This concludes the first part of the proposition in the case $q_1\ge 2$. To bound $\big|\alpha_{n+1,g,\mathbf{p},\mathbf{q}}(\mathbf{M}) \big|$,
we simply use the induction case applied to the right-hand side of \eqref{term_q1_greater_2}, making sure to take $C_{n+1} \ge A C_n$.\\
\fbox{$\mathbf{q_1 = 1:}$}\\
 Using Theorem~\ref{polynomial_recursion}, we write $\alpha_{n+1,g,\mathbf{p},\mathbf{q}}(\mathbf{M})$ as
\begin{align}\label{the_term_n+1}
-\frac{M_1}{6M_0}\bigg[&\somme{p=1}{3g-3+n}{\frac{M_p}{M_0}}\alpha_{n,g,(p,\mathbf{p}),\mathbf{q}_{\ge 2}}(\mathbf{M}) +\somme{\substack{i=1}}{k}{\alpha_{n,g,(p_i,\mathbf{p}_{\widehat{\{i\}}}),\mathbf{q}_{\ge 2}}}(\mathbf{M})  + (2g-2+n)\alpha_{n,g,\mathbf{p},\mathbf{q}_{\ge 2}}(\mathbf{M})\bigg].
\end{align}
Observing that $\alpha_{n,g,(p_i,\mathbf{p}_{\widehat{\{i\}}}),\mathbf{q}_{\ge 2}} = \alpha_{n,g,\mathbf{p},\mathbf{q}_{\ge 2}} $, this becomes
\begin{align}\label{the_term_n+1}
\alpha_{n+1,g,\mathbf{p},\mathbf{q}}(\mathbf{M})=-\frac{M_1}{6M_0}\bigg[&\somme{p=1}{3g-3+n}{\frac{M_p}{M_0}}\alpha_{n,g,(p,\mathbf{p}),\mathbf{q}_{\ge 2}}(\mathbf{M})   + (2g-2+n+k)\alpha_{n,g,\mathbf{p},\mathbf{q}_{\ge 2}}(\mathbf{M})\bigg].
\end{align}
Using the same arguments as in the case $q_1 \ge 2$, we find
\begin{align}\label{first_term}
    &-\frac{M_1}{6M_0}\bigg[\frac{M_1}{M_0}\alpha_{n,g,(1,\mathbf{p}),\mathbf{q}_{\ge 2}}(\mathbf{M})   + (2g-2+n+k)\alpha_{n,g,\mathbf{p},\mathbf{q}_{\ge 2}}(\mathbf{M})\bigg]\\ &\nonumber \sim \frac{1}{6}\varphi(n+1,g,\mathbf{p})\prod_{i=2}^{n+1}\frac{1}{(2q_i+1)!}.
\end{align}
For $p \ge 2$, combining the induction for $n$ and \eqref{bound_Mk}, we obtain 
\begin{align}\label{only_first_term_matters}
    &\bigg|-\frac{M_1M_p}{6M_0^2}\alpha_{n,g,(p,\mathbf{p}),\mathbf{q}_{\ge 2}}(\mathbf{M})\bigg| \le  C_n  \bigg(-\frac{M_1}{M_0}\bigg)^{n-|\mathbf{p}|-p+2}g^{n+1+k}A^k\varphi(g)\prod_{i=1}^{n+1}\frac{1}{q_i!},
\end{align}
where $C_n > 0$ only depends on $n$ and $A > 0$ is an absolute constant. Using \eqref{asymptotic_phi_n_p_fixed}, as $g \to +\infty$ we have $\bigg(\displaystyle -\frac{M_1}{M_0}\bigg)^{n-|\mathbf{p}|}g^{n+1+k}\varphi(g) = o\bigg(\varphi(n+1,g,\mathbf{p})\bigg)$. Then using \eqref{bound_Mk} and the fact that $M_0 \underset{g \to +\infty}{\rightarrow}0$, we deduce that for $g$ large enough we have
\begin{align}\label{bound_for_p_greater_2}
    \bigg|\somme{p=2}{3g-3+n}{}-\frac{M_1M_p}{6M_0^2}\alpha_{n,g,(p,\mathbf{p}),\mathbf{q}_{\ge 2}}(\mathbf{M})\bigg| &\le  C^{'}_n  \cdot \bigg(-\frac{M_1}{M_0}\bigg)^{n-|\mathbf{p}|}g^{n+1+k}A^k\varphi(g) \prod_{i=1}^{n+1}\frac{1}{q_i!}\\&= o\bigg(\varphi(n+1,g,\mathbf{p})\bigg).\nonumber
\end{align}
Combining \eqref{the_term_n+1}, \eqref{first_term} and \eqref{bound_for_p_greater_2} we deduce \eqref{limit_coefficient} for $n+1$ using the induction result for $n$. The bound \eqref{bound_coefficient} is obtained the same way by applying the induction to \eqref{the_term_n+1}, using \eqref{bound_for_p_greater_2}.\\
\fbox{$\mathbf{q_1 =0:}$}\\
If $q_1 = 0$, then $q_1 = \cdots = q_{n+1} = 0$. In that case we have 
\begin{align*}
   \displaystyle \alpha_{n+1,g,\mathbf{p},\mathbf{q}}(\mathbf{M}) = \alpha_{n+1,g,\mathbf{p}}(\mathbf{M}).
\end{align*}
We conclude using Proposition~\ref{uniform_bound_derivative_and_A} and Proposition~\ref{Estimate_derivative_Pg0}.
\end{proof}

Finally, we conclude the proof of Proposition~\ref{estimate_boundary}.
\begin{proof}[Proof of Proposition~\ref{estimate_boundary}]\label{proof_estimate_boundary}
The proof of the asymptotics is a direct consequence of Theorem~\ref{polynomial_recursion} and Proposition~\ref{asymptotics_of_coefficients}. Indeed, we can write $T_{g,n}\bigg(\bigg(-\frac{M_1}{3M_0}\bigg)^{\frac{1}{2}}\mathbf{L}\bigg) = \displaystyle \frac{1}{M_0^{2g-2+n}}\mathcal{P}_{g,n}\bigg(\bigg(-\frac{M_1}{3M_0}\bigg)^{\frac{1}{2}}\mathbf{L},\mathbf{M}\bigg)$. Then we can write
\begin{align*}
    \mathcal{P}_{g,n}\bigg(\bigg(-\frac{M_1}{3M_0}\bigg)^{\frac{1}{2}}\mathbf{L},\mathbf{M}\bigg) = \displaystyle \somme{\mathbf{q}\in \mathbb{N}^{n}}{}{\alpha_{n,g,\emptyset,\mathbf{q}}}L_1^{2q_1}\cdots L_{n}^{2q_n}.
\end{align*}
Applying the inequality of Proposition~\ref{asymptotics_of_coefficients} and using the third equation of \eqref{few_equations_phi} with $n$ and $p=  \emptyset$ we have for $g$ large enough
\begin{align*}    \forall \mathbf{q}\in \mathbb{N}^{n},\hspace{0.5cm} |\alpha_{n,g,\emptyset,\mathbf{q}}| \le C_n |\alpha_{n,g}| \prod_{i=1}^{n}\frac{1}{q_i!} .
\end{align*}
Moreover for $\mathbf{q}$ fixed, using Proposition~\ref{asymptotics_of_coefficients} we find
\begin{align*}   \alpha_{n,g,\emptyset,\mathbf{q}} \sim  \alpha_{n,g}\prod_{i=1}^{n}\frac{1}{(2q_i+1)!},
\end{align*}
which verifies the claimed asymptotic formula.

To prove the second part of the statement, let us reason by contradiction. Suppose that for any $g_0 \ge 0$ there exist $g > g' \geq g_0$ and $\mathbf{L}\in K$ such that 
\begin{align*}
    T_{g',n}\bigg(\bigg(-\frac{M_1}{3M_0}\bigg)^{\frac{1}{2}}\mathbf{L},\mu_{g}\bigg) > 2e^{\somme{i=1}{n}{L_i}}T_{g',n}(\mu_{g}).
\end{align*}
Choosing an explicit dependence of such values $g = \psi(g_0)$, $g' = \psi'(g_0)$ on $g_0$, we consider the sequence $(v_k)$ given by the recursion relation $v_0 = 0$ and $v_{k+1} = \psi(v_k)$.
For $k \ge 1$, we define $g_k = \psi'(v_{k-1})$, such that in particular $g_k\to \infty$ as $k\to\infty$.
It follows that there exists a sequence $(\mathbf{L}_k)$ in $K$, such that the inequality
\begin{align*}
    T_{g_k,n}\bigg(\bigg(-\frac{M_1}{3M_0}\bigg)^{\frac{1}{2}}\mathbf{L}_k,\mu_{v_k}\bigg) > 2e^{\somme{i=1}{n}{L_i}}T_{g_k,n}(\mu_{v_k})
\end{align*}
holds for all $k\geq 1$.
Since $g_k\le v_k$ and therefore $\mu_c - \mu_{v_k} = o(v_k^{-2}) = o(g_k^{-2})$, we obtain a contradiction with the first part of the proposition.
\end{proof}

We finish this section with an estimate that we will need later (in the proof of Proposition~\ref{number_of_simple_tight_closed_geodesics}).

\begin{proposition}\label{bound_separating_multicurve}
   For any $\mu_g \underset{g \to +\infty}{\rightarrow}\mu_c$ such that $\mu_c- \mu_g = o(g^{-2})$, any $r >0$ and $1 < q \le r+1$, we have for $g$ large enough
    \begin{align*}
       \frac{1}{M_0(\mu_g)^{r} T_{g}(\mu_g)}\cdot \somme{(g_1,n_1),\ldots,(g_q,n_q)}{}{}\displaystyle \prod_{i=1}^{q}T_{g_i,n_i}(\mu_g) \le  \frac{C_r}{g^{q-1}},
    \end{align*}
    where the sum is taken over all sequences $(g_1,n_1),\ldots,(g_q,n_q)$ such that $2g_i+n_i\ge 3$ and $\somme{i=1}{q}{g_i} = g + q -r -1 $ and $\somme{i=1}{q}{n_i} = 2r$. The constant $C_r > 0$ only depends on $r$ and the sequence $(\mu_g)$.
\end{proposition}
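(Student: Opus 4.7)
The plan is to write each $T_{g,n}(\mu)$ via Theorem~\ref{polynomial_recursion} as $M_0^{-(2g-2+n)}\alpha_{n,g}(\mathbf{M})$. Because $\sum_i(2g_i-2+n_i)=2g-2$, the $M_0$ powers in the numerator and denominator cancel and the ratio becomes
\begin{equation*}
\frac{1}{M_0^r T_g(\mu)}\prod_i T_{g_i,n_i}(\mu)=\frac{1}{M_0^r\,\alpha_{0,g}(\mathbf{M})}\prod_i\alpha_{n_i,g_i}(\mathbf{M}).
\end{equation*}
I would then apply Proposition~\ref{uniform_bound_derivative_and_A} (with $\mathbf{p}=\emptyset$) to bound each numerator factor by $C_{n_i}(-M_1/M_0)^{n_i}g_i^{n_i}\varphi(g_i)$, and Proposition~\ref{split_sum} to replace the denominator by $\varphi(g)$ up to a constant. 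Writing $\varphi(g)=(-M_1/M_0)^{3g-3}f(g)$ with $f(g):=\langle\tau_2^{3g-3}\rangle_g/(3g-3)!$ and using $\sum n_i=2r$, $\sum g_i=g+q-r-1$, a direct bookkeeping shows that the exponents of $-M_1$ and $M_0$ collapse into the single bounded factor $(-M_1)^{-r}$ (bounded because $-M_1(\mu_c)>0$ by Proposition~\ref{asymptotics_M1_M0_R}). The claim is thereby reduced to the combinatorial inequality
\begin{equation*}
\sum_{(g_i,n_i)}\Bigl(\prod_i g_i^{n_i}\Bigr)\frac{\prod_i f(g_i)}{f(g)}\le\frac{C_r}{g^{q-1}}.
\end{equation*}

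For this sum, the key input is the asymptotic $f(g)\sim C_*\gamma^g((g-1)!)^2/g^2$ with $\gamma=25/12$, extracted from \eqref{intersection_numbers} with $k=0$. The exponential factor $\gamma^{\sum g_i-g}=\gamma^{q-r-1}$ is a constant, so each term is comparable to $g^2\prod_i g_i^{n_i-2}((g_i-1)!)^2/((g-1)!)^2$. In the dominant regime where one component (say $g_1$) carries most of the genus and $s:=g_2+\cdots+g_q$ remains bounded, I would use $(g_1-1)!/(g-1)!\le 1/g^{g-g_1}$ with $g-g_1=r+1-q+s$ and $g_1^{n_1-2}\sim g^{n_1-2}$, yielding a contribution of at most $Cg^{n_1-2(r+1-q+s)}$ times factors depending only on $(g_2,\ldots,g_q,n_1,\ldots,n_q)$. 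The decisive observation is that the constraints $2g_i+n_i\ge 3$ for $i\ge 2$ sum to $2s+2r-n_1\ge 3(q-1)$, equivalently
\begin{equation*}
n_1-2(r+1-q+s)\le -(q-1),
\end{equation*}
so each such term is of order at most $g^{-(q-1)}$. Since increasing $s$ by one multiplies the contribution by $g^{-2}$, while only polynomially many $(g_i,n_i)$ correspond to each value of $s$, summing over feasible configurations gives $C_r\,g^{-(q-1)}$.

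The main obstacle lies in the two ``edge'' regimes that fall outside the clean asymptotic picture. First, for $g_i$ below the threshold where Proposition~\ref{uniform_bound_derivative_and_A} applies, I would fall back on the cruder bound of Proposition~\ref{crude_bound}; since only boundedly many such small components can occur per configuration, the leading order behaviour is preserved. Second, the ``balanced'' configurations, in which two or more $g_i$ are of order $g$, must be shown to be exponentially suppressed: by Stirling the factorial ratio $\prod(g_i-1)!/(g-1)!$ is essentially $1/\binom{g-r-1}{g_1-1,\ldots,g_q-1}$, which decays like $q^{-g}$ as soon as two components have comparable size, so such configurations are entirely negligible compared to the unbalanced ones.
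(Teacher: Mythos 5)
Your proposal follows the same route as the paper's: rewrite via Theorem~\ref{polynomial_recursion}, cancel the $M_0$ powers using $\sum_i(2g_i-2+n_i)=2g-2$, bound the numerator factors by $\varphi(g_i)$-type quantities, use Proposition~\ref{split_sum} for the denominator, insert \eqref{intersection_numbers}, and reduce to the factorial-ratio inequality. The final combinatorial step, which the paper merely asserts, is the part you actually work out, and your verification is correct: the constraint $2g_i+n_i\ge 3$ for the $q-1$ small components gives $2s+2r-n_1\ge 3(q-1)$, hence the decisive exponent bound $n_1-2(r+1-q+s)\le -(q-1)$, and balanced splits are exponentially suppressed by Stirling. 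That is a genuine improvement in explicitness over the paper's terse ``we conclude using the bound.''

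Two small points deserve flagging. First, you cannot quite ``apply Proposition~\ref{uniform_bound_derivative_and_A}'' to bound $\alpha_{n_i,g_i}(\mathbf{M})$: that statement is proved for a \emph{coupled} pair (genus $g$, parameter $\mu_g$) as $g\to\infty$, whereas here you need the bound for all genera $g_i\in\{g_0,\ldots,g\}$ with the \emph{fixed} parameter $\mu_g$. The paper obtains this uniform-in-$g'$ version by a contradiction argument (as in the proof of Proposition~\ref{estimate_boundary}) starting from Proposition~\ref{asymptotics_of_coefficients}; your sketch elides this step, and as written would only justify the bound for $g_i$ comparable to $g$. Second, the intermediate inequality $(g_1-1)!/(g-1)!\le 1/g^{g-g_1}$ should read $\le 1/g_1^{g-g_1}$; this is immaterial in your dominant regime $g_1\sim g$ but should be stated correctly since the exponent $g-g_1$ is what carries the estimate.
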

\begin{proof}
To lighten the notation in the proof, we do not specify the dependence on $\mu$ since it is always $\mu = \mu_g$.
By Theorem~\ref{polynomial_recursion}, we have
\begin{align*}
    T_{g,n} = \frac{1}{M_0^{2g-2+n}}\mathcal{P}_{g,n}(\mathbf{M})=\frac{1}{M_0^{2g-2+n}}\alpha_{n,g}(\mathbf{M}).
\end{align*}
Since $\somme{i=1}{q}{2g_i-2+n_i} = 2g-2$ we only need to control 
\begin{align*}
      \somme{(g_1,n_1),\ldots,(g_q,n_q)}{}{}\frac{ \prod_{i=1}^{q}\alpha_{n_i,g_i}(\mathbf{M})}{M_0^{r} \alpha_{g}(\mathbf{M})}.
\end{align*}
Reasoning by contradiction (see proof~\ref{proof_estimate_boundary} for a similar proof) and using Proposition~\ref{asymptotics_of_coefficients}, we let the reader verify that there exists $g_0$ large enough such that for any $g \ge g_0$ and any $g'\in \{g_0,\cdots,g\}$ we have 
\begin{align*}
     \alpha_{n,g'}(\mathbf{M}) \le C_n \bigg(-\frac{M_1}{M_0}\bigg)^{n}(g')^{n} \varphi(g') = C_n \bigg(-\frac{M_1}{M_0}\bigg)^{3g'-3+n}(g')^{n}\frac{\langle \tau_2^{3g'-3}\rangle_{g^{'}}}{(3g'-3)!},
\end{align*}
where $C_n$ only depends on $n$. Note that $g_0$ depends on the sequence $(\mu_g)$ chosen.\\
Moreover, using Proposition~\ref{crude_bound}, for $g$ large enough we can write
\begin{align*}
   \forall g' \in \{0,\cdots,g_0-1\},\hspace{0.1cm}\forall n' \in \{0,\cdots,r\},\hspace{0.3cm} \alpha_{n',g'}(\mathbf{M}) = \mathcal{T}_{g',n'}(\mathbf{M}) \le C_{n',g'} \bigg(-\frac{M_1}{M_0}\bigg)^{3g-3+n},
\end{align*}
where $C_{n',g'} > 0$ is a constant that only depends on $n'$ and $g'$.\\
Let $\displaystyle C : = \max\{\max_{1\leq n\leq r} C_n^r, \max_{1 \le n \le r,\,0\le g'\le g_0}C_{n,g'}^r\}$. We may combine these bounds so that for $g$ large enough we have
\begin{align*}
     \frac{\displaystyle \prod_{i=1}^{q} \alpha_{n_i,g_i}(\mathbf{M}) }{\displaystyle \prod_{i=1}^{q}\bigg(-\frac{M_1}{M_0}\bigg)^{3g_i-3+n_i} \displaystyle \prod_{\substack{i=1,\cdots,q\\g_i \ge g_0}}\frac{\langle \tau_2^{3g_{i}-3}\rangle_{g_i}}{(3g_i-3)!}g_{i}^{n_i}} \le C,
\end{align*}
where $C > 0$ is a constant that only depend on $r$ and the sequence $(\mu_g)$.\\
 Using Proposition~\ref{split_sum} and the fact that $\somme{i=1}{q}{3g_i - 3 +n_i} = 3g-3-r$, we deduce that it remains to bound 
\begin{align*}
    \somme{(g_1,n_1),\ldots,(g_q,n_q)}{}{}\frac{\displaystyle \prod_{\substack{i=1,\cdots,q\\g_i \ge g_0}}\frac{\langle \tau_2^{3g_{i}-3}\rangle_{g_i}}{(3g_i-3)!}g_i^{n_i}}{\displaystyle \frac{\langle \tau_2^{3g-3}\rangle_{g}}{(3g-3)!}},
\end{align*}
Using \eqref{intersection_numbers}, we only have to bound
\begin{align*}
    \somme{(g_1,n_1),\ldots,(g_q,n_q)}{}{}\frac{\displaystyle \prod_{\substack{i=1,\cdots,q\\g_i \ge g_0}}\frac{(g_i-1)!^2 }{(5g_i-5)(5g_i-3)}g_i^{n_i}}{\displaystyle \frac{(g-1)!^{2}}{(5g-5)(5g-3)}},
\end{align*}
where we have used the fact that the terms corresponding to $g_i \le g_0$ can be treated as constants that depend on $g_0$ only. Using the Stirling formula, there exists a constant $C > 0$ depending on $r$ such that the last sum is bounded by 
\begin{align*}
    C\somme{(g_1,n_1),\ldots,(g_q,n_q)}{}{}\frac{\prod_{\substack{i=1,\cdots,q\\g_i \ge g_0}}\displaystyle (g_i-1)^{2g_i-3+n_i}}{(g-1)^{2g-3}}.
\end{align*}
We conclude using the bound 
\begin{align*}
    \somme{(g_1,n_1),\ldots,(g_q,n_q)}{}{}\frac{\prod_{\substack{i=1,\cdots,q\\g_i \ge g_0}}\displaystyle (g_i-1)^{2g_i-3+n_i}}{(g-1)^{2g-3}} \le C \frac{1}{g^{q-1}},
\end{align*}
where $C_r > 0$ only depends on $r$. This concludes the proof.
\end{proof}

\subsection{Study of the number of cusps}
This subsection is dedicated to estimates on the distribution of the number $\mathcal{N}_{g,\mu}$ of cusps in the $\mu$-Boltzmann hyperbolic surface of genus $g$, as defined in the introduction. 
More precisely, in the regime $\mu_c-\mu_g = o(g^{-2})$ we give an asymptotic for $\mathbb{E}[\mathcal{N}_{g,\mu_g}]$ and show a concentration phenomenon using a second moment method.\\

\begin{proposition}\label{number_of_cusps_mean_and_concentration}
     For any $\mu_g \underset{g \to +\infty}{\rightarrow}\mu_c$ such that $\mu_c - \mu_g = o(g^{-2})$ and any $r \ge 0$, we have 
    \begin{align*}
        \mathbb{E}[\mathcal{N}_{g,\mu_g}(\mathcal{N}_{g,\mu_g}-1)\cdots (\mathcal{N}_{g,\mu_g}-r)] &\underset{g \to +\infty}{\sim} \bigg(\frac{5g \mu_c}{2(\mu_c - \mu_g)}\bigg)^{r+1}.
    \end{align*}
\end{proposition}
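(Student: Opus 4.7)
My plan is to reduce the factorial moment to a ratio of tight Weil--Petersson volume generating functions, and then invoke the asymptotics already established in this section. By the definition of the $\mu$-Boltzmann measure, $\Pf^{\mathrm{WP}}_{g,\mu_g}(\mathcal{N}_{g,\mu_g}=p) = F_g(\mu_g)^{-1}\mu_g^p V_{g,p}/p!$. Expanding the factorial moment and using $p!/(p-r-1)! = p(p-1)\cdots(p-r)$,
\[ \mathbb{E}[\mathcal{N}_{g,\mu_g}(\mathcal{N}_{g,\mu_g}-1)\cdots(\mathcal{N}_{g,\mu_g}-r)] = \frac{1}{F_g(\mu_g)}\sum_{p\ge r+1}\frac{\mu_g^{p}V_{g,p}}{(p-r-1)!}. \]
Substituting $q=p-r-1$, the right-hand side equals $\mu_g^{r+1}\,F_{g,r+1}(\mathbf{0}^{r+1},\mu_g)/F_g(\mu_g)$. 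Since all boundary lengths vanish, $F_{g,r+1}(\mathbf{0}^{r+1},\mu_g)=T_{g,r+1}(\mu_g)$ and $F_g(\mu_g)=T_g(\mu_g)$, so the factorial moment equals $\mu_g^{r+1}\,T_{g,r+1}(\mu_g)/T_g(\mu_g)$.

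Next, I would apply Theorem~\ref{polynomial_recursion} at $\mathbf{L}=\mathbf{0}$ to write
\[ \frac{T_{g,r+1}(\mu_g)}{T_g(\mu_g)} = \frac{1}{M_0(\mu_g)^{r+1}}\cdot\frac{\mathcal{P}_{g,r+1}(\mathbf{0},\mathbf{M}(\mu_g))}{\mathcal{P}_{g,0}(\mathbf{M}(\mu_g))} = \frac{1}{M_0(\mu_g)^{r+1}}\cdot\frac{\alpha_{r+1,g,\emptyset,\mathbf{0}}(\mathbf{M}(\mu_g))}{\alpha_g(\mathbf{M}(\mu_g))}. \]
Proposition~\ref{asymptotics_of_coefficients}, specialized to $\mathbf{p}=\emptyset$ and $\mathbf{q}=\mathbf{0}$, gives $\alpha_{r+1,g,\emptyset,\mathbf{0}}(\mathbf{M}(\mu_g))\sim \varphi(r+1,g,\emptyset)$, which by \eqref{def_phi} equals $(-M_1(\mu_g)/M_0(\mu_g))^{r+1}(5g)^{r+1}\varphi(g)$ (the product $\prod_i (2q_i+1)!^{-1}$ collapses to $1$ when $\mathbf{q}=\mathbf{0}$). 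Meanwhile Proposition~\ref{split_sum} yields $\alpha_g(\mathbf{M}(\mu_g))\sim\varphi(g)$. Dividing, the common factor $\varphi(g)$ cancels and
\[ \frac{T_{g,r+1}(\mu_g)}{T_g(\mu_g)} \sim \left(\frac{-M_1(\mu_g)}{M_0(\mu_g)^2}\right)^{r+1}(5g)^{r+1}. \]

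Finally, Proposition~\ref{asymptotics_M1_M0_R} gives $-M_1(\mu_g)\to 4\pi^2 J_1(j_0)/j_0$ and $M_0(\mu_g)^2\sim (8\pi^2 J_1(j_0)/j_0)(\mu_c-\mu_g)$, so $-M_1(\mu_g)/M_0(\mu_g)^2\sim 1/(2(\mu_c-\mu_g))$. Combined with $\mu_g^{r+1}\to\mu_c^{r+1}$, this delivers the claimed equivalent $\bigl(5g\mu_c/(2(\mu_c-\mu_g))\bigr)^{r+1}$. I do not anticipate a genuine obstacle: all the analytic work is already carried out in Propositions~\ref{split_sum}, \ref{asymptotics_of_coefficients} and \ref{asymptotics_M1_M0_R}, and the present statement simply harvests them at the specific point $(\mathbf{p},\mathbf{q})=(\emptyset,\mathbf{0})$; the only step that requires care is matching the multiplicative constants correctly.
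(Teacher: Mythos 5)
Your proof is correct and follows essentially the same route as the paper's: reduce the factorial moment to $\mu_g^{r+1}\,T_{g,r+1}(\mu_g)/T_g(\mu_g)$ via the Boltzmann distribution, apply Theorem~\ref{polynomial_recursion} together with the asymptotic $\alpha_{r+1,g}(\mathbf{M})\sim\varphi(r+1,g,\emptyset)$ (the paper cites Proposition~\ref{Estimate_derivative_Pg0} and \eqref{def_phi} where you cite the more general Proposition~\ref{asymptotics_of_coefficients} and Proposition~\ref{split_sum}, but these agree at $\mathbf{p}=\emptyset$, $\mathbf{q}=\mathbf{0}$), and finish with Proposition~\ref{asymptotics_M1_M0_R}. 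The constants are matched correctly.
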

\begin{proof}
 We write
    \begin{align}
        \mathbb{E}[\mathcal{N}_{g,\mu_g}(\mathcal{N}_{g,\mu_g}-1)\cdots (\mathcal{N}_{g,\mu_g}-r)] &= T_{g}(\mu_g)^{-1}\somme{n=0}{+\infty}{n(n-1)\cdots (n-r)\frac{\mu_g^{n}}{n!}V_{g,n}}\nonumber\\
        &=\mu_g^{r+1}\frac{T_{g,r+1}(\mu_g)}{T_{g}(\mu_g)}.\label{number_of_cusps_formula}
    \end{align}
Using Theorem~\ref{polynomial_recursion}, Proposition~\ref{Estimate_derivative_Pg0} and \eqref{def_phi} we deduce that
\begin{align*}
    \frac{T_{g,r+1}(\mu_g)}{T_{g}(\mu_g)} = (-M_1(\mu_g))^{r+1}M_0(\mu_g)^{-2r-2}(5g)^{r+1}\bigg(1+o(1)\bigg)
\end{align*}
Using Proposition~\ref{asymptotics_M1_M0_R} we obtain 
\begin{align*}
    \mathbb{E}[\mathcal{N}_{g,\mu_g}(\mathcal{N}_{g,\mu_g}-1)\cdots (\mathcal{N}_{g,\mu_g}-r)] &= \bigg(\frac{5 g\mu_c}{2(\mu_c - \mu_g)}\bigg)^{r+1}\bigg(1+o(1)\bigg).
\end{align*}
\end{proof}
It follows that we have the following concentration phenomenon for the variables $\mathcal{N}_{g,\mu_g}$.
\begin{corollary}\label{Concentration_result}
      For any $\mu_g \underset{g \to +\infty}{\rightarrow}\mu_c$ such that $\mu_c - \mu_g = o(g^{-2})$, any $r \ge 0$ and any $\varepsilon > 0$, we have
     \begin{align*}
         \Pf\bigg(\big|\mathcal{N}_{g,\mu_g} -\mathbb{E}(\mathcal{N}_{g,\mu_g})\big| \ge \varepsilon \mathbb{E}(\mathcal{N}_{g,\mu_g})\bigg) \underset{g \to +\infty}{\rightarrow}0.
     \end{align*}
\end{corollary}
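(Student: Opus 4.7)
The plan is to apply Chebyshev's inequality, with the asymptotic expansion provided by Proposition~\ref{number_of_cusps_mean_and_concentration} supplying both the mean and the variance at leading order. Note that the parameter $r$ in the statement is a vestige of the previous proposition and does not appear in the conclusion, so only the values $r=0$ and $r=1$ are needed.

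First, I would set $\displaystyle m_g := \mathbb{E}[\mathcal{N}_{g,\mu_g}]$. The $r=0$ case of Proposition~\ref{number_of_cusps_mean_and_concentration} gives
\[
m_g \underset{g\to\infty}{\sim} \frac{5g\mu_c}{2(\mu_c-\mu_g)},
\]
while the $r=1$ case yields
\[
\mathbb{E}[\mathcal{N}_{g,\mu_g}(\mathcal{N}_{g,\mu_g}-1)] \underset{g\to\infty}{\sim} \left(\frac{5g\mu_c}{2(\mu_c-\mu_g)}\right)^{2} \underset{g\to\infty}{\sim} m_g^{2}.
\]
Since the assumption $\mu_c-\mu_g = o(g^{-2})$ forces $m_g \to \infty$ (indeed $m_g \gg g^3$), it follows that $m_g = o(m_g^2)$.

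Next I would compute the variance. Writing
\[
\mathrm{Var}(\mathcal{N}_{g,\mu_g}) = \mathbb{E}[\mathcal{N}_{g,\mu_g}(\mathcal{N}_{g,\mu_g}-1)] + m_g - m_g^{2},
\]
the two displays above combine to give
\[
\mathrm{Var}(\mathcal{N}_{g,\mu_g}) = m_g^{2}\cdot o(1) + m_g = o(m_g^{2}).
\]

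Finally, Chebyshev's inequality yields, for any fixed $\varepsilon > 0$,
\[
\mathbb{P}\bigl(|\mathcal{N}_{g,\mu_g} - m_g| \ge \varepsilon m_g\bigr) \le \frac{\mathrm{Var}(\mathcal{N}_{g,\mu_g})}{\varepsilon^{2} m_g^{2}} \xrightarrow[g\to\infty]{} 0,
\]
which is exactly the claim. There is no genuine obstacle here: the entire content of the corollary is packaged in the sharp asymptotics of the first two factorial moments established in Proposition~\ref{number_of_cusps_mean_and_concentration}, and the conclusion is a textbook second moment argument.
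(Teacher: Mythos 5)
Your argument is correct and is essentially identical to the paper's: both read off the $r=0$ and $r=1$ asymptotics from Proposition~\ref{number_of_cusps_mean_and_concentration}, deduce that $\operatorname{Var}(\mathcal{N}_{g,\mu_g})/\mathbb{E}[\mathcal{N}_{g,\mu_g}]^2 \to 0$ via the identity $\operatorname{Var}(X)=\mathbb{E}[X(X-1)]+\mathbb{E}[X]-\mathbb{E}[X]^2$, and finish with Chebyshev's inequality. You have merely spelled out the $m_g=o(m_g^2)$ step, which the paper leaves implicit; this is fine and correct since $m_g\to\infty$.
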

\begin{proof}
With the help of Proposition~\ref{number_of_cusps_mean_and_concentration} we find that
\begin{align}\label{variance_computation}
    \frac{\operatorname{Var}(\mathcal{N}_{g,\mu_g})}{\mathbb{E}(\mathcal{N}_{g,\mu_g})^2} &= \frac{\mathbb{E}(\mathcal{N}_{g,\mu_g}(\mathcal{N}_{g,\mu_g}-1))+\mathbb{E}(\mathcal{N}_{g,\mu_g})}{\mathbb{E}(\mathcal{N}_{g,\mu_g})^2} -1 \underset{g \to +\infty}{\rightarrow}0.
\end{align}
We conclude with the Bienaymé-Chebychev inequality.
\end{proof}
\section{Tight length spectrum}\label{proof_main_theorem}
This section is dedicated to proving the main Theorem~\ref{main_theorem}. 
Recall its equivalent formulation in terms of the random variables $N_{g,\mu,a,b}^{\mathrm{tight}}$ counting the primitive tight closed geodesics with length in $[\alpha_1^{-1}(\mu_c-\mu)^{-\frac{1}{4}}a,\alpha_1^{-1}(\mu_c-\mu)^{-\frac{1}{4}}b]$ in a random hyperbolic surface $X$ chosen under $\Pf^{\mathrm{WP}}_{g,\mu}$, with the constant $\alpha_1 = \sqrt{\frac{6}{\pi}\sqrt{\frac{2j_0}{J_1(j_0)}}}$ as in Theorem~\ref{main_theorem}. Observe from Proposition~\ref{asymptotics_M1_M0_R} that we have
\begin{align}\label{change_coefficient}
    \bigg(-\frac{M_1}{12M_0}\bigg)^{\frac{1}{2}} \underset{\mu \to \mu_c}{\sim} \alpha_1^{-1}(\mu_c-\mu)^{-\frac{1}{4}}.
\end{align}
For the sake of simplifying the notation we assume for the rest of this section that $N_{g,\mu,a,b}^{\mathrm{tight}}$ denotes the number of primitive tight closed geodesics with length in 
\begin{align*}\bigg[ \displaystyle \bigg(-\frac{M_1}{12M_0}\bigg)^{\frac12} a, \displaystyle \bigg(-\frac{M_1}{12M_0}\bigg)^{\frac12} b\bigg].
\end{align*}
Indeed, if one proves the convergence in distribution with this definition for $N_{g,\mu,a,b}^{\mathrm{tight}}$, the statement of Theorem~\ref{main_theorem} follows easily. For $X$ a random variable taking values in $\mathbb{N}$ we introduce the notation
\begin{align}\label{moment_with_increment}
    (X)_k = X(X-1)\cdots(X-k+1), \hspace{0.5cm} k \ge 0
\end{align}

\subsection{Moments}

Let us follow the same strategy as in \cite{Mirzakhani_petri_2019}. It amounts to proving that the joint factorial moments $\mathbb{E}\bigg[(N^{\mathrm{tight}}_{g,\mu_g,a_1,b_1})_{r_1}\cdots(N^{\mathrm{tight}}_{g,\mu_g,a_n,b_n})_{r_n}\bigg]$ converge as $g \to +\infty$ to $\lambda_{a_1,b_1}^{r_1}\cdots\lambda_{a_n,b_n}^{r_n}$. To do so, we interpret 
\begin{align*}
(N^{\mathrm{tight}}_{g,\mu_g,a_1,b_1})_{r_1}\cdots(N^{\mathrm{tight}}_{g,\mu_g,a_n,b_n})_{r_n}
\end{align*}
as the number of $n$-tuples where the $i^{\mathrm{th}}$ element is an $r_i$-tuple made of distinct primitive tight closed geodesics with length in $\displaystyle \bigg[\bigg(-\frac{M_1}{12M_0}\bigg)^{\frac{1}{2}}a_i,\bigg(-\frac{M_1}{12M_0}\bigg)^{\frac{1}{2}}b_i\bigg]$. We write
\begin{align*}
     (N^{\mathrm{tight}}_{g,\mu_g,a_1,b_1})_{r_1}\cdots(N^{\mathrm{tight}}_{g,\mu_g,a_n,b_n})_{r_n} &= \hat{N}_{g,\mu_g,a_1,b_1,r_1,\dots,a_n,b_n,r_n} \\&+ N^{\times}_{g,\mu_g,a_1,b_1,r_1,\dots,a_n,b_n,r_n},
\end{align*} where the term $\hat{N}_{g,\mu_g,a_1,b_1,r_1,\dots,a_n,b_n,r_n}$ counts the number of tuples where all geodesics are simple and disjoint and the term $N^{\times}_{g,\mu_g,a_1,b_1,r_1,\dots,a_n,b_n,r_n}$ counts the tuples in which either there is a non-simple geodesic or at least two distinct geodesics intersect.

The expectation of $\hat{N}_{g,\mu_g,a_1,b_1,r_1\dots,a_n,b_n,r_n}$ can be easily computed using the integration formula \eqref{integration_formula_version_generating_function} and the estimates for tight Weil-Petersson volumes obtained in Section~\ref{estimates_tight_volumes}.

\begin{proposition}\label{number_of_simple_tight_closed_geodesics}
     For any $\mu_g \underset{g \to +\infty}{\rightarrow}\mu_c$ such that $\mu_c - \mu_g = o(g^{-2})$, any disjoint compact intervals $([a_i,b_i])_{1\le i \le n}$ and any $r_1,\cdots,r_n \ge 1$ we have
    \begin{align*}
        \mathbb{E}\bigg[\hat{N}_{g,\mu_g,a_1,b_1,r_1\dots,a_n,b_n,r_n}\bigg] \underset{g \to +\infty}{\overset{}{\longrightarrow}} \lambda_{a_1,b_1}^{r_1}\cdots\lambda_{a_n,b_n}^{r_n}.
    \end{align*}
\end{proposition}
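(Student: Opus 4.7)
The plan is to adapt the strategy of Mirzakhani--Petri \cite{Mirzakhani_petri_2019} to the tight setting, using Proposition~\ref{prop:integrationformula} in place of Mirzakhani's original formula and the volume asymptotics of Section~\ref{estimates_tight_volumes}. Set $R := r_1 + \cdots + r_n$ and $\alpha := (-M_1(\mu_g)/(12 M_0(\mu_g)))^{1/2}$. The first step is to decompose
\[
\hat{N}_{g,\mu_g,a_1,b_1,r_1,\ldots,a_n,b_n,r_n}(X) = \sum_{[\Gamma]} F^{\Gamma}_{\mathrm{tight}}(X),
\]
where $[\Gamma]$ ranges over MCG-orbits of ordered multi-curves $\Gamma = (\gamma_1, \ldots, \gamma_R)$ of pairwise disjoint, pairwise non-isotopic simple closed curves on $\Sigma_g$, and the test function inside $F^{\Gamma}_{\mathrm{tight}}$ is the labelled indicator that the curve at position $r_1 + \cdots + r_{i-1} + j$ has length in $[\alpha a_i, \alpha b_i]$. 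Dividing the generating-function integration formula \eqref{integration_formula_version_generating_function} by $T_g(\mu_g)$ expresses $\mathbb{E}[\hat{N}]$ as a sum over topological types, each term being $C_\Gamma\, T_g(\mu_g)^{-1}$ times an integral of $x_1 \cdots x_R\, f(\mathbf{x}) \prod_{j=1}^q T_{g_j,k_j}(\mu_g, \mathbf{x}^{(j)})$, where $\Sigma_g \setminus \Gamma = \bigsqcup_{j=1}^q \Sigma_{g_j, k_j}$ depends on the type.

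The dominant contribution comes from the jointly non-separating type, where $q = 1$, $\Sigma_g \setminus \Gamma = \Sigma_{g-R, 2R}$, $C_\Gamma = 2^{-R}$, and the change-of-coordinates principle ensures a single MCG-orbit. Substituting $x_i = \alpha t_i$ and applying Proposition~\ref{estimate_boundary} (noting $\alpha = \tfrac12 (-M_1/(3M_0))^{1/2}$, so each rescaled boundary length corresponds to $L_i = t_i/2$ in its notation) gives
\[
T_{g-R, 2R}(\alpha \mathbf{t}, \alpha \mathbf{t}, \mu_g) \sim T_{g-R, 2R}(\mu_g) \prod_{i=1}^R \left(\frac{\sinh(t_i/2)}{t_i/2}\right)^{\!2},
\]
uniformly on the compact domain $[a,b]^R$, with the exponential upper bound of the same proposition providing a majorant validating dominated convergence. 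The identity $t(\sinh(t/2)/(t/2))^2 = 2(\cosh t - 1)/t$ converts the integrand into $2^R \prod_i (\cosh t_i - 1)/t_i$, which integrates against $f$ to $2^R \prod_i \lambda_{a_i,b_i}^{r_i}$. Combined with $C_\Gamma = 2^{-R}$, the dominant contribution reduces to $(\alpha^{2R} T_{g-R, 2R}(\mu_g)/T_g(\mu_g)) \prod_i \lambda_{a_i,b_i}^{r_i}\,(1 + o(1))$, and a direct computation combining Theorem~\ref{polynomial_recursion}, Proposition~\ref{Estimate_derivative_Pg0}, formula~\eqref{def_phi}, the intersection-number asymptotic~\eqref{intersection_numbers} with $k = 0$, Stirling's formula and Proposition~\ref{asymptotics_M1_M0_R} yields $\alpha^{2R} T_{g-R,2R}(\mu_g)/T_g(\mu_g) \to 1$.

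The last step, which I expect to be the main obstacle, is to show that every other topological type contributes $o(1)$. For such a type $q \ge 2$, with $\sum_j g_j = g + q - 1 - R$ and $\sum_j k_j = 2R$. The uniform bound $T_{g_j, k_j}(\alpha \mathbf{L}, \mu_g) \le 2 e^{\sum L_i} T_{g_j, k_j}(\mu_g)$ from Proposition~\ref{estimate_boundary} dominates the integrand on $[a,b]^R$ by an explicit integrable function, so the contribution of a fixed type is controlled by an absolute constant depending only on $R$ and the intervals, multiplied by $\alpha^{2R} \prod_j T_{g_j, k_j}(\mu_g) / T_g(\mu_g)$. Since $\alpha^{2R} = O(M_0^{-R})$, Proposition~\ref{bound_separating_multicurve} (with $r = R$) yields a bound $O(g^{-(q-1)})$ on the sum over all piece-decompositions $(g_j, k_j)$ compatible with a separating topology. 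The remaining technical difficulty is combinatorial: the number of MCG-orbits of multi-curves realising a given decomposition grows at most polynomially in $g$, and one must check that this polynomial factor is absorbed by the $g^{-(q-1)}$ decay for every $q \ge 2$. Summing over all separating types then gives $o(1)$, completing the proof.
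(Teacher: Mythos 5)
Your proof follows the same strategy as the paper's and is essentially correct: decompose via the integration formula into MCG-orbit types, identify the non-separating orbit as dominant, invoke Proposition~\ref{estimate_boundary} for the sinh-kernel asymptotics on compacta, and bound separating types by the uniform inequality from Proposition~\ref{estimate_boundary} together with Proposition~\ref{bound_separating_multicurve}. The algebra ($\alpha = \tfrac12(-M_1/3M_0)^{1/2}$, the factor $2^R$ from the integrand cancelling against $C_\Gamma = 2^{-R}$, and $\alpha^{2R}T_{g-R,2R}/T_g \to 1$ since the $M_0$-powers cancel in $\mathcal{P}_{g-R,2R}/\mathcal{P}_g$) all checks out.

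The ``remaining technical difficulty'' you flag at the end is not actually an open gap, and this is the one place where your account is slightly off. There are two separate counts. (i) The number of ordered sequences $(g_1,n_1),\dots,(g_q,n_q)$ satisfying $\sum g_i = g+q-1-R$ and $\sum n_i = 2R$ is indeed polynomial in $g$ (of order $g^{q-1}$), but this sum is \emph{already} the sum appearing in Proposition~\ref{bound_separating_multicurve}, which produces the $O(g^{-(q-1)})$ bound after dividing by $M_0^R\,T_g$ — there is no extra polynomial factor to absorb on top of it. (ii) For a fixed decomposition $(g_i,n_i)$, the number of MCG-orbits of ordered multicurves realising it is bounded by a constant depending only on $R$ (the change-of-coordinates principle gives a bijection between orbits and gluing patterns of the $2R$ boundary circles, bounded by $(2R)!!$), and in particular is \emph{independent of $g$}, not polynomial. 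This constant is precisely the $(2r)!!$ the paper inserts before invoking Proposition~\ref{bound_separating_multicurve}, and once you separate these two counts your argument closes without further work.
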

\begin{proof}
We define $A = \prod_{i=1}^{n}[a_i,b_i]^{r_i}$ and $r=r_1+\cdots+r_n$. Using Proposition~\ref{prop:integrationformula} in its generating function formulation given in \eqref{integration_formula_version_generating_function} over all MCG$(\Sigma_g)$-orbits of ordered lists $(\Gamma_1,\cdots,\Gamma_n)$ where $\Gamma_i = (\gamma_{i,1},\cdots,\gamma_{i,r_i})$ is an ordered list of disjoint simple closed curves and for $i \neq j$ we have $[\Gamma_i]\cap [\Gamma_j] = \emptyset$, we can write
\begin{align*}
    \mathbb{E}\bigg[\hat{N}_{g,\mu_g,a_1,b_1,r_1\dots,a_n,b_n,r_n}\bigg] = T_{g}(\mu_g)^{-1}\somme{[\Gamma]}{}{C_{\Gamma}\inte{(-\frac{M_1}{12M_0})^{\frac{1}{2}}A}{}{T_{g}(\Gamma,x,\mu_g)x_1 \cdots x_r}{x_1\cdots dx_r}}.
\end{align*}
In the sum, we distinguish the term $[\Gamma_0]$ which corresponds to the non-separating case (see Figure~\ref{non_separating_and_separating} below). This term is written
\begin{align*}
    T_{g}(\mu_g)^{-1}2^{-r}\bigg(-\frac{M_1}{12M_0}\bigg)^{r}\inte{A}{}{T_{g-r,2r}\bigg(\bigg(-\frac{M_1}{12M_0}\bigg)^{\frac12}\bm{x},\mu_g\bigg)x_1\cdots x_r}{x_1\cdots dx_r}.
\end{align*}
Using Proposition~\ref{estimate_boundary} and the compactness of $A$, as $g \to +\infty$ the last term is equivalent to
\begin{align*}
   2^{-r}\bigg(-\frac{M_1}{12M_0}\bigg)^{r}\frac{T_{g-r,2r}(\mu_g)}{T_{g}(\mu_g)}\prod_{i=1}^{n}\bigg(\inte{a_i}{b_i}{\frac{\sinh(x_i/2)^2}{x_i^2/4}x_i}{x_i}\bigg)^{r_i} =  \bigg(-\frac{M_1}{12M_0}\bigg)^{r}\frac{\mathcal{T}_{g-r,2r}(\mathbf{M})}{\mathcal{T}_{g}(\mathbf{M})}\prod_{i=1}^{n}\lambda_{a_i,b_i}^{r_i},
\end{align*}
where the equality follows from Theorem~\ref{polynomial_recursion}. Combining Proposition~\ref{Estimate_derivative_Pg0} and \eqref{intersection_numbers}, we deduce
\begin{align*}
    \bigg(-\frac{M_1}{12M_0}\bigg)^{r}\frac{\mathcal{P}_{g-r,2r}(\mathbf{M})}{\mathcal{P}_{g}(\mathbf{M})} \underset{g \to +\infty}{\rightarrow}1.
\end{align*}
\begin{figure}[H]
    \centering
    \includegraphics[scale=0.35]{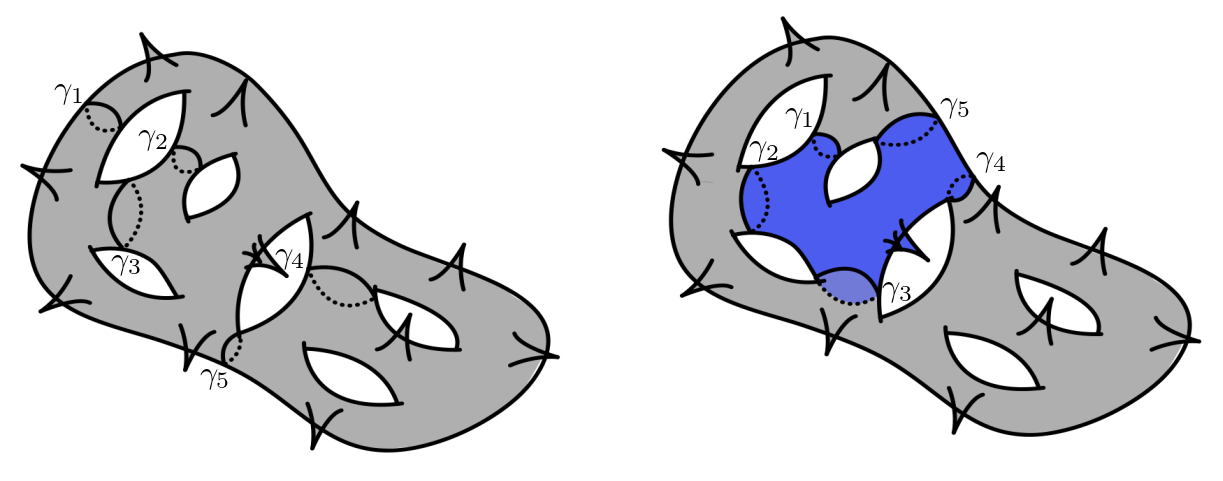}
    \caption{For a multicurve $\Gamma = (\gamma_1,\gamma_2,\gamma_3,\gamma_4,\gamma_5)$, the two different topological cases we distinguish : on the left the non-separating case and on the right the separating case. On the right, we have $q(\Gamma)=2$.}
    \label{non_separating_and_separating}
\end{figure}
It remains to show that the contribution of separating multicurves $[\Gamma]$ is negligible. Let us write:
\begin{align}\label{I_sep}
    I_{sep} = T_{g}(\mu_g)^{-1}\somme{[\Gamma] \ne [\Gamma_0]}{}{C_{\Gamma}\inte{(-\frac{M_1}{12M_0})^{\frac{1}{2}}A}{}{T_{g}(\Gamma,x,\mu_g)x_1 \cdots x_r}{x_1\cdots dx_r}}
\end{align}
Let us order the orbits $[\Gamma]$ by how many connected components $\Sigma_{g} \backslash \Gamma$ is made of, we call $q(\Gamma)$ this quantity (see Figure~\ref{non_separating_and_separating}). We need to bound
\begin{align}\label{bound_separating_case}
    T_{g}(\mu_g)^{-1}\somme{q=2}{r}{\somme{\substack{[\Gamma]\\q(\Gamma)=q}}{}{\inte{(-\frac{M_1}{12M_0})^{\frac{1}{2}}A}{}{T_{g}(\Gamma,x,\mu_g)x_1 \cdots x_r}{x_1\cdots dx_r}}},
\end{align}
where we have used the fact that $C_{\Gamma}  \le 1$.
For $\Gamma$ fixed, we write $\Sigma_g \backslash \Gamma = \displaystyle \bigsqcup_{i=1}^{q} \Sigma_{g_i,n_i}$, then the integral in the last sum can be rewritten
\begin{align}\label{bound_with_boundary}
     T_{g}(\mu_g)^{-1}\inte{(-\frac{M_1}{12M_0})^{\frac{1}{2}}A}{}{\prod_{i=1}^{q}T_{g_i,n_i}(\bm{x}^{(i)},\mu_g)x_1 \cdots x_{r}}{x_1\cdots \mathrm{d}x_{r}},
\end{align}
where $\bm{x}^{(i)}$ denotes the tuple of coordinates $x_{j}$ of $\bm{x}$ such that $\gamma_{j}$ is a boundary of $\Sigma_{g_i,n_i}$ and $A_i = \prod_{k=1}^{n_i}[a_{i_k},b_{i_k}]$. Let us give a uniform bound for $T_{g_i,n_i}(\bm{x},\mu_g)$.\\ 
By Proposition~\ref{estimate_boundary}, there exists a $g_0 \ge 0$ such that for any $g \ge g_0$, any $g' \in \{g_0,\cdots,g\}$, any $k \in\{0,\cdots,r\}$ and any $\mathbf{L}\in [0,b_n]^k$, we have 
\begin{align*}
    T_{g',k}\bigg(\bigg(-\frac{M_1}{3M_0}\bigg)^{\frac{1}{2}}\mathbf{L},\mu_g\bigg) \le 2e^{rb_n}T_{g',k}(\mu_g),
\end{align*}
The constant $g_0$ depends on $(\mu_g)$, $n$ and $b_n$.  \\
Now, we fix such a $g_0$. Using Proposition~\ref{crude_bound}, for $g$ large enough, for any $g' \in \{0,\cdots,g_0-1\}$, any $k \in\{0,\cdots,r\}$ and any $\mathbf{L}\in [0,b_n]^k$, we have
\begin{align*}
    T_{g',k}\bigg(\bigg(-\frac{M_1}{3M_0}\bigg)^{\frac{1}{2}}\mathbf{L},\mu_g\bigg) \le CT_{g',k}(\mu_g),
\end{align*}
where the constant $C > 0$ depends on $r,b_n$ and $(\mu_g)$. Now we can bound \eqref{bound_with_boundary} by 
\begin{align*}
   C\bigg(-\frac{M_1}{12M_0}\bigg)^{r}\frac{\prod_{i=1}^{q}T_{g_i,n_i}(\mu_g)}{ T_{g}(\mu_g)},
\end{align*}
where $C > 0$ is a positive constant which only depends on $r,b_n$ and $(\mu_g)$.\\
To conclude, observe that the sum \eqref{bound_separating_case} can be bounded by:
\begin{align*}
    I_{\mathrm{sep}} \leq C\somme{q=2}{r}{\somme{(g_i,n_i)}{}{(2r)!!}\displaystyle \frac{\prod_{i=1}^{q}T_{g_i,n_i}(\mu_g)}{M_0^{r}T_{g}(\mu_g)}}
\end{align*}
where $C > 0$ only depends on $r,b_n$ and $(\mu_g)$. The term $\displaystyle (2r)!!$ bounds the number of ways $\displaystyle \bigsqcup_{i=1}^{q} \Sigma_{g_i,n_i}$ can be glued to obtain $\Sigma_g$. We conclude using Proposition~\ref{bound_separating_multicurve}.
\end{proof}

Giving a bound on the expectation of $N^{\times}_{g,\mu_g,a_1,b_1,r_1,\dots,a_n,b_n,r_n}$ requires extra control on the number of tight simple closed geodesics of length smaller than $(-\frac{M_1}{12M_0})^{\frac{1}{2}}b_n$ and on the size of the tight systole. Unfortunately, no such estimate is known. Thus its computation is out of reach.

 For any fixed $L > 0$ and $g \ge 0$, let us introduce the event $\mathcal{A}_{g,L}$ on which there exists a pair of intersecting tight curves of length at most $(-\frac{M_1}{12M_0})^{\frac{1}{2}}L$ or a self-intersecting tight curve of length at most $(-\frac{M_1}{12M_0})^{\frac{1}{2}}L$. On the event $\mathcal{A}_{g,b_n}^{c}$, we have $N^{\times}_{g,\mu_g,a_1,b_1,r_1,\dots,a_n,b_n,r_n} = 0$.

In the next Proposition, we prove that $\mathcal{A}_{g,b_n}^{c}$ occurs with high probability.
\begin{proposition}\label{number_of_closed_geodesics_with_inter_or_non_simple_1}
     For any $\mu_g \underset{g \to +\infty}{\rightarrow}\mu_c$ such that $\mu_c - \mu_g = o(g^{-2})$, any $L\ge 0$, we have 
    \begin{align*}        \mathbb{P}(\mathcal{A}_{g,L}) \underset{g \to +\infty}{\overset{}{\longrightarrow}} 0.
    \end{align*}
\end{proposition}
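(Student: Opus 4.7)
The approach mirrors the proof of Mirzakhani and Petri~\cite{Mirzakhani_petri_2019}, combined with the tight-curve integration formula of Section~\ref{Integration_formula}. Set $L' = (-M_1/(12M_0))^{1/2}L$ and $\varepsilon' = (-M_1/(12M_0))^{1/2}\varepsilon$. Decompose $\mathcal{A}_{g,L}$ as the union of (i) the event that some primitive tight closed geodesic of length at most $L'$ has a self-intersection, and (ii) the event that two distinct primitive tight closed geodesics of length at most $L'$ cross each other. By Markov's inequality it suffices to bound the expected number of such configurations on $\{\hat{N}_{g,\mu_g,0,\varepsilon}=0\}$ and show it tends to zero as $g\to\infty$.

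In both cases I would associate a subsurface $S\subset\Sigma_{g,n+p}$ containing the offending configuration in its interior, namely the filling of a regular neighborhood of the geodesic(s). A standard topological argument as in \cite{Mirzakhani_petri_2019} shows that $S$ has bounded negative Euler characteristic (say $\chi(S)\geq -2$) and its geodesic boundary $\partial S$ consists of simple closed curves of length at most a universal multiple of $L'$. Passing to the tight representative of each component in the extended homotopy class of $\Sigma_{g,n}$, and discarding components that become null-homotopic there, one obtains a tight multi-curve $\Gamma' = (\gamma_1',\dots,\gamma_k')$ on $\Sigma_{g,n+p}$ whose tight-lengths $\vec{\ell}_X^{\,\mathrm{tight}}(\Gamma')$ lie in $[\varepsilon',CL']^k$ on the event $\{\hat{N}_{g,\mu_g,0,\varepsilon}=0\}$, for some absolute constant $C$.

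Applying the tight integration formula~\eqref{integration_formula_version_generating_function} with $f$ the indicator of $[\varepsilon',CL']^k$ and summing over the finitely many topological types of $(S,\Gamma')$ of bounded complexity, Proposition~\ref{estimate_boundary} bounds uniformly on the compact box each factor $T_{g_i,n_i}\bigl((-M_1/(3M_0))^{1/2}\mathbf{x}^{(i)},\mu_g\bigr)\leq C'\, T_{g_i,n_i}(\mu_g)$. The expected count thus reduces, up to a constant depending only on $L$ and $\varepsilon$, to
\begin{align*}
\frac{1}{M_0(\mu_g)^{k}\, T_g(\mu_g)} \sum_{(g_i,n_i)}\prod_{i}T_{g_i,n_i}(\mu_g),
\end{align*}
where $(g_i,n_i)$ indexes the topological types of the connected components of $\Sigma_g\setminus\Gamma'$. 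This is exactly the quantity controlled by Proposition~\ref{bound_separating_multicurve}, which produces a bound of order $g^{-(q-1)}$ with $q\geq 2$, hence at most $g^{-1}$.

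The main obstacle is the topological step: showing that every bad configuration arises from a subsurface $S$ of uniformly bounded Euler characteristic with geodesic boundary of length at most $CL'$, even in the presence of cusps inside $S$ or of multiple self-intersections of a single tight geodesic. A related subtlety is that a boundary component of $S$ may be null-homotopic in $\Sigma_{g,n}$ and must be discarded; one then has to verify that the remaining multi-curve $\Gamma'$ is still non-trivial and that the lower bound $\varepsilon'$ on the tight-lengths, coming from $\{\hat{N}_{g,\mu_g,0,\varepsilon}=0\}$, applies to all surviving components. The hypothesis on $\hat{N}_{g,\mu_g,0,\varepsilon}$ is used precisely to prevent boundary components with vanishingly small tight-length from invalidating the uniform volume estimate of Proposition~\ref{estimate_boundary}.
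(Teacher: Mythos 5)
The high-level strategy is right: reduce $\mathcal{A}_{g,L}$ to the existence of a short separating tight multi-curve, bound its expected count via the integration formula, and conclude with Proposition~\ref{bound_separating_multicurve}. But the crucial topological step is wrong, and the error is exactly where the cusp-rich regime differs from Mirzakhani--Petri.

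Your claim that ``a standard topological argument as in \cite{Mirzakhani_petri_2019} shows that $S$ has bounded negative Euler characteristic (say $\chi(S)\geq -2$)'' fails for two compounding reasons. First, the Mirzakhani--Petri collar-lemma bound on the number of intersections of short geodesics applies to geodesics of length bounded by a \emph{fixed} $L$; here the relevant lengths are of order $L' = (-M_1/(12M_0))^{1/2}L \to\infty$, so the collar width $\log\coth(L'/4)$ tends to zero and a pair of intersecting tight geodesics of length $\lesssim L'$ can have an unbounded number of intersection points. Consequently the ribbon graph $\Gamma$, and hence a regular neighbourhood of it, has genus $g'$ that can grow with $g$. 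Second, when one fills in the boundary components of the regular neighbourhood that bound disks with cusps (or cylinders with cusps), the resulting subsurface $S$ can contain arbitrarily many cusps, so its Euler characteristic is not controlled by any function of $L$ and $\varepsilon$. You flag both issues as ``the main obstacle'' and ``a related subtlety'' in your final paragraph, but you do not resolve them, so as written the argument does not close.

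The paper's proof deliberately avoids any Euler-characteristic bound. Instead it bounds only the \emph{number of boundary components} $r^*$ of the filled-in neighbourhood $\Gamma^*$: the boundary of the regular neighbourhood has total length at most $4L'$ (this is a length count along $\Gamma$, not a count of intersections), and on $\{\hat{N}_{g,\mu_g,0,\varepsilon}=0\}$ every boundary component that is nontrivial in the extended homotopy class has tight length at least $\varepsilon'$, so at most $C(\varepsilon,L)=4L/\varepsilon$ boundaries survive. The genus of $\Gamma^*$ is \emph{not} bounded; what is controlled is the pair (number of boundary curves $\le C(\varepsilon,L)$, genus $<g$), and the sum over all the unbounded genus decompositions of the complement is handled precisely by Proposition~\ref{bound_separating_multicurve}. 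Two further details your proposal glosses over but the paper has to establish: the ribbon-subgraph reduction to $g'<g/2$ ensures that $\Gamma^*\neq X$ so the resulting multi-curve is genuinely separating, and one must rule out $\Gamma^*$ being a cylinder or disk with cusps so that the separating multi-curve is nontrivial. Without the boundary-count argument replacing the Euler-characteristic argument, your ``finitely many topological types of bounded complexity'' is unjustified and the proof has a real gap.
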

\begin{proof}
Proposition~\ref{number_of_simple_tight_closed_geodesics} and a first moment method imply that
\begin{align*} 
    \limsup\limits_{g\to +\infty} \Pf(\hat{N}_{g,\mu_g,0,\varepsilon} >0) \le \lambda_{0,\varepsilon}.
\end{align*}
Using the fact that $\lambda_{0,\varepsilon} \underset{\varepsilon \to 0}{\to}0$, it is sufficient to prove that for any $\varepsilon > 0$ we have 
\begin{align*}
\mathbb{P}(\mathcal{A}_{g,L},\hat{N}_{g,\mu_g,0,\varepsilon} =0) \underset{g \to +\infty}{\overset{}{\longrightarrow}} 0.
\end{align*}

Let us fix $\varepsilon > 0$. On the event $\mathcal{A}_{g,L}$, we either fix a pair $\Gamma=\{\gamma_1,\gamma_2\}$ such that $\gamma_1$ and $\gamma_2$ intersect or a singleton $\Gamma=\{\gamma\}$ with $\gamma$ self-intersecting. Let us prove that for $C(\varepsilon,L) = \frac{4L}{\varepsilon}$ there exists a tight multicurve composed of at most $C(\varepsilon,L)$ disjoint curves with total length less than $8L(-\frac{M_1}{12M_0})^{\frac{1}{2}}$ that separates $X$ in at least two non-trivial parts.

We interpret $\Gamma$ as a ribbon graph in $X$ with vertices corresponding to the intersection points and edges to the segments of the curves between the intersection points. 
We denote its genus by $g'$. 
Without loss of generality we will assume $g' < g/2$, because if $g' \geq g/2$ we may replace $\Gamma$ by a connected ribbon subgraph of genus smaller than $g/2$ as follows.
When removing an edge from a ribbon graph of genus $h$, either the ribbon graph stays connected and has genus $h$ or $h-1$, or it disconnects into two ribbon graphs one of which has genus between $h/2$ and $h$.
Successive removal of edges therefore allows us to extract a subgraph of $\Gamma$ of genus $g'$ satisfying $1 \leq g' < g/2$.


Let $\Gamma^{'}$ be a regular neighbourhood of $\Gamma$ (or the subgraph just constructed) whose boundary is composed of $r'$ disjoint simple closed curves $\alpha_1,\cdots,\alpha_{r'}$. 
Writing $X \backslash \Gamma^{'} = \bigsqcup_{i=1}^{p}S_i$, we consider the set of indices $I$ such that $i \in I$ if and only if $S_i$ is a disk with cusps bounded by one of the $\alpha_j$ or a cylinder with cusps bounded by two distinct curves $\alpha_p$ and $\alpha_q$. Then we consider $\Gamma^{*} = \Gamma^{'} \cup \bigsqcup_{i\in I} S_i$ (see Figure~\ref{separating_subsurface}).
This is a subsurface of $X$ of genus $g^* \geq g'$ with $r^* \leq r'$ boundaries $\beta_1,\ldots,\beta_{r^*}$ satisfying the following properties:
\begin{itemize}
    \item The total length of the boundaries $\beta_1,\ldots,\beta_{r^*}$ is at most $4L(-\frac{M_1}{12M_0})^{\frac{1}{2}}$, which follows from $\Gamma$ having total length less at most $2L(-\frac{M_1}{12M_0})^{\frac{1}{2}}$.  
    \item $2(g^*-g') + r^* \leq C(\varepsilon,L)$. To see this, let $J$ be the set of indices $j$ such that $\alpha_j$ separates a disk with cusps from the rest of the surface. In that case $2(g^{*}-g^{'})+r^{*} = r'-|J|$. On $\{\hat{N}_{g,\mu_g,0,\varepsilon} = 0\}$, any closed curve that is not separating a genus $0$ part from the rest of the surface has length at least $(-\frac{M_1}{12M_0})^{\frac{1}{2}}\varepsilon$. Thus, for each $i \in \{1,\cdots,r'\}\backslash J$ the curve $\alpha_i$ has length at least $(-\frac{M_1}{12M_0})^{\frac{1}{2}}\varepsilon$. Since the total length of the $\alpha_i$ is bounded by $4L(-\frac{M_1}{12M_0})^{\frac{1}{2}}$, we deduce $r'-|J| \le \frac{4L}{\varepsilon} = C(\varepsilon,L)$.
    \item If $g > C(\varepsilon,L)$, then $\Gamma^* \neq X$. This follows from our assumption on $g'$ and the previous property, since $g > C(\varepsilon,L)$ implies $g^* \leq g' + C(\varepsilon,L)/2 < g/2 + C(\varepsilon,L)/2 < g$.
    \item $\Gamma^*$ is not homeomorpic to a cylinder with cusps or to a disk with cusps. This is immediate in the case a ribbon subgraph of $\Gamma$ was used, since then $g^* \geq g' \geq 1$ by construction. Otherwise, this follows from the fact that $\Gamma^* \supset \Gamma$ contains either a pair of distinct tight closed geodesics or a self-intersecting tight closed geodesic, while a cylinder with cusps has at most one primitive tight closed geodesic and a disk with cusps has none. 
\end{itemize}
\begin{figure}[H]
    \centering
    \includegraphics[scale = 0.22]{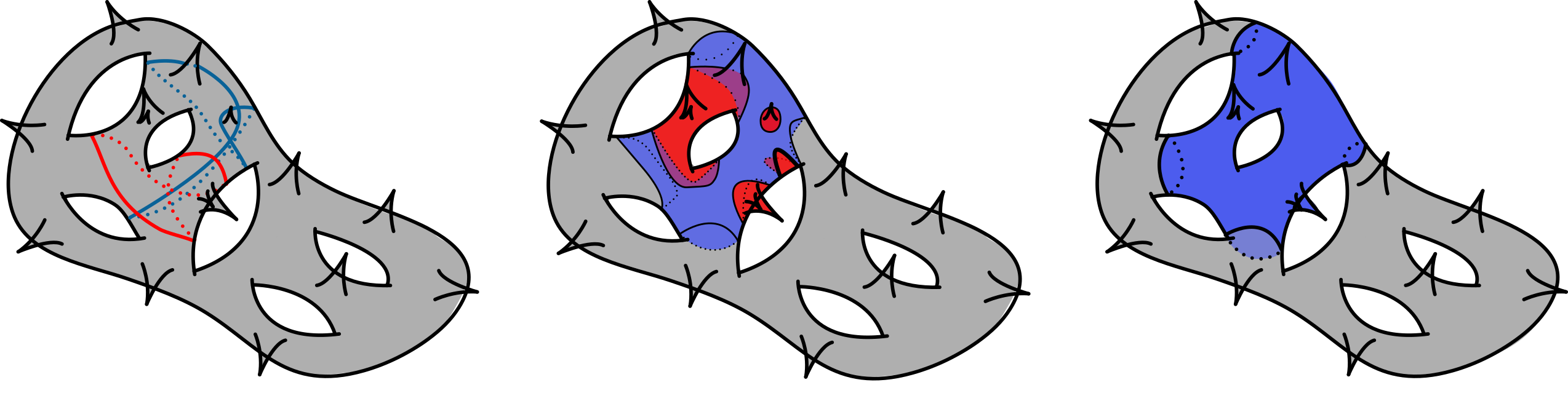}
    \caption{On the left, a multicurve $\Gamma= \{\gamma_1,\gamma_2\}$ with intersections. In the middle, we represented the regular neighbourhood $\Gamma^{'}$ in blue and $\Gamma^{*}$ obtained from $\Gamma^{'}$ by filling the red part. On the right, the blue part corresponds by the subsurface separated when taking the tight multicurve associated to the boundaries of $\Gamma^{*}$ and cutting along it. }
    \label{separating_subsurface}
\end{figure}
Applying Proposition~\ref{tighten_multicurve}, the tight multicurve associated to the boundaries of $\Gamma^{*}$ is composed of $r^{*}$ tight geodesics with length less than $4L(-\frac{M_1}{12M_0})^{\frac{1}{2}}$ that separates $X$ in at least two non-trivial parts. 

\noindent Let us write $\Bar{N}_{g,\mu_g,0,L,r}$ for the number of tuples $\Gamma$ composed of $r$ tight closed geodesics that are all pairwise disjoint and with length less than $(-\frac{M_1}{12M_0})^{\frac{1}{2}}L$ such that $[\Gamma] \neq [\Gamma_0]$, i.e.\ $\Gamma$ is separating. This combined with the Markov inequality gives
\begin{align*}
    \mathbb{P}\bigg(\mathcal{A}_{g,L},\hat{N}_{g\mu_g,0,\varepsilon} = 0\bigg) \le\somme{r=2}{C(\varepsilon,k)}{ \mathbb{P}\bigg(\Bar{N}_{g,\mu_g,0,8L,r} \ge 1\bigg)} \le\somme{r=2}{C(\varepsilon,k)}{ \mathbb{E}[\Bar{N}_{g,\mu_g,0,8L,r} ]}.
\end{align*}
This last term can be rewritten using \eqref{integration_formula_version_generating_function} as
\begin{align}\label{bound_sep}
    \somme{r=2}{C(\varepsilon,k)}{T_{g}(\mu_g)^{-1}\somme{[\Gamma] \ne [\Gamma_0]}{}{C_{\Gamma}\inte{[0,(-\frac{M_1}{12M_0})^{\frac{1}{2}}8L]^r}{}{T_{g}(\Gamma,x,\mu_g)x_1 \cdots x_r}{x_1\cdots dx_r}}}.
\end{align}
Following the proof of the bound of \eqref{I_sep} we conclude our proof.
\end{proof}
Thus, to avoid controlling the expectation of $N^{\times}_{g,\mu_g,a_1,b_1,r_1,\dots,a_n,b_n,r_n}$, one can simply work conditionally on the event $\mathcal{A}_{g,b_n}^{c}$. However, to apply the moment method, we need to control the expectation of $\hat{N}_{g,\mu_g,a_1,b_1,r_1,\dots,a_n,b_n,r_n}$ conditionally on $\mathcal{A}_{g,b_n}^{c}$.

\begin{proposition}\label{number_of_simple_closed_tight_geodesics_conditionally_AgL}
    For any $\mu_g \underset{g \to +\infty}{\rightarrow}\mu_c$ such that $\mu_c - \mu_g = o(g^{-2})$, any disjoint compact intervals $([a_i,b_i])_{1\le i \le n}$ and any $r_1,\cdots,r_n \ge 1$ we have
    \begin{align*}
        \mathbb{E}\bigg[\hat{N}_{g,\mu_g,a_1,b_1,r_1\dots,a_n,b_n,r_n}\indi{\mathcal{A}_{g,b_n}}\bigg] \underset{g \to +\infty}{\overset{}{\longrightarrow}} 0.
    \end{align*}
    In particular
    \begin{align*}
        \mathbb{E}\bigg[\hat{N}_{g,\mu_g,a_1,b_1,r_1\dots,a_n,b_n,r_n}\bigg|\mathcal{A}_{g,b_n}^{c}\bigg] \underset{g \to +\infty}{\overset{}{\longrightarrow}} \lambda_{a_1,b_1}^{r_1}\cdots\lambda_{a_n,b_n}^{r_n}.
    \end{align*}
\end{proposition}
\begin{proof}
Fix $\varepsilon > 0$ and observe that 
\begin{align}
    \mathbb{E}\bigg[\hat{N}_{g,\mu_g,a_1,b_1,r_1\dots,a_n,b_n,r_n}\indi{\mathcal{A}_{g,b_n}}\bigg]     &\le \mathbb{E}\bigg[\hat{N}_{g,\mu_g,a_1,b_1,r_1\dots,a_n,b_n,r_n}\indi{\hat{N}_{g,\mu_g,0,\varepsilon} > 0}\bigg]  \nonumber\\&+ \mathbb{E}\bigg[\hat{N}_{g,\mu_g,a_1,b_1,r_1\dots,a_n,b_n,r_n}\indi{\mathcal{A}_{g,b_n}}\indi{\hat{N}_{g,\mu_g,0,\varepsilon} = 0}\bigg].
\end{align}
We start bounding the first term. 
Recalling the notation $r= r_1+\cdots+r_n$, we claim that we have the inequality 
\begin{align}\label{ineq1}
    \hat{N}_{g,\mu_g,a_1,b_1,r_1\dots,a_n,b_n,r_n}\indi{\hat{N}_{g,\mu_g,0,\varepsilon} > 0} \le \hat{N}_{g,\mu_g,a_1,b_1,r_1\dots,a_n,b_n,r_n,0,\varepsilon,1} +\somme{k=r}{r+3}{\bar{N}_{g,\mu_g,0,2b_n+\varepsilon,k}}.
\end{align}
Indeed, fix $X \in \mathcal{M}_{g,n}$ and $\Gamma= (\Gamma_1,\cdots,\Gamma_n)$ an ordered list where $\Gamma_i  = (\gamma_{i,1},\cdots,\gamma_{i,r_i})$ is an ordered list of disjoint simple tight closed geodesics with 
\begin{align*}
\ell(\gamma_{i,j}) \in \left[\left(-\frac{M_1}{12M_0}\right)^{\frac{1}{2}}a_i,\left(-\frac{M_1}{12M_0}\right)^{\frac{1}{2}}b_i\right]
\end{align*}
and for $i\neq j$ we have $[\Gamma_i]\cap [\Gamma_j] = \emptyset$. We also suppose that $\hat{N}_{g,\mu_g,0,\varepsilon}(X) > 0$. Let us fix the tight closed systole of $X$ and call it $\gamma$. Then the closed curve $\gamma$ is simple and $\ell(\gamma) \le \varepsilon\left(-\frac{M_1}{12M_0}\right)^{1/2}$.
We distinguish two cases:
\begin{itemize}
    \item[$\bullet$] If $\gamma$ is disjoint from $\Gamma$, we define $\varphi(X,\Gamma) = (X,\Gamma,\gamma)$.
    \item[$\bullet$]  Otherwise, if $\gamma$ intersects $\Gamma$, there exists a segment $I$ of $\gamma$ that intersects at the endpoints two distinct closed curves $\gamma_1,\gamma_2\in \Gamma$ or a closed curve $\gamma_1 \in \Gamma$. Let us call $Y = I\cup\gamma_1 \cup \gamma_2$ if we are in the first case and $Y = I \cup \gamma_1$ in the second case. In both cases, a small neighbourhood of $Y$ is a subsurface of $X$ that is either a one-holed torus, or a pair of pants. Moreover, the length of the boundaries are at most $(2b_n+\varepsilon)\left(-\frac{M_1}{12M_0}\right)^{1/2}$. We define $\Gamma'$ the boundaries of $Y$ that are not separating a disk with cusps in $X$ and not in the same extended homotopy class as any curve in $\Gamma$. In this case we write $\varphi(X,\Gamma) = (X,\Gamma,\Gamma')$. The multicurve $(\Gamma,\Gamma')$ is then a separating multicurve with all lengths at most $(2b_n+\varepsilon)\left(-\frac{M_1}{12M_0}\right)^{1/2}$. It consists of at least $r$ curves and at most $r+3$.
\end{itemize}
Since the function $\varphi$ is injective, \eqref{ineq1} follows.

For the second term we claim the inequality 
\begin{align}\label{ineq2}
\hat{N}_{g,\mu_g,a_1,b_1,r_1\dots,a_n,b_n,r_n}\indi{\mathcal{A}_{g,b_n}}\indi{\hat{N}_{g,\mu_g,0,\varepsilon} = 0}\le \somme{k=r}{r+C(\varepsilon,b_n,r)}{\bar{N}_{g,\mu_g,0,b_n(r+2),k}},
\end{align}
where $C(\varepsilon,b_n,r):=2\frac{b_n(r+2)}{\varepsilon}$.\\

Indeed, suppose $X$ satisfies $\mathcal{A}_{g,b_n}$ and $\hat{N}_{g,\mu_g,0,\varepsilon}=0$. Fix $\Gamma$ as above and let $\gamma,\beta$ be two tight closed geodesics of length less than $\left(-\frac{M_1}{12M_0}\right)^{1/2}b_n$ that intersect (or a single self-intersecting geodesic $\gamma$, for which the reasoning is the same and we let the careful reader check the details). As in the proof of Proposition~\ref{number_of_closed_geodesics_with_inter_or_non_simple_1}, let us write $\Gamma^{\times}$ for the set of curves in $\Gamma$ that intersect $\gamma$ or $\beta$. Note that we might have $\Gamma^{\times} = \emptyset$.

We consider the ribbon graph $G$ made of $\gamma,\beta$ and the curves in $\Gamma^{\times}$, where the edges of $G$ correspond to segments of closed curve between two intersections and the vertices correspond to the intersections. If $G$ has genus at most $g/2$ we define $G'=G$.
Otherwise, as in the proof of Proposition~\ref{number_of_closed_geodesics_with_inter_or_non_simple_1}, if $G$ has genus larger than $g/2$, then we can extract a connected ribbon subgraph $G' \subset G$ with genus $1 \le g' \le g/2$. Moreover, such a subgraph $G'$ can be obtained by successive removal of edges contained only in $\gamma$ or $\beta$. Indeed the curves in $\Gamma^{\times}$ are simple and do not intersect, thus if all the edges contained in $\gamma$ or $\beta$ are removed, there is no intersection anymore.

The faces of $G'$ define closed curves on $X$. The number of faces that are not filled by a disk with cusps is bounded by $C(\varepsilon,b_n,r):=2\frac{b_n(r+2)}{\varepsilon}$. Consider an arbitrary ordering $(\gamma_{r+1},\cdots,\gamma_{r+p})$ of these closed curves where we take only one copy in each extended homotopy class and omit those that share the homotopy class with a curve in $\Gamma$. Thus we have $p \le C(\varepsilon,b_n,r)$. We write $\varphi(X,\Gamma) = (X,\Gamma,\gamma_{r+1},\cdots,\gamma_{r+p})$. By construction of $(\gamma_{r+1},\cdots,\gamma_{r+p})$, the multicurve $(\Gamma,\gamma_{r+1},\cdots,\gamma_{r+p}) $ is made of disjoint simple closed geodesics and is separating. The function $\varphi$ is injective. This concludes \eqref{ineq2}. 
 
 Combining \eqref{ineq1},\eqref{ineq2}, Proposition~\ref{number_of_simple_tight_closed_geodesics} and \eqref{bound_sep}, we obtain
 \begin{align*}
     \limsup\limits_{g\to+\infty}\mathbb{E}\bigg[\hat{N}_{g,\mu_g,a_1,b_1,r_1\dots,a_n,b_n,r_n}\indi{\mathcal{A}_{g,b_n}}\bigg] \le \lambda_{a_1,b_1}^{r_1}\cdots\lambda_{a_n,b_n}^{r_n}\lambda_{0,\varepsilon}.
 \end{align*}
 Letting $\varepsilon \to 0$ concludes the proof.
\end{proof}

\subsection{Proof of Theorem~\ref{main_theorem}}

Using Proposition~\ref{number_of_closed_geodesics_with_inter_or_non_simple_1}, we only need to prove the distribution convergence of $\left(N^{\mathrm{tight}}_{g,\mu_g,a_i,b_i}\right)_{1\le i \le r}$ conditionally on $\mathcal{A}_{g,L}^{c}$ where $L=b_n$. On $\mathcal{A}_{g,L}^{c}$, for any $r_1,\cdots,r_n\ge 1$ we have 
\begin{center}
$(N^{\mathrm{tight}}_{g,\mu_g,a_1,b_1})_{r_1}\cdots(N^{\mathrm{tight}}_{g,\mu_g,a_n,b_n})_{r_n} = \hat{N}_{g,\mu_g,a_1,b_1,r_1\dots,a_n,b_n,r_n}$. 
\end{center}
Then the proof follows from the moment method using Proposition~\ref{number_of_simple_closed_tight_geodesics_conditionally_AgL}.
\qed

\subsection{Proofs of corollaries}

\begin{proof}[Proof of Corollary~\ref{main_corollary}]
Fix $(\mu_g)_{g \ge 0}$ such that $\mu_c - \mu_g \underset{}{\sim}\displaystyle \frac{5g}{2 n_g}\mu_c$. We have $\mu_c - \mu_g = o(g^{-2})$. Thus we can apply Proposition~\ref{number_of_cusps_mean_and_concentration} and Proposition~\ref{Concentration_result} and obtain  
\begin{align*}
    \mathbb{E}[\mathcal{N}_{g,\mu_g}] \underset{g \to +\infty}{\sim} n_g \text{ and }\frac{\mathcal{N}_{g,\mu_g}}{n_g} \underset{g \to +\infty}{\overset{(\mathbb{P})}{\to}}1.
\end{align*}
Recall the constants $\alpha_1 = \sqrt{\frac{6}{\pi}\sqrt{\frac{2j_0}{J_1(j_0)}}}$ and $\alpha_2 = \frac{1}{\pi}\sqrt{3j_0\sqrt{5}}$. We find with our choice of $(\mu_g)_{g \ge 0}$:
\begin{align*}
    \alpha_1 (\mu_c-\mu_g)^{\frac{1}{4}} \underset{g \to +\infty}{\sim} \alpha_2\bigg(\frac{n_g}{g}\bigg)^{-\frac{1}{4}}.
\end{align*}
Combining this asymptotic equivalence with Theorem~\ref{main_theorem}, we conclude the proof.
\end{proof}
\noindent
Finally, we deduce the laws of the non-separating and tight systoles.
\begin{proof}[Proof of Corollary~\ref{systole}]
    Fix $(n_g)_{g \ge 0}$ such that $\displaystyle \frac{n_g}{g^3} \underset{g \to +\infty}{\to}+\infty$. The choice of 
    $\mu_g$ is as in last proof. Observe that $\Pf^{\mathrm{WP}}_{g,\mu_g}$-almost surely $\ell^{\mathrm{tight}}_{sys} \le \ell^{\,\mathrm{ns}}_{\mathrm{sys}}$. Indeed, $\Pf^{\mathrm{WP}}_{g,\mu_g}$-almost surely, to any non-separating geodesic $\gamma$ on $X$, we can associate a unique tight geodesic $\gamma^{'}$ in its extended homotopy class. By definition it gives the desired bound. On the event $\{N^{\times}_{g,\mu_g,0,t} = 0\} \cap \{N_{g,\mu_g,0,t} \neq 0\}$, we have $\ell^{\mathrm{tight}}_{sys} = \ell^{\,\mathrm{ns}}_{\mathrm{sys}}$. Thus we only have to bound
    \begin{align}
        \Pf\bigg(\{N^{\times}_{g,\mu_g,0,t} \ge 1\} \cup \{N_{g,\mu_g,0,t} = 0\}\bigg).
    \end{align}
Combining Proposition~\ref{number_of_closed_geodesics_with_inter_or_non_simple_1} and Theorem~\ref{main_theorem}, we deduce that for any $t \ge 0$:
    \begin{align}
        \limsup_{g} \Pf\bigg(\{N^{\times}_{g,\mu_g,0,t} \ge 1\} \cup \{N_{g,\mu_g,0,t} = 0\}\bigg) \le \exp(-\lambda_{0,t}).
    \end{align}
    It follows that
    \begin{align*}
         \limsup_{g} \Pf^{\mathrm{WP}}_{g,\mu_g}\bigg(\ell^{\mathrm{tight}}_{sys} \neq \ell^{\,\mathrm{ns}}_{\mathrm{sys}}\bigg) \le \exp(-\lambda_{0,t}).
    \end{align*}
    Letting $t \to +\infty$, we deduce
    \begin{align*}
         \lim_{g \to +\infty}\Pf^{\mathrm{WP}}_{g,\mu_g}\bigg(\ell^{\mathrm{tight}}_{sys} \neq \ell^{\,\mathrm{ns}}_{\mathrm{sys}}\bigg) = 0.
    \end{align*}
Combining this and Corollary~\ref{main_corollary} completes the proof for the distribution convergence.
\end{proof}

\printbibliography
\end{document}